\tikzset{->-/.style={decoration={  markings,  mark=at position #1 with
    {\arrow{>}}},postaction={decorate}}}
\tikzset{-<-/.style={decoration={  markings,  mark=at position #1 with
    {\arrow{<}}},postaction={decorate}}}
\theoremstyle{plain}
\newtheorem{theorem}{Theorem}[section]
\newtheorem*{thm}{Theorem}
\newtheorem{lemma}[theorem]{Lemma}
\newtheorem{corollary}[theorem]{Corollary}
\newtheorem{proposition}[theorem]{Proposition}
\theoremstyle{definition}
\newtheorem{definition}[theorem]{Definition}
\newtheorem{remark}[theorem]{Remark}
\numberwithin{equation}{section}
\newtheorem{definition-proposition}[theorem]{Definition-Proposition}
\newtheorem{construction}[theorem]{Construction}
\newtheorem{assumption}[theorem]{Assumption}
\def\surfo{\mathbf{S}_\Tri}
\def\T{\mathbf{T}}
\def\D{\mathcal{D}}
\providecommand{\OA}{\mathop{\rm OA}\nolimits}%
\providecommand{\CA}{\operatorname{CA}}%
\providecommand{\GCA}{\overline{\CA}}%
\providecommand{\Hom}{\mathop{\rm Hom}\nolimits}%
\providecommand{\Int}{\mathop{\rm Int}\nolimits}%
\providecommand{\Ext}{\mathop{\rm Ext}\nolimits}%
\providecommand{\deg}{\mathop{\rm deg}\nolimits}%
\providecommand{\per}{\mathop{\rm per}\nolimits}%
\def\hua{\mathcal}
\def\kong{\mathbb}
\def\<{\langle}
\def\>{\rangle}
\def\hom{\hua{H}om}
\def\ZZ{\mathbb{Z}}
\def\Aut{\operatorname{Aut}}
\def\Sim{\operatorname{Sim}}
\def\Hom{\operatorname{Hom}}
\def\Ext{\operatorname{Ext}}
\def\Stap{\operatorname{Stab}^\circ}
\def\deg{\operatorname{deg}}
\newcommand{\h}{\operatorname{\hua{H}}}            
\renewcommand{\k}{\mathbf{k}}
\newcommand{\tilt}[3]{{#1}^{#2}_{#3}}
\newcommand{\Cone}{\operatorname{Cone}}
\def\numbers{\begin{enumerate}[label=\arabic*{$^\circ$}.]}
\def\ends{\end{enumerate}}
\newcommand{\id}{\operatorname{id}}
\newcommand{\EG}{\operatorname{EG}}       
\newcommand{\EGp}{\operatorname{EG}^\circ}       
\newcommand{\SEGp}{\operatorname{SEG}^\circ}       
\newcommand{\C}{\hua{C}}
\newcommand\Sph{\operatorname{Sph}}
\def\zero{\hua{H}_\Gamma}
\newcommand{\Tri}{\bigtriangleup}
\def\arrow{red}
\def\surf{\mathbf{S}}                       
\def\disc{\mathbf{D}}
\newcommand{\ST}{\operatorname{ST}}        
\newcommand{\BT}{\operatorname{BT}}        
\newcommand{\MCG}{\operatorname{MCG}}
\def\Fuk{\operatorname{Fuk}}
\def\M{\mathbf{M}}
\def\P{\mathcal{P}}
\def\RHom{\operatorname{RHom}}
\def\ee{\operatorname{\mathfrak{E}}}
\def\EE{\operatorname{\hua{E}}}
\def\OAp{\operatorname{OA}^\circ}
\def\RR{\operatorname{RR}}
\newcommand\Bt[1]{\operatorname{B}_{#1}}
\newcommand\bt[1]{\operatorname{B}_{#1}^{-1}}
\def\ff{\operatorname{\mathrel{\Big|}}}
\newcommand{\Quad}{\operatorname{Quad}}
\begin{document}
\title[Decorated marked surfaces II]{Decorated marked surfaces II:
Intersection numbers and dimensions of Homs}

\author[Qiu]{Yu Qiu}
\address{YQ:
Department of Mathematics,
Chinese University of Hong Kong,
Shatin,
N.T.,
Hong Kong}
\email{yu.qiu@bath.edu}

\author[Zhou]{Yu Zhou}
\address{YZ:
Institutt for matematiske fag,
Norwegian University of Science and Technology,
N-7491,
Trondheim,
Norway}
\email{yuzhoumath@gmail.com}

\thanks{The work was supported by the Research Council of Norway, grant No.NFR:231000.}
\begin{abstract}
    We study derived categories arising from quivers with potential
    associated to a decorated marked surface $\surfo$, in the sense of \cite{QQ}.
    We prove two conjectures in \cite{QQ}, that
    under a bijection between certain objects in these categories and
    certain arcs in $\surfo$,
    the dimensions of morphisms between these objects equal
    the intersection numbers between the corresponding arcs.

\end{abstract}

\keywords{intersection numbers, spherical twist, Calabi-Yau categories}

\maketitle
 \section{Introduction}
\subsection{The 3-Calabi-Yau categories from surfaces}
In this paper, we study a class of derived categories $\D_{fd}(\surf)$
associated to quivers with potential from triangulated marked surfaces $\surf$.
They are 3-Calabi-Yau and originally arose in the study of homological mirror symmetry.
In type $A_n$ (or equivalently, $\surf$ an $(n+3)$-gon),
such a category was first studied by Khovanov-Seidel-Thomas \cite{KS,ST}.
They showed that there is a faithful (classical) braid group action
\[\ST\D_{fd}(\surf)\cong B_{n+1}=\MCG(\mathbf{S},n)\]
on $\D_{fd}(\surf)$,
where $\ST\D_{fd}(\surf)$ is the spherical twist group of $\D_{fd}(\surf)$ and $\MCG(\surf,n)$ the mapping class group of the disk with $n$ decorations.
This plays a crucial role in understanding such categories and
their spaces of stability conditions.
More recently,
Bridgeland-Smith \cite{BS}
established a connection between dynamical system of $\surf$
and theory of stability conditions on $\D_{fd}(\surf)$.
More precisely, they showed that
\[
    \Stap\D_{fd}(\surf)/\ST\D_{fd}(\surf)\cong\Quad(\surf),
\]
where $\Stap\D_{fd}(\surf)$ is the space of stability conditions and $\Quad(\surf)$
the moduli space of quadratic differentials.
Moreover, Smith \cite{S} showed that there is a fully faithful embedding
\[
    \D_{fd}(\surf)\hookrightarrow\D\Fuk(\mathbf{X}),
\]
where $\mathbf{X}$ is a symplectic 3-fold constructed from $\surf$
and $\D\Fuk(\mathbf{X})$ its derived Fukaya category.
This generalizes the sympelctic construction of \cite{KS}.
In the attempt of showing that $\Stap\D_{fd}(\surf)$ is the universal cover of $\Quad(\surf)$,
\cite{QQ} generalizes a result of \cite{KS} that
$\ST\D_{fd}(\surf)$ can be identified with a subgroup of the mapping class group
of $\surfo$, the decorated version of $\surf$:
\[\ST\D_{fd}(\surf)\cong\BT(\surfo)\subset\MCG(\surfo).\]

Under this embedding, the Homs between objects in $\D_{fd}(\surf)$
are in fact floer homology between the corresponding Langrangian submanifolds.
A (double graded) formula for calculating the dimension of Homs/floer homology
(in type $A$ or $\surf$ a polygon) is given in \cite{KS}.
Naturally, one would expect the corresponding formula for unpunctured $\surf$,
which is conjectured in \cite{QQ} (an ungraded version).
The main motivation of this paper is to prove such a conjecture
and another closely related formula.

\subsection{Topological aspect of cluster theory}
Cluster algebras and quiver mutation were introduced by Fomin-Zelevinsky \cite{FZ}.
Derksen-Weyman-Zelevinsky \cite{DWZ} further developed quiver mutation to mutation of quivers with potential. During the last decade,
the cluster phenomenon was spotted in various areas in mathematics, as well as in physics,
including geometric topology and representation theory.

On one hand, the geometric aspect of cluster theory was explored by
Fomin-Shapiro-Thurston \cite{FST}.
They constructed a quiver $Q_\T$ (and later Labardini-Fragoso \cite{LF} gave a corresponding potential $W_\T$)
from any (tagged) triangulation $\T$ of a marked surface $\surf$.
Moreover, they showed that mutation of quivers (with potential)
is compatible with flip of triangulations.
On the other hand, the categorification of cluster algebras
leads to representations of quivers,
due to Buan-Marsh-Reineke-Reiten-Todorov \cite{BMRRT}.
Later, Amiot \cite{A} introduced generalized cluster categories via Ginzburg dg algebras associated to quivers with potential, that
$\D_{fd}(\surf)$ fits into the following short exact sequence of triangulated categories:
\begin{gather}
    0 \to \D_{fd}(\surf) \to \per\surf  \to \C(\surf) \to 0
\end{gather}
where $\C(\surf)$ is the generalized cluster category.

Several works have been done concerning these categories associated to marked surfaces.
Namely, the following correspondences were established
(cf. Table~\ref{table}):
\begin{itemize}
  \item In the unpunctured case,
  Br\"{u}stle-Zhang \cite{BZ} constructed a bijection
  between the set of open arcs on $\surf$ and the set of indecomposables in the cluster category $\C(\surf)$.
  \item Qiu-Zhou \cite{QZ} constructed such a bijection in the general case
  (i.e. with punctures).
  \item Qiu \cite{QQ} constructed a bijection between the set of (simple) closed arcs on
  $\surfo$ (the decorated version of $\surf$) and the set of shift orbits of
  (reachable) spherical objects in $\D_{fd}(\surf)$.
  \item Qiu \cite{QQ2} constructed a bijection between the set of (simple) open arcs on
  $\surfo$ and the set (reachable) rigid indecomposable objects in $\D_{fd}(\surf)$.
\end{itemize}

\begin{table}[ht]
\caption{Topological model for categories associated to quivers with potential}
\label{table}
\setlength{\extrarowheight}{2pt}
\begin{tabular}{|c|l|c|}\hline
Topological model&Correspondence&Categories\\[1ex]
\hline
$\surfo$&& $\D_{fd}(\surf)$: 3-CY\\
Close arcs& $\widetilde{X}\colon\eta\xrightarrow{\;\text{\cite{QQ}}\;} X_\eta[\kong{Z}]\qquad$ &reachable spherical objects
\\\hline
$\surfo$&&$\per\surf$\\
Open arcs& $\widetilde{M}\colon\gamma\xrightarrow{\;\text{\cite{QQ2}}\;} \widetilde{M}_\gamma\qquad$ &reachable ind. objects
\\\hline
$\surf$&&$\C(\surf)$: 3-CY\\
Open arcs& $M\colon\mu\xrightarrow{\text{\cite{BZ,QZ}}} M_{\mu}\qquad$ &ind. objects
\\\hline
\end{tabular}
\end{table}

Furthermore, there are several the $\Int=\dim\Hom$ type formulae under this type of correspondences:
\numbers
  \item Khovanov-Seidel \cite{KS} showed
  $\dim\Hom^\ZZ(\widetilde{X}_{\eta_1},\widetilde{X}_{\eta_2})=2\Int(\eta_1,\eta_2)$
  in the case of $\surf$ of type $A$.
  \item Gadbled-Thiel-Wagner \cite{GTW} showed that the formula in $1^\circ$ holds for $\surf$ of extended affine type $A$.
  \item Zhang-Zhu-Zhou \cite{ZhZZ} showed that
  $\dim\Hom^1(M_{\gamma_1},M_{\gamma_2})=\Int(\gamma_1,\gamma_2)$
  for $\surf$ unpunctured.
  \item Qiu-Zhou \cite{QZ} showed that the formula in $3^\circ$ holds for the punctured case.
\ends
Here $\Hom^\ZZ(-,-)$ denotes $\oplus_{m\in\mathbb{Z}}\Hom(-,-[m])$,
which can be defined for shift orbits of objects.
Note that the differences between $\Hom^\ZZ$ and $\Hom^1$ reflect
the different properties that $\D_{fd}(\surf)$ is 3-Calabi-Yau
while $\C(\surf)$ is 2-Calabi-Yau.
The main technics for proving these $\Int=\dim\Hom$ type formulae
are the string model (or its variation/generalization) in representation theory.
In this paper, we establish some frameworks of graded string model for
certain 3-Calabi-Yau categories and
prove the following two formulae (Theorem~\ref{thm:1},\ref{thm:2}) of this type.

\begin{thm}\cite[Conjecture~10.5,10.6]{QQ}
Under the correspondence $\widetilde{M},\widetilde{X}$ in Table~\ref{table},
the following formulae hold:
\begin{gather*}
\dim\Hom^\ZZ(\widetilde{X}_{\eta_1},\widetilde{X}_{\eta_2})=2\Int(\eta_1,\eta_2),\\
\dim\Hom^\ZZ(\widetilde{M}_\gamma,\widetilde{X}_{\eta})=\Int(\gamma,\eta).
\end{gather*}
\end{thm}

\subsection{Context}
The paper is organized as follows.
In Section~\ref{Sec:2}, we review background materials.
In Section~\ref{Sec:3}, we prove a first formula for spherical objects under Assumption~\ref{ass}.
In Section~\ref{Sec:4}, we show that one can identify all sets of reachable spherical objects
from different triangulations in a canonical way.
This enables us to generalize the first formula to all cases and we also prove
a second formula as a byproduct.
In Appendix~\ref{app:A}, we develop the graded string model,
which is independent with the rest of the paper.
The key result here is the calculations of a type of
morphisms between (spherical) objects and the compositions of these morphisms.
This appendix also serves as a technical section for the prequel \cite{QQ}.
\subsection*{Acknowledgements}
The authors would like to thank Alastair King, Tom Bridgeland, Ivan Smith and Dong Yang for
interesting discussions. The second author would like to thank Henning Krause
and his research group for a pleasant atmosphere
when he was a postdoc at the Faculty of Mathematics of Bielefeld University. We would like to thank the anonymous referee for many concrete suggestions for improving the paper.

\section{Preliminaries}\label{Sec:2}

\subsection{Triangulated 3-Calabi-Yau categories and spherical twists}

Fix an algebraically closed filed $\k$ and all categories are $\k$-linear. A triangulated category $\mathcal{T}$ is called 3-Calabi-Yau if for any objects $X,Y\in\mathcal{T}$, there is a functorial isomorphism
\[\Hom_{\mathcal{T}}(X,Y)\cong D\Hom_{\mathcal{T}}(Y,X[3])\]
where $D=\Hom_\k(-,\k)$ the $\k$-duality. An (indecomposable) object $S$ in a triangulated 3-Calabi-Yau category $\mathcal{T}$ is called ($3$-)spherical if $\Hom_{\mathcal{T}}(S,S[n])$ equals $\k$ if $n=0$ or $3$ and equals zero otherwise. Recall from \cite{ST} that
the twist functor of a spherical object $S$ is defined by
\[\phi_S(X)=\Cone\left(S\otimes\Hom^\ZZ_\mathcal{T}(S,X)\rightarrow X\right)\]
with inverse
\[\phi^{-1}_S(X)=\Cone\left(X\rightarrow S\otimes\Hom^\ZZ_\mathcal{T}(X,S)\right)[-1].\]

\subsection{Quivers with potential, Ginzburg dg algebras and associated categories}\label{subsec:dgquiver}

A quiver with potential \cite{DWZ} is a pair $(Q,W)$,
where $Q$ is a finite quiver and $W$ is a linear combination of cycles in $Q$. The Ginzburg dg algebra \cite{G} $\Gamma=\Gamma(Q,W)$ associated to $(Q,W)$ is defined as follows. Let $\overline{Q}$ be the graded quiver with the same set of vertices as $Q$ and whose arrows are:
\begin{itemize}
    \item the arrows of $Q$ with degree 0,
    \item an arrow $a^\ast:j\rightarrow i$ with degree $-1$ for each arrow $a:i\rightarrow j$ of $Q$,
    \item a loop $t_i:i\rightarrow i$ with degree $-2$ for each vertex $i$ of $Q$.
\end{itemize}
The underlying graded algebra of $\Gamma$ is the completion of the graded path algebra $\k\overline{Q}$ and the differential of $\Gamma$ is determined uniquely by the following
\begin{itemize}
    \item $d(a)=0$ and $d(a^\ast)=\partial_aW$, for $a$ an arrow of $Q$,
    \item $\sum_{i\in Q_0} d(t_i)=\sum_{a\in Q_1}[a,a^*]$.
\end{itemize}
Let $\D(\Gamma)$ be the derived category of $\Gamma$. We consider the following full subcategories of $\D(\Gamma)$:
\begin{itemize}
  \item $\per\Gamma$: the perfect derived category of $\Gamma$,
  \item $\D_{fd}(\Gamma)$: the finite dimensional derived category of $\Gamma$.
\end{itemize}
It is known that $\per\Gamma$ is Krull-Schmidt \cite{KY} and $\D_{fd}(\Gamma)$ is 3-Calabi-Yau \cite{Ke}.
Let $\zero$ be the canonical heart of $\D_{fd}(\Gamma)$ and $\Sim\zero$ be the set of its simples.
As in \cite{QQ}, we use the following notations:
\begin{itemize}
  \item $\ST(\Gamma)$: the spherical twist group, which is the subgroup of $\Aut\D_{fd}(\Gamma)$ generated by $\phi_{S}$ for $S\in\Sim\zero$,
  \item $\Sph(\Gamma)$: the set of reachable spherical objects in $\D_{fd}(\Gamma)$,
  i.e. $\ST(\Gamma)\cdot\Sim\zero$.
\end{itemize}

\subsection{Triangulations of marked surfaces}

An (unpunctured) marked surface $\surf$ is an oriented compact surface with a finite set $\M$ of marked points lying on its non-empty boundary $\partial\surf$ \cite{FST}. Up to homeomorphism, a marked surface $\surf$ is determined by the following data:
\begin{itemize}
  \item the genus $g$ of $\surf$;
  \item the number $b$ of components of $\partial\surf$;
  \item the partition of the number $m=|\M|$
  describing the numbers of marked points on components of $\partial\surf$.
\end{itemize}
An (open) arc $\gamma$ in $\surf$ is a curve in the surface satisfying
\begin{itemize}
  \item the endpoints of $\gamma$ are in $\M$;
  \item except for its endpoints, $\gamma$ is disjoint from $\partial\surf$;
  \item $\gamma$ has no self-intersections in $\surf-\M$;
  \item $\gamma$ is not isotopic to a point or a boundary segment.
\end{itemize}
The arcs are always considered up to isotopy. A triangulation $\mathbb{T}$ of $\surf$ is a maximal collection of arcs in $\surf$ which do not intersect each other in the interior of $\surf$. We have
\[n:=|\mathbb{T}|=6g+3b+m-6,\]
and the number $\aleph$ of the triangles in $\mathbb{T}$ is
$(2n+m)/3$.

There is a quiver with potential $(Q_\mathbb{T},W_\mathbb{T})$ \cite{FST,LF},
associated to each triangulation $\mathbb{T}$ of $\surf$ as follows:
\begin{itemize}
  \item the vertices of $Q_\mathbb{T}$ are indexed by the arcs in $\mathbb{T}$;
  \item there is an arrow $i\to j$ whenever $i$ and $j$ are edges of the same triangle and $j$ follows $i$ clockwise, hence each triangle with three edges in $\mathbb{T}$ gives a 3-cycle (up to cyclic permutation);
  \item the potential $W_\mathbb{T}$ is the sum of the such 3-cycles.
\end{itemize}

\subsection{Decorated marked surfaces}
A decorated marked surface $\surfo$ is a marked surface with an extra set $\bigtriangleup$ of $\aleph$ decorating points in the interior of $\surf$.
A general closed arc $\eta$ in $\surfo$ is a curve in $\surf$ such that
\begin{itemize}
  \item its endpoints are in $\triangle$;
  \item except for its endpoints, $\gamma$ is disjoint from $\bigtriangleup$ and from $\partial\surf$;
  \item it is not isotopic to a point.
\end{itemize}
A closed arc in $\surfo$ is a general closed arc whose endpoints do not coincide.
An open arc $\gamma$ in $\surfo$ is a curve in $\surf$ such that
\begin{itemize}
  \item its endpoints are in $\M$;
  \item except for its endpoints, $\gamma$ is disjoint from $\bigtriangleup$ and from $\partial\surf$;
  \item it is not isotopic to a point or a boundary component.
\end{itemize}
We denote by $\CA(\surfo)$, $\GCA(\surfo)$ and $\OA(\surfo)$
the set of simple closed, simple general closed and simple open arcs in $\surfo$, respectively. Recall from \cite[\S~3.1]{QQ} the notion of intersection numbers as follows.
\begin{itemize}
  \item For an open arc $\gamma$ and an (open or general closed) arc $\eta$, their intersection number is defined as the geometric intersection number in $\surf_\triangle-\M$:
  \[\Int(\gamma,\eta)=\min\{|\gamma'\cap\eta'\cap(\surf_\triangle-\M)|\mid\gamma'\sim\gamma,\eta'\sim\eta\}.\]
  \item For two general closed arcs $\alpha,\beta$ in $\GCA(\surfo)$,
their intersection number is a half integer in $\tfrac{1}{2}\ZZ$ and defined as follows
(following \cite{KS}):
\[
    \Int(\alpha,\beta)
    =\tfrac{1}{2}\Int_{\Tri}(\alpha,\beta)+\Int_{\surf-\Tri}(\alpha,\beta),
\]
where
\[
    \Int_{\surf-\Tri}(\alpha,\beta)=\min\{ |\alpha'\cap\beta'\cap (\surf-\Tri)|
        \ff \alpha'\sim\alpha,\beta'\sim\beta \}
\]
and $$\Int_{\Tri}(\alpha,\beta)=\sum_{Z\in\Tri} |\{t\mid\alpha(t)=Z\}|\cdot|\{r\mid\beta(r)=Z\}|.$$
\end{itemize}
A triangulation $\T$ is a maximal collection of open arcs in $\surfo$ such that
\begin{itemize}
  \item for any $\gamma_1,\gamma_2\in\T$, $\Int(\gamma_1,\gamma_2)=0$;
  \item each triangle of $\T$ contains exactly one (decorating) point in $\triangle$.
\end{itemize}
The forgetful map $F:\surfo\rightarrow \surf$ forgetting the decorating points induces a map from $\OA(\surfo)$ to the set of arcs in $\surf$, which sends a triangulation $\T$ of $\surfo$ to a triangulation $\mathbb{T}=F(\T)$ of $\surf$. The quiver with potential $(Q_\T,W_\T)$ associated to $\T$ is defined to be $(Q_\mathbb{T},W_\mathbb{T})$.

Let $\gamma$ be an (open) arc in a triangulation $\T$. The arc $\gamma^\sharp=\gamma^\sharp(\T)$ (resp. $\gamma^\flat$) is the arc obtained from $\gamma$ by anticlockwise (resp. clockwise) moving its endpoints along the quadrilateral in $\T$ whose diagonal is $\gamma$, to the next marked points. The forward (resp. backward) flip of a triangulation $\T$ at $\gamma\in\T$ is the triangulation $\T_\gamma^\sharp=\T\cup\{\gamma^\sharp\}-\{\gamma\}$ (resp. $\T_\gamma^\flat=\T\cup\{\gamma^\flat\}-\{\gamma\}$).
See Figure~\ref{fig:5+} for example.
The exchange graph $\EG(\surfo)$ is the oriented graph whose vertices are triangulations of $\surfo$ and whose arrows correspond to forward and backward flips. From now on, fix a connected component $\EGp(\surfo)$ of $\EG(\surfo)$. When we say a triangulation $\T$ of $\surfo$, we mean $\T$ is in $\EGp(\surfo)$.
\begin{figure}[ht]\centering
\begin{tikzpicture}[xscale=.8,yscale=1]
  \foreach \j in {0, 8}
  {
    \draw[thick](0+\j,0)node[NavyBlue]{$\bullet$}to(2+\j,0)node[NavyBlue]{$\bullet$}to(3+\j,2)node[NavyBlue]{$\bullet$}
    to(1+\j,3)node[NavyBlue]{$\bullet$}to(-1+\j,2)node[NavyBlue]{$\bullet$}to(0+\j,0);
    \draw(1+\j,1)node[red]{$\circ$}(\j,1.666)node[red]{$\circ$}(2+\j,1.666)node[red]{$\circ$};
  }
\draw[NavyBlue,thick](0,0)to(1,3)to(2,0) (0,4.5)node{};
\draw[NavyBlue,thick](-1+8,2).. controls +(5:3) and +(170:3) ..(2+8,0)to(1+8,3);
\draw[blue,->,>=stealth] (1+4-1.5,1.5)to(1+4+1.5,1.5);
\end{tikzpicture}
\caption{A forward flip of a triangulation}
\label{fig:5+}
\end{figure}
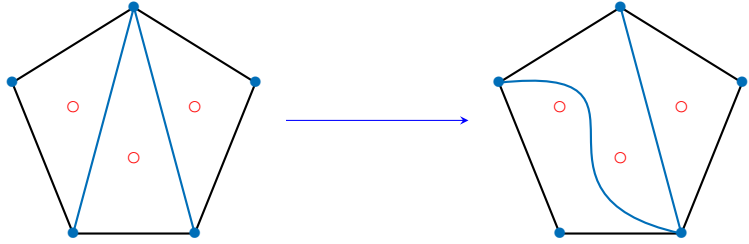
\subsection{The braid twists}\label{sec:twist}

The mapping class group $\MCG(\surfo)$ of $\surfo$ consists of the isotopy classes of the homeomorphisms of $\surf$ that fix $\partial\surf$ pointwise and fix the set $\triangle$.

For any closed arc $\eta\in\CA(\surfo)$, the braid twist $B_\eta\in\MCG(\surfo)$ along $\eta$ is defined as in Figure~\ref{fig:Braid twist}.
The braid twist group $\BT(\surfo)$ is defined as the subgroup of $\MCG(\surfo)$ generated by $B_\eta$ for $\eta\in\CA(\surfo)$.
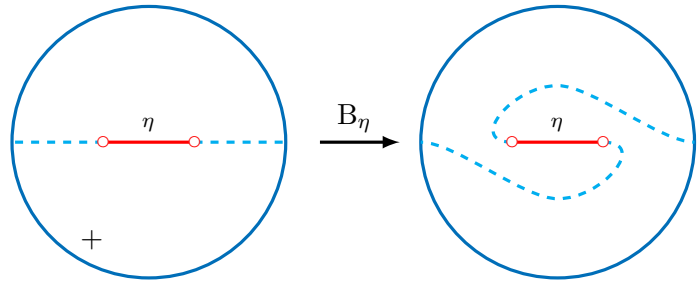
\begin{figure}[ht]\centering
\begin{tikzpicture}[scale=.3]
  \draw[very thick,NavyBlue](0,0)circle(6)node[above,black]{$_\eta$};
  \draw(-120:5)node{+};
  \draw(-2,0)edge[red, very thick](2,0)  edge[cyan,very thick, dashed](-6,0);
  \draw(2,0)edge[cyan,very thick,dashed](6,0);
  \draw(-2,0)node[white] {$\bullet$} node[red] {$\circ$};
  \draw(2,0)node[white] {$\bullet$} node[red] {$\circ$};
  \draw(0:7.5)edge[very thick,->,>=latex](0:11);\draw(0:9)node[above]{$\Bt{\eta}$};
\end{tikzpicture}\;
\begin{tikzpicture}[scale=.3]
  \draw[very thick, NavyBlue](0,0)circle(6)node[above,black]{$_\eta$};
  \draw[red, very thick](-2,0)to(2,0);
  \draw[cyan,very thick, dashed](2,0).. controls +(0:2) and +(0:2) ..(0,-2.5)
    .. controls +(180:1.5) and +(0:1.5) ..(-6,0);
  \draw[cyan,very thick,dashed](-2,0).. controls +(180:2) and +(180:2) ..(0,2.5)
    .. controls +(0:1.5) and +(180:1.5) ..(6,0);
  \draw(-2,0)node[white] {$\bullet$} node[red] {$\circ$};
  \draw(2,0)node[white] {$\bullet$} node[red] {$\circ$};
\end{tikzpicture}
\caption{The braid twist}
\label{fig:Braid twist}
\end{figure}
For a triangulation $\T=\{\gamma_1,\ldots,\gamma_n\}$ of $\surfo$, its dual triangulation $\T^\ast$ consists of the closed arcs $s_1,\ldots, s_n$ in $\CA(\surfo)$ satisfying that $\Int(\gamma_i,s_j)$ equals 1 for $i=j$ and equals 0 otherwise.

\subsection{Topological preparation}
We start with a lemma.
Let
$$\disc=\{(x,y)\in\mathds{R}^2\mid x^2+y^2<1\}$$
be a disk with three punctures $P_0=(0,\frac{1}{2})$, $P_1=(0,-\frac{1}{2})$, $P_2=(0,0)$.
Set \[\disc^{>0}=\{(x,y)\in\disc\mid y>0\}\quad\text{and}\quad\disc^{<0}=\{(x,y)\in\disc\mid y<0\}.\]
In this subsection, when we mention a curve, we always mean a continuous map from $[0,1]$ to $\disc$ such that it is disjoint with the punctures except for its endpoints. 
Let $\eta:[0,1]\rightarrow \disc$ be a curve.
Denote by $\overline{\eta}$ the curve defined as $\overline{\eta}(t)=\eta(1-t)$.
The restriction $\eta|_{[t_1,t_2]}$ is a curve defined as $\eta|_{[t_1,t_2]}(t)=\eta\left((t_2-t_1)t+t_1\right)$ if $t_1<t_2$,
and $\eta|_{[t_1,t_2]}=\overline{\eta|_{[t_2,t_1]}}$ if $t_1>t_2$.
For any two curves $\eta_1,\eta_2$ with $\eta_1(1)=\eta_2(0)\notin\{P_1,P_2,P_3\}$, their composition $\eta_2\eta_1$ is a curve defined by
$\eta_2\eta_1(t)=\eta_1(2t)$ for $0\leq t\leq \frac{1}{2}$ and
$\eta_2\eta_1(t)=\eta_2(2t-1)$ for $\frac{1}{2}\leq t\leq 1$.
For a simple curve $\eta$ whose endpoints coincide, denote by $D_\eta$ the disk (possibly with punctures) enclosed by $\eta$.

\begin{lemma}\label{lem:homotopy}
Let $\eta:[0,1]\rightarrow\disc$ be a simple curve with $\eta(0)=P_0$ and $\eta(1)=P_1$. Assume that $\eta$ is in a minimal position w.r.t. $Y=\disc\cap\{y=0\}$, with $\Int(\eta,Y)>2$.
Then there exist two simple curves
$\eta_0\subset\disc^{>0}$ and $\eta_1\subset\disc^{<0}$
satisfying
\begin{itemize}
\item $\eta_0(0)=P_0$, $\eta_0(1)=\eta(s_0)$, $\eta_1(0)=\eta(s_1)$ and $\eta_1(1)=P_1$ for some $0<s_i<1$;
\item $\eta_0\nsim\eta|_{[0,s_0]}$ and $\eta_1\nsim\eta|_{[s_1,1]}$;
\item the curves $\eta_0$, $\eta_1$ intersect $\eta$ at $\eta(s_0)$, $\eta(s_1)$ respectively from different sides;
\item the curve $\alpha_0:=\eta|_{[s_0,1]}\eta_0$ is isotopic to $\alpha_1:=\eta_1\eta|_{[0,s_1]}$ relative to $\{0,1\}$.
\end{itemize}
\end{lemma}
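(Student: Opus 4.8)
The plan is to set up the disk $\disc$ so that the distinguished arc $Y = \disc \cap \{y=0\}$, together with the puncture $P_2$, organizes all simple curves from $P_0$ to $P_1$ up to homotopy. Since $\eta$ is in minimal position with respect to $Y$ and $\Int(\eta,Y) = N > 2$, the curve $\eta$ crosses $Y$ at interior points $\eta(t_1), \dots, \eta(t_N)$ with $0 < t_1 < \dots < t_N < 1$, alternating between $\disc^{>0}$ and $\disc^{<0}$; because $\eta(0) = P_0 \in \disc^{>0}$ and $\eta(1) = P_1 \in \disc^{<0}$, the first and last sub-arcs $\eta|_{[0,t_1]}$ and $\eta|_{[t_N,1]}$ lie in the respective open half-disks. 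The key observation is that $Y \setminus \{P_2\}$ has two components, so each crossing point $\eta(t_k)$ falls on one side of $P_2$ or the other; minimality forces consecutive crossings to be "essential", i.e. no bigon can be cut off by $Y$ and an arc of $\eta$.

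First I would locate the two crossings that are adjacent to the endpoints: set $s_0 = t_1$ and $s_1 = t_N$. Then $\eta|_{[0,s_0]} \subset \disc^{>0}$ is a simple arc from $P_0$ to $\eta(s_0) \in Y$, and $\eta|_{[s_1,1]} \subset \disc^{<0}$ is a simple arc from $\eta(s_1) \in Y$ to $P_1$. Next I would construct $\eta_0 \subset \disc^{>0}$ from $P_0$ to $\eta(s_0)$ that approaches $\eta(s_0)$ from the side of $Y$ opposite to the side from which $\eta|_{[0,s_0]}$ approaches it, and which winds around $P_0$ the "other way" so that $\eta_0 \nsimeq \eta|_{[0,s_0]}$ in $\disc^{>0}$ rel endpoints; concretely, $\eta_0$ and $\eta|_{[0,s_0]}$ together bound a once-punctured (by $P_0$) bigon, which is why they are non-homotopic. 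Symmetrically construct $\eta_1 \subset \disc^{<0}$. The third bullet (intersecting $\eta$ from different sides at $\eta(s_0)$, $\eta(s_1)$) is built into the construction.

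The main work is the fourth bullet: showing $\alpha_0 := \eta|_{[s_0,1]}\eta_0 \simeq \alpha_1 := \eta_1 \eta|_{[0,s_1]}$ rel $\{0,1\}$. Both are curves from $P_0$ to $P_1$. I would argue by computing in $\pi_1(\disc \setminus \{P_0,P_1,P_2\})$, a free group of rank $3$: the concatenation $\alpha_1^{-1}\alpha_0$ is a loop based at $P_0$, and I must show it is trivial, equivalently that its class is trivial after we are allowed to pass the endpoints over $P_0$ and $P_1$ (i.e. in the appropriate quotient where loops around $P_0$ and $P_1$ are killed, since homotopy rel $\{0,1\}$ of arcs with those endpoints corresponds to equality in $\pi_1(\disc \setminus \{P_2\})$, which is trivial, up to the relative-endpoint subtlety). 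The cleanest route: observe $\disc \setminus \{P_2\}$ is simply connected relative to the punctures $P_0, P_1$ being endpoints — more precisely, any two simple arcs from $P_0$ to $P_1$ in $\disc$ that have the same "winding data" around $P_2$ are homotopic rel $\{0,1\}$, and by construction both $\alpha_0$ and $\alpha_1$ are homotopic, after sliding endpoints, to the sub-curve $\eta|_{[s_0,s_1]}$ capped off on each end by a half-disk arc, hence to each other. I expect the genuine obstacle to be making the winding-number bookkeeping around $P_2$ precise and verifying that the non-homotopy conditions $\eta_0 \nsimeq \eta|_{[0,s_0]}$, $\eta_1 \nsimeq \eta|_{[s_1,1]}$ are simultaneously compatible with the homotopy $\alpha_0 \simeq \alpha_1$; this is exactly where the hypothesis $\Int(\eta,Y) > 2$ (rather than $= 2$) is used, since it guarantees enough crossings that $\eta|_{[s_0,s_1]}$ is a nontrivial arc and the "detour" curves $\eta_0, \eta_1$ can be chosen in the topologically correct homotopy class.
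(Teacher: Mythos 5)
There is a genuine gap, and it starts with your choice of $s_0$ and $s_1$. Taking $s_0=t_1$ (the first crossing with $Y$) forces $\eta|_{[0,s_0]}$ to lie in the closed upper half-disk, which is simply connected and contains no puncture other than the common endpoint $P_0$; consequently \emph{every} simple arc $\eta_0\subset\disc^{>0}$ from $P_0$ to $\eta(s_0)$ is homotopic rel endpoints to $\eta|_{[0,s_0]}$ (the loop $\eta_0^{-1}\eta|_{[0,s_0]}$ lies in $\{y\ge 0\}$, so the disk it bounds avoids $P_1$ and $P_2$, and $P_0$ sits on its boundary). So the second bullet cannot be achieved with your choice. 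Your proposed fix is incoherent: since $P_0$ is a common endpoint of $\eta_0$ and $\eta|_{[0,s_0]}$, it lies on the boundary of any bigon they cobound, never in its interior, and a simple arc cannot wind nontrivially around its own endpoint inside the half-disk. The paper avoids this precisely by \emph{not} ending $\eta_0$ on $Y$: it connects $P_0$ inside $\disc^{>0}$ to an interior point of a later segment $\eta|_{[r_i,r_{i+1}]}\subset\disc^{>0}$ with $i\ge 2$ even (this is where $\Int(\eta,Y)>2$ enters), so that $\eta|_{[0,s_0]}$ crosses $Y$ at least twice and minimality of $\eta$ w.r.t.\ $Y$ gives $\eta_0\nsimeq\eta|_{[0,s_0]}$.

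The fourth bullet is the actual content of the lemma and your sketch does not carry it out: a homotopy rel $\{0,1\}$ cannot be produced ``after sliding endpoints,'' and the winding bookkeeping around $P_2$ that you defer is exactly the step that needs proof. In the paper this is done by forming the simple closed curves $c_0=\eta_0^{-1}\eta|_{[0,s_0]}$ and $c_1=\eta|_{[s_1,1]}\eta_1$, noting each bounds a punctured disk, showing the two disks must be nested (if disjoint, both would have to contain $P_2$), and then checking the three resulting configurations: in two of them the region between $\alpha_0$ and $\alpha_1$ is unpunctured, giving the homotopy, and the third is excluded because it would let one reduce $\Int(\eta,Y)$, contradicting minimality. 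This case analysis is also what delivers the third bullet, which concerns the two sides of the curve $\eta$ at $\eta(s_0)$ and $\eta(s_1)$ --- not, as your construction reads it, the two sides of $Y$.
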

\begin{proof}
Let $\eta\cap Y=\{\eta(r_i)\mid 0<r_1<\cdots<r_m<1\}$.
As $m>2$, we can connect $P_0$ to a point in a
segment $\eta_{[r_i,r_{i+1}]}\subset\disc^{>0}$
for some $i>0$
without intersecting $\eta$ except for the endpoints
to get an arc $\eta_0$. Similarly, we can get an arc $\eta_1$.
Moreover, $\eta_0\nsim\eta|_{[0,s_0]}$ and $\eta_1\nsim\eta|_{[s_1,1]}$
since $\eta$ is in a minimal position w.r.t. $Y$.
Let $c_0=\overline{\eta_0}\eta|_{[0,s_0]}$ and $c_1=\eta|_{[s_1,1]}\overline{\eta_1}$.
It follows that the corresponding disks $D_{c_i}$ are not contractible and hence contain at least one puncture.
Now we claim that $D_{c_0}\subset D_{c_1}$ or $D_{c_1}\subset D_{c_0}$.
Otherwise, they are disjoint since $c_0$ and $c_1$ do not intersect transversely.
So $D_{c_i}$ does not contain $P_0,P_1$;
hence both $D_{c_0}$ and $D_{c_1}$ have to contain $P_2$.
This contradicts the fact that they are disjoint.

Without loss of generality, we assume that $D_{c_0}\subset D_{c_1}$ and $\eta_0$ intersects $\eta$ at $\eta(s_0)$ from the left side.
Then up to isotopy, there are three cases shown in Figure~\ref{fig:proof}.
In the first two cases, $\eta_1$ intersects $\eta$ at $\eta(s_1)$ from the right side and the disk $D_{\overline{\alpha_1}\alpha_0}$ contains no punctures.
Hence $\alpha_0\sim\alpha_1$ relative to $\{0,1\}$ and we are done.
In case (c), we have $\eta\sim\eta_1\eta|_{[s_0,s_1]}\eta_0$.
But $\eta_1\eta|_{[s_0,s_1]}\eta_0$ has less intersections with $Y$ than $\eta$,
which is a contradiction.
This completes the proof.
\end{proof}

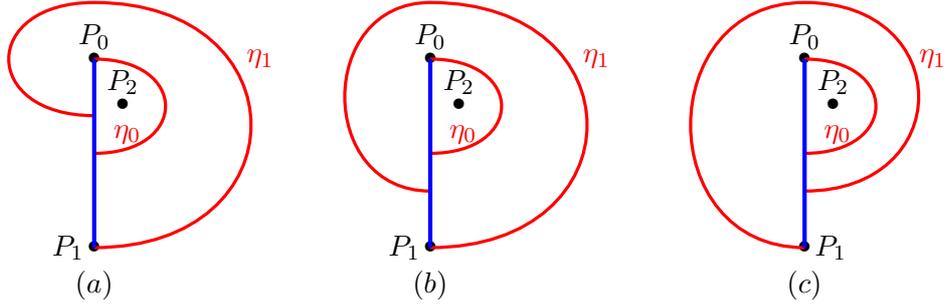
\begin{figure}[t]\centering
\begin{tikzpicture}[scale=.5]
\draw (90:2.5)node{$\bullet$} (-90:2.5)node{$\bullet$} (60:1.5)node{$\bullet$};
\draw (90:2.5)node[above]{$P_0$} (-90:2.5)node[left]{$P_1$} (60:1.5)node[above]{$P_2$};
\draw[red, very thick] (90:2.5) .. controls +(0:2.5) and +(0:2.5)..(90:0);
\draw[red, very thick] (90:-2.5) .. controls +(0:5.5) and +(0:5.5)..  (90:4) .. controls +(-180:3) and +(-180:3) .. (90:1);
\draw[red] (5,2.5)node[left]{$\eta_1$} (30:1)node{$\eta_0$};
\draw (-90:3.5)node{$(a)$};
\draw[blue,ultra thick] (90:2.5) to (-90:2.5);
\end{tikzpicture}
\begin{tikzpicture}[scale=.5]
\draw (90:2.5)node{$\bullet$} (-90:2.5)node{$\bullet$} (60:1.5)node{$\bullet$};
\draw (90:2.5)node[above]{$P_0$} (-90:2.5)node[left]{$P_1$} (60:1.5)node[above]{$P_2$};
\draw[red, very thick] (90:2.5) .. controls +(0:2.5) and +(0:2.5)..(90:0);
\draw[red, very thick] (90:-2.5) .. controls +(0:5.5) and +(0:5.5)..  (90:4) .. controls +(-180:3) and +(-180:3) .. (90:-1);
\draw[red] (5,2.5)node[left]{$\eta_1$} (30:1)node{$\eta_0$};
\draw (-90:3.5)node{$(b)$};
\draw[blue,ultra thick] (90:2.5) to (-90:2.5);
\end{tikzpicture}
\begin{tikzpicture}[scale=.5]
\draw (90:2.5)node{$\bullet$} (-90:2.5)node{$\bullet$} (60:1.5)node{$\bullet$};
\draw (90:2.5)node[above]{$P_0$} (-90:2.5)node[right]{$P_1$} (60:1.5)node[above]{$P_2$};
\draw[red, very thick] (90:2.5) .. controls +(0:2.5) and +(0:2.5)..(90:0);
\draw[red, very thick] (90:-1) .. controls +(0:4) and +(0:4)..  (90:4) .. controls +(-180:4) and +(-180:4) .. (90:-2.5);
\draw[red] (4,2.5)node[left]{$\eta_1$} (30:1)node{$\eta_0$};
\draw (-90:3.5)node{$(c)$};
\draw[blue,ultra thick] (90:2.5) to (-90:2.5);
\end{tikzpicture}
\caption{The three cases in the proof of Lemma~\ref{lem:homotopy} (topological view)}
\label{fig:proof}
\end{figure}

Now we generalize \cite[Lemma~3.14]{QQ} to the case that $\T$ is an arbitrary triangulation of $\surfo$. Recall that $\T^\ast$ is the dual triangulation of $\T$.

\begin{lemma}\label{lem:dcp}
Let $\eta$ be a closed arc in $\CA(\surfo)$ which is not in $\T^\ast$. Then there are two arcs $\alpha$, $\beta$ in $\CA(\surfo)$ such that
\numbers
\item $\Int_{\surf-\Tri}(\alpha,\beta)=0 \quad(\text{so }\Int(\alpha,\beta)\leq1)$,
\item $\eta=B_\alpha(\beta)\ \text{or}\ \eta=B_\alpha^{-1}(\beta)$,
\item $\Int(\gamma_i,\alpha)<\Int(\gamma_i,\eta)$ and
$\Int(\gamma_i,\beta)<\Int(\gamma_i,\eta)$ for any $\gamma_i\in\T$.
\ends
\end{lemma}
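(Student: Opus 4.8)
The plan is to reduce the global statement about an arbitrary triangulation $\T$ of $\surfo$ to the local disk picture handled by Lemma~\ref{lem:homotopy}. First I would fix the closed arc $\eta\in\CA(\surfo)\setminus\T^\ast$ and put it in minimal position with respect to the open arcs of $\T$. Since $\eta$ has its two endpoints at decorating points, say $Z_0,Z_1\in\Tri$, and each triangle of $\T$ contains exactly one decorating point, the arc $\eta$ traverses a sequence of triangles of $\T$. The key numerical quantity is $\sum_{\gamma_i\in\T}\Int(\gamma_i,\eta)$, the total number of times $\eta$ crosses the open arcs of the triangulation; I would argue that if this number is small enough (for instance, if $\eta$ crosses the triangulation at most twice, equivalently $\eta$ lives inside the union of two adjacent triangles), then $\eta$ must be (isotopic to) an element of $\T^\ast$, contradicting our hypothesis, or else one can directly exhibit $\alpha,\beta$ by hand in that small neighborhood. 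So the substantive case is when $\eta$ crosses $\T$ at least three times.

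In that case I would zoom in on an appropriate subsurface: pass to a subarc of $\eta$ or to the minimal polygon of triangles that $\eta$ enters, and find inside it a configuration modeled on the disk $\disc$ of Lemma~\ref{lem:homotopy} — two punctures $P_0,P_1$ playing the role of $Z_0,Z_1$, a third $P_2$ playing the role of an intermediate decorating point, and the horizontal line $Y$ playing the role of (part of) an open arc $\gamma\in\T$ that $\eta$ crosses more than twice. Lemma~\ref{lem:homotopy} then produces the auxiliary curves $\eta_0,\eta_1$ and the homotopy $\alpha_0\simeq\alpha_1$; from these I would build the closed arcs: roughly, $\beta$ is the closed arc $\eta|_{[s_0,s_1]}$ completed by $\eta_0,\eta_1$ (i.e.\ the arc running between the two decorating points along the "inner'' portion of $\eta$), and $\alpha$ is the closed arc that records the portion of $\eta$ being "undone,'' so that applying the braid twist $B_\alpha$ (or its inverse, depending on the side/orientation data in the third bullet of Lemma~\ref{lem:homotopy}) to $\beta$ recovers $\eta$. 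This gives condition $2^\circ$. Condition $1^\circ$ follows because $\eta_0\subset\disc^{>0}$ and $\eta_1\subset\disc^{<0}$ are on opposite sides of $Y$, so in $\surf-\Tri$ the arcs $\alpha,\beta$ can be realized disjointly, whence $\Int_{\surf-\Tri}(\alpha,\beta)=0$ and the half-integer intersection number is at most $1$.

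The decreasing property $3^\circ$ is where I expect the main work. I would verify it by a counting argument: each crossing of $\alpha$ with a fixed $\gamma_i\in\T$ corresponds to a crossing of $\eta$ with $\gamma_i$, and similarly for $\beta$, but the construction is arranged so that the crossings of $\eta$ with $\gamma_i$ are distributed \emph{strictly} between those of $\alpha$ and those of $\beta$ — no crossing is "shared'' in a way that would make either count reach $\Int(\gamma_i,\eta)$. The cleanest way is probably to show $\Int(\gamma_i,\alpha)+\Int(\gamma_i,\beta)\le\Int(\gamma_i,\eta)$ together with $\Int(\gamma_i,\alpha)\ge 1$ and $\Int(\gamma_i,\beta)\ge 1$ whenever the other is nonzero; but to get a strict inequality in the extreme case one must use that $\eta$ crosses the distinguished arc $\gamma$ at least three times (the hypothesis $\Int(\eta,Y)>2$ in Lemma~\ref{lem:homotopy}), which guarantees that splitting off $\alpha$ and $\beta$ genuinely removes crossings. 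I would also need to check $\alpha,\beta$ are indeed \emph{simple} closed arcs (their endpoints at distinct decorating points, no self-intersections, not contractible), which follows from the simplicity statements in Lemma~\ref{lem:homotopy} and the fact that $D_{c_i}$ is non-contractible.

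The main obstacle, then, is the bookkeeping that makes $3^\circ$ strict uniformly over all $\gamma_i\in\T$ simultaneously, especially reconciling the purely local disk model of Lemma~\ref{lem:homotopy} (which only sees one arc $Y$ and three punctures) with the global triangulation where $\eta$ may wind through many triangles. I would handle this by choosing the distinguished arc $\gamma\in\T$ to be one on which $\Int(\gamma,\eta)$ is maximal (so $\Int(\gamma,\eta)\ge 3$ unless $\eta$ already only meets the triangulation $\le 2$ times, the easy case), applying Lemma~\ref{lem:homotopy} in the neighborhood of $\gamma$, and then checking that for every \emph{other} arc $\gamma_i$ the crossings with $\alpha$ and $\beta$ are honest subsets of those with $\eta$, using minimal position throughout. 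An induction on $\sum_{\gamma_i\in\T}\Int(\gamma_i,\eta)$ is what the lemma is ultimately feeding into, and $3^\circ$ is exactly the statement that the induction parameter drops.
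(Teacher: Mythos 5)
There is a genuine gap: you have inverted the role of Lemma~\ref{lem:homotopy}, and as a result your case division does not match what actually has to be proved. In the paper, the case where $\eta$ meets at least three triangles of $\T$ is precisely the case already covered by the proof of \cite[Lemma~3.14]{Q} (one splits $\eta$ at an intermediate decorating point); Lemma~\ref{lem:homotopy} is introduced solely for the genuinely new configuration, excluded under Assumption~\ref{ass}, in which $\eta$ meets only \emph{two} triangles of $\T$. Since $\eta\notin\T^\ast$ and $\surf$ is not a once-punctured torus, those two triangles must share exactly two edges, so their union is an annulus $A$ containing all of $\eta$; the dictionary is then that $\partial\disc$ and the puncture $P_2$ are the two boundary components of $A$, $Y$ is the union of the two shared edges, and $P_0,P_1$ are the two decorating points, i.e.\ the endpoints of $\eta$. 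Your dictionary --- $P_2$ an intermediate decorating point and $Y$ a piece of a single open arc $\gamma\in\T$ --- is already inconsistent with the geometry of Lemma~\ref{lem:homotopy}, because there $Y$ passes through $P_2$, whereas open arcs of $\T$ are disjoint from $\Tri$.

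More seriously, your substantive case (at least three crossings, applying Lemma~\ref{lem:homotopy} ``in the neighborhood of'' an arc $\gamma$ maximizing $\Int(\gamma,\eta)$) cannot be carried out as stated. The lemma requires the \emph{entire} curve $\eta$ to lie in the three-punctured disk with its endpoints at $P_0,P_1$, and it produces $\eta_0,\eta_1$ running from those endpoints to interior points of $\eta$ inside the half-disks, hence disjoint from $Y$; it is exactly this disjointness from $\T$ that gives the strict decrease $3^\circ$ in the paper, via $\Int(\gamma_i,\alpha)=\Int(\gamma_i,\eta|_{[s_0,1]})$ and $\Int(\gamma_i,\beta)=\Int(\gamma_i,\eta|_{[s_0,s_1]})$. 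When $\eta$ wanders through many triangles, its endpoints are in general nowhere near the quadrilateral around $\gamma$, so any auxiliary arcs from the endpoints of $\eta$ to points near $\gamma$ must cross other arcs of $\T$, and the counting that yields $3^\circ$ collapses; you acknowledge this tension but do not resolve it. Finally, the ``easy case'' is misstated: $\Int(\eta,\T)\le 2$ does not force $\eta\in\T^\ast$ (an arc crossing two distinct edges passes through three triangles), and treating it ``by hand'' is exactly the decomposition at an intermediate decorating point from \cite[Lemma~3.14]{Q} that your outline never supplies. What is missing, then, is that argument for the many-triangle case, together with the observation that the only remaining case is the two-triangle annulus, where one checks $\Int(\eta,\T)>2$ and Lemma~\ref{lem:homotopy} applies globally, not locally.
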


\begin{proof}
Assume that $\eta$ has minimal intersections with the arcs in $\T$
without loss of generality.
If $\eta$ intersects at least three triangles of $\T$, then the assertion holds by the proof of \cite[Lemma~3.14]{QQ}. Thus we can suppose that $\eta$ intersects exactly two triangles of $\T$.

Since the original marked surface $\surf$ is not a once-punctured torus, these two triangles that intersect $\eta$
cannot share three edges. On the other hand, if they share only one edge, say $i$, then $\eta=s_i$ because $\eta$ is contained in these two triangles. This contradicts one assumption. Therefore, these two triangles share exactly two edges and they form an annulus $A$.
As we only care the interior of the union of these two triangles, we are in the situation of Lemma~\ref{lem:homotopy}:
\begin{itemize}
\item the two boundaries of $A$ correspond to $\partial\disc$ and the puncture $P_2$, respectively;
\item the endpoints of $\eta$ correspond to punctures $P_0$ and $P_1$, respectively;
\item the sharing edges topologically correspond to $Y$.
\end{itemize}
Now, since $\Int(\eta,\T)>2$, there exist arcs $\eta_0$ and $\eta_1$
satisfying the conditions in Lemma~\ref{lem:homotopy}.
Let $\alpha=\alpha_1\sim\alpha_2$ and
$\beta=\eta_1\eta|_{[s_0,s_1]}\eta_0$.
Then $\Int_{\surf-\Tri}(\alpha,\beta)=0$,
$\Int_{\Tri}(\alpha,\beta)=2$
and $\eta=B_\alpha(\beta)$
(for the case that $\eta_0$ intersects $\eta$
at $\eta(s_0)$ from the left side)
or $\eta=B_\alpha^{-1}(\beta)$
(for the case that $\eta_0$ intersects $\eta$
at $\eta(s_0)$ from the right side).
Note that $\eta_0$ and $\eta_1$ do not cross any arcs in $\T$.
Then for any $\gamma_i\in\T$,
$$\Int(\gamma_i,\alpha)=\Int(\gamma_i,\eta|_{[0,s_1]})=\Int(\gamma_i,\eta|_{[s_0,1]})<\Int(\gamma_i,\eta)$$ and $$\Int(\gamma_i,\beta)=\Int(\gamma_i,\eta|_{[s_0,s_1]})<\Int(\gamma_i,\eta).$$
Thus, we complete the proof.
\end{proof}

As an immediate consequence, the proof in \cite[Proposition~4.3 and Proposition~4.4]{QQ}
works for all cases (i.e. without Assumption~\ref{ass}):

\begin{proposition}\cite[Proposition~4.3 and Proposition~4.4]{QQ}\label{prop:r}
For any triangulation $\T$ of $\surfo$, we have
$$\BT(\surfo)=\BT(\T)\quad\text{and}\quad\CA(\surfo)=\BT(\surfo)\cdot\T^\ast.$$
\end{proposition}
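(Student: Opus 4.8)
The plan is to deduce Proposition~\ref{prop:r} from Lemma~\ref{lem:dcp} by essentially copying the inductive structure of \cite[Proposition~4.3 and Proposition~4.4]{Q}, the only new ingredient being that Lemma~\ref{lem:dcp} now holds for an \emph{arbitrary} triangulation $\T$ of $\surfo$ (whereas in \cite{Q} it was proved under Assumption~\ref{ass}). So the proof here is really a verification that the arguments in \cite{Q} used Assumption~\ref{ass} \emph{only} through \cite[Lemma~3.14]{Q}, whose generalization is precisely Lemma~\ref{lem:dcp}.

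First I would address $\CA(\surfo)=\BT(\surfo)\cdot\T^\ast$. The inclusion $\BT(\surfo)\cdot\T^\ast\subseteq\CA(\surfo)$ is clear since each $B_\eta$ is a homeomorphism fixing $\triangle$ and therefore sends closed arcs to closed arcs. For the reverse inclusion, take $\eta\in\CA(\surfo)$ and argue by induction on the total intersection number $\Int(\eta,\T):=\sum_{\gamma_i\in\T}\Int(\gamma_i,\eta)$. If $\eta\in\T^\ast$ we are done (base of the induction, together with the small cases where $\eta$ meets at most two triangles handled inside Lemma~\ref{lem:dcp}). Otherwise Lemma~\ref{lem:dcp} produces $\alpha,\beta\in\CA(\surfo)$ with $\eta=B_\alpha^{\pm1}(\beta)$ and with $\Int(\gamma_i,\alpha)<\Int(\gamma_i,\eta)$, $\Int(\gamma_i,\beta)<\Int(\gamma_i,\eta)$ for every $\gamma_i\in\T$; in particular both $\Int(\alpha,\T)$ and $\Int(\beta,\T)$ are strictly smaller than $\Int(\eta,\T)$. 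By induction $\alpha=f(s_j)$ and $\beta=g(s_k)$ for some $f,g\in\BT(\surfo)$ and $s_j,s_k\in\T^\ast$, hence $\eta=B_{f(s_j)}^{\pm1}\,g(s_k)=f B_{s_j}^{\pm1} f^{-1} g (s_k)\in\BT(\surfo)\cdot\T^\ast$, using the standard change-of-coordinates relation $B_{f(\alpha)}=f B_\alpha f^{-1}$ for a mapping class $f$. This closes the induction.

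Next, $\BT(\surfo)=\BT(\T)$, where $\BT(\T)$ denotes the subgroup generated by the braid twists $B_{s_j}$ along the arcs of the dual triangulation $\T^\ast$. The inclusion $\BT(\T)\subseteq\BT(\surfo)$ is immediate from the definitions. Conversely, $\BT(\surfo)$ is generated by $B_\eta$ over all $\eta\in\CA(\surfo)$, so it suffices to show each such $B_\eta$ lies in $\BT(\T)$. By the paragraph above, $\eta=h(s_j)$ for some $h\in\BT(\surfo)$ and $s_j\in\T^\ast$, whence $B_\eta=h B_{s_j} h^{-1}$. One then runs the same induction once more (on $\Int(\eta,\T)$): when $\Int(\eta,\T)$ is minimal, $\eta\in\T^\ast$ and $B_\eta\in\BT(\T)$ by definition; in general the Lemma~\ref{lem:dcp} decomposition $\eta=B_\alpha^{\pm1}(\beta)$ gives $B_\eta=B_\alpha^{\pm1} B_\beta B_\alpha^{\mp1}$ with $\alpha,\beta$ of strictly smaller intersection with $\T$, so by induction $B_\alpha,B_\beta\in\BT(\T)$ and therefore $B_\eta\in\BT(\T)$. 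Hence $\BT(\surfo)\subseteq\BT(\T)$, giving equality.

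The one genuine subtlety — the same point that required care in \cite{Q} — is the base of the induction, i.e.\ the case where $\eta$ meets at most two triangles of $\T$, since Lemma~\ref{lem:homotopy} (and hence Lemma~\ref{lem:dcp}) only applies when $\Int(\eta,\T)>2$. Here one uses that the underlying marked surface $\surf$ is not a once-punctured torus, so two triangles of $\T$ cannot share all three edges; the cases of sharing one or two edges are exactly what Lemma~\ref{lem:dcp} already treats (the former forcing $\eta=s_i\in\T^\ast$, the latter landing in the annulus situation of Lemma~\ref{lem:homotopy}). Apart from this, I expect no obstacle: the whole content is that replacing \cite[Lemma~3.14]{Q} by Lemma~\ref{lem:dcp} removes every appeal to Assumption~\ref{ass} in \cite[\S 4]{Q}, and the two statements of Proposition~\ref{prop:r} then follow verbatim.
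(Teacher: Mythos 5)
Your proposal is correct and matches the paper's approach: the paper deduces Proposition~\ref{prop:r} exactly by observing that the inductive arguments of \cite[Propositions~4.3 and 4.4]{Q} go through verbatim once \cite[Lemma~3.14]{Q} is replaced by Lemma~\ref{lem:dcp}, which is precisely the induction on $\Int(\eta,\T)$ (with the conjugation relation $B_{f(\alpha)}=fB_\alpha f^{-1}$) that you spell out.
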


\section{Intersection numbers and dimensions of Homs}\label{Sec:3}
Recall that in \cite{QQ}, the author gives a bijection
from the set of closed arcs in $\surfo$ to the set of reachable spherical objects in $\D_{fd}(\Gamma_{\T_0})$,
for any triangulation $\T_0$ such that any two of its triangles share at most one edge.
In this section, we generalize this bijection to arbitrary triangulation $\T$ (of any decorated marked surface).

\subsection{The string model}\label{sec:above}
As in Appendix~\ref{app:A}, we have the following.
For each $\eta\in\GCA(\surfo)$, $X_\eta$ is an object in $\D_{fd}(\Gamma_{\T})$,
which induces a map
\begin{gather}\label{eq:X}
    \widetilde{X}_{\T}\colon \CA(\surfo)\to \D_{fd}(\Gamma_{\T})/[1],
\end{gather}
\[\quad\qquad\eta\mapsto X_\eta[\ZZ].\]
The notation $X[\ZZ]$ means the shift orbit $\{X[i]\mid i\in\mathbb{Z}\}$.
Moreover,  
Let $\sigma,\tau$ be oriented general closed arcs in $\surfo$ with $\Int_{\surf-\Tri}(\sigma,\tau)=0$ and whose starting points coincide. 
Proposition~\ref{defpp} gives a non-zero morphism 
$$\varphi(\sigma,\tau)\in\Hom_{\D_{fd}(\Gamma_{\T})}(X_\sigma,X_\tau[\upsilon]).$$
In the following, we keep the notations in Appendix~\ref{app:A}
and upgrade Proposition~\ref{prop:key} first.

\begin{proposition}\label{pp}
If $\sigma,\tau$ only share their starting but not ending points,
then there is a non-split triangle in $\D_{fd}(\Gamma_{\T})$, whose image in $\D_{fd}(\Gamma_{\T})/[1]$ is
\begin{gather}\label{eq:SES1}
\widetilde{X}(B_\sigma(\tau))\longrightarrow
\widetilde{X}(\sigma)\xrightarrow{\varphi(\sigma,\tau)} \widetilde{X}(\tau)\longrightarrow \widetilde{X}(B_\sigma(\tau))
\end{gather}
If $\sigma,\tau$ share both the endpoints, i.e. $\Int_{\bigtriangleup}(\sigma,\tau)=2$, then there is a non-split triangle in $\D_{fd}(\Gamma_{\T})$, whose image in $\D_{fd}(\Gamma_{\T})/[1]$ is
\begin{gather}\label{eq:SES}
\widetilde{X}(B_\sigma(\tau))\longrightarrow
\widetilde{X}(\sigma) \oplus \widetilde{X}(\sigma)\xrightarrow{\left(\begin{smallmatrix}
\varphi(\sigma,\tau)&\varphi(\overline{\sigma},\overline{\tau})
\end{smallmatrix}\right)} \widetilde{X}(\tau)\longrightarrow \widetilde{X}(B_\sigma(\tau))
\end{gather}
where $\varphi(\sigma,\tau)$ and $\varphi(\overline{\sigma},\overline{\tau})$ are linearly independent.
\end{proposition}

\begin{proof}
For the case $\Int_{\bigtriangleup}(\sigma,\tau)=1$, we have $B_\sigma(\tau)=\tau\wedge\sigma$. So the triangle in Prop~\ref{prop:key} becomes \eqref{eq:SES1}.
	
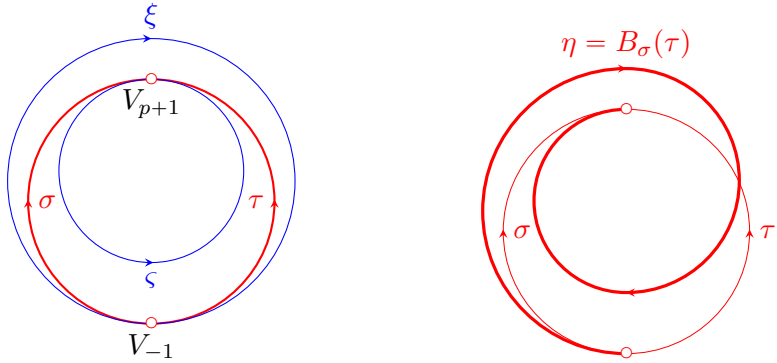
\begin{figure}[ht]\centering
	\begin{tikzpicture}[scale=.27]
	\draw[red,thick](0,0)circle(6);\draw[blue](0,1.5)circle(4.5);
	\draw[blue](0,1)circle(7);
	\draw[red,->,>=stealth](6,0)node[left]{$\tau$}to(6,.1);
	\draw[red,->,>=stealth](-6,0)node[right]{$\sigma$}to(-6,.1);
	\draw[blue,->,>=stealth](0,-3)to(0.1,-3);
	\draw[blue,<-,>=stealth](0,8)to(-.1,8);
	\draw[blue](0,-3)node[below]{$\varsigma$}(0,8)node[above]{$\xi$};
	\draw(0,-6)node[below]{$V_{-1}$}(0,6)node[below]{$V_{p+1}$};
	\draw(0,6)node[white] {$\bullet$} node[red]{$\circ$};\draw(0,-6)node[white]
        {$\bullet$} node[red]{$\circ$};
	\end{tikzpicture}
	\qquad\qquad\qquad
	\begin{tikzpicture}[scale=.27]
	\draw[red,very
    thin](0,0)circle(6);\draw[red](0,8)node[above]{$\eta=B_\sigma(\tau)$};
	\draw[red,->,>=stealth](-6,0)node[right]{$\sigma$}to(-6,.1);
	\draw[red,->,>=stealth](6,0)node[right]{$\tau$}to(6,.1);
	\draw[red,<-,>=stealth](0,-3)to(0.1,-3);
	\draw[red,<-,>=stealth](0,8)to(-.1,8);
	\draw[red,very thick](0,6) arc(90:270:4.5)arc(-90:90:5.5)arc(90:270:7);
	\draw(0,6)node[white] {$\bullet$} node[red]{$\circ$};\draw(0,-6)node[white]
    {$\bullet$} node[red]{$\circ$};
	\end{tikzpicture}
	\caption{The braid twists as compositions of extensions}
	\label{fig}
\end{figure}
	
Now consider the case $\Int_{\bigtriangleup}(\sigma,\tau)=2$, see Figure~\ref{fig}. Let $\varsigma=\tau\wedge\sigma$.
Then we have $$\eta=B_\sigma(\tau)=\overline{\varsigma}\wedge\overline{\sigma}.$$
Similarly, let $\xi=\overline{\tau}\wedge\overline{\sigma}$ and
then we have $$\eta=B_\sigma(\tau)=\overline{\xi}\wedge\sigma.$$
Note that the starting segments of $\overline{\sigma}$, $\overline{\tau}$ and $\overline{\varsigma}$
are in clockwise order at the common starting point, see Figure~\ref{fig}.
By Corollary~\ref{cor:comp}, we have $$\varphi(\overline{\tau},\overline{\varsigma})\circ\varphi(\overline{\sigma},\overline{\tau})=\varphi(\overline{\sigma},\overline{\varsigma}).$$
By Proposition~\ref{prop:key}, the mapping cones of $\varphi(\overline{\tau},\overline{\varsigma})$, $\varphi(\overline{\sigma},\overline{\tau})$ and $\varphi(\overline{\sigma},\overline{\varsigma})$ are in $\widetilde{X}(\sigma)$, $\widetilde{X}(\xi)$ and $\widetilde{X}(\eta)$, respectively. Then applying Octahedral Axiom to this composition gives the following commutative diagram of (images in $\D_{fd}(\Gamma_{\T})/[1]$ of) triangles
\[\xymatrix@C=3pc@R=3pc{
	&\widetilde{X}(\varsigma)\ar[d]\ar@{=}[r]&\widetilde{X}(\varsigma)\ar[d]&\\	
    \widetilde{X}(\xi)\ar[r]\ar@{=}[d]&\widetilde{X}(\eta)\ar[r]\ar[d]&\widetilde{X}(\sigma)\ar[d]^{\varphi(\sigma,\tau)}
		\ar[r]&\widetilde{X}(\xi)\ar@{=}[d]&\\
	\widetilde{X}(\xi)\ar[r]&\widetilde{X}(\sigma)\ar[d]_{\varphi(\overline{\sigma},\overline{\varsigma})}
		\ar[r]^{\varphi(\overline{\sigma},\overline{\tau})}
		&\widetilde{X}(\tau)\ar[d]^{\varphi(\overline{\tau},\overline{\varsigma})}\ar[r]
		&\widetilde{X}(\xi)\\
		&\widetilde{X}(\varsigma)\ar@{=}[r]&\widetilde{X}(\varsigma)&
}\]
Then we have the triangle \eqref{eq:SES}. Since $\varphi(\overline{\tau},\overline{\varsigma})\circ\varphi(\sigma,\tau)=0$ by the triangle in the third column and $\varphi(\overline{\tau},\overline{\varsigma})\circ\varphi(\overline{\sigma},\overline{\tau})=\varphi(\overline{\sigma},\overline{\varsigma})\neq0$,
we deduce that $\varphi(\sigma,\tau)$ and $\varphi(\overline{\sigma},\overline{\tau})$ are linearly independent.
\end{proof}

\subsection{A first formula}
\begin{assumption}\label{ass}
Suppose that $\surf$ admits a triangulation $\T_0$,
such that any two triangles share at most one edge
(i.e. there is no double arrow in the corresponding quiver $Q_{\T_0}$).
\end{assumption}
Let $\T_0=\{\delta_1,\ldots,\delta_n\}$,
and $\T_0^\ast=\{t_1,\ldots,t_n\}$ be the dual of $\T_0$.
Write $\Gamma_0=\Gamma_{\T_0}$ and denote by $\h_0$ the canonical heart of $\D_{fd}(\Gamma_{\T_0})$
with simples $T_i$ corresponding to $t_i$.
By \cite[\S~6]{QQ}, we have the following:
\begin{itemize}
\item The map $\widetilde{X}_{0}=\widetilde{X}_{\T_0}$ in \eqref{eq:X} induces a bijection
$\widetilde{X}_0\colon\CA(\surfo)\xrightarrow{\text{1-1}}\Sph(\Gamma_0)/[1]$
and an isomorphism
\begin{gather}\label{eq:Qmain}
    \iota_0\colon\BT(\T_0)\to\ST(\Gamma_0),
\end{gather}
sending the generator $\Bt{t_i}$ to the generator $\phi_{T_i}$.
\item  There is a commutative diagram
\[\begin{tikzpicture}[xscale=.6,yscale=.6]
\draw(180:3)node(o){$\CA(\surfo)$}(-3,2.2)node(b){\small{$\BT(\surfo)$}}
(0,2.5)node{$\iota_0$}(0,.5)node{$\widetilde{X}_0$};
\draw(0:3)node(a){$\Sph(\Gamma_0)/[1]$}(3,2.2)node(s){\small{$\ST(\Gamma_0)$}};
\draw[->,>=stealth](o)to(a);\draw[->,>=stealth](b)to(s);
\draw[->,>=stealth](-3.2,.6).. controls +(135:2) and +(45:2) ..(-3+.2,.6);
\draw[->,>=stealth](3-.2,.6).. controls +(135:2) and +(45:2) ..(3+.2,.6);
\end{tikzpicture}
\]
in the sense that,
for any $b\in\BT(\surfo)$ and $\eta\in\CA(\surfo)$, we have
\begin{gather}
\label{eq:actions+}
    \widetilde{X}_0\left( b(\eta) \right)
    =\iota_0(b) \left( \widetilde{X}_0(\eta) \right).
\end{gather}
\end{itemize}

We will use $X^0_{\eta}$ to denote an object in the shift orbit $\widetilde{X_0}(\eta)$.

\begin{lemma}\label{lem:00}
Let $\sigma,\tau\in\GCA(\surfo)$ be two oriented arcs sharing the same starting point and let $t_i\in\T_0^\ast$.
Suppose that $t_i,\sigma,\tau$ are pairwise different,
do not intersect each other in $\surf-\Tri$ and
their start segments are in anticlockwise order at the common starting point (cf. Figure~\ref{fig:ab}).
Then $\varphi(\sigma,\tau)\circ\varphi(t_i,\sigma)=0.$
\end{lemma}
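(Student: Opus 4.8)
The plan is to deduce the vanishing $\varphi(\sigma,\tau)\circ\varphi(t_i,\sigma)=0$ from the string-model machinery of Appendix~\ref{app:A}, using a composition argument very much in the spirit of the proof of Proposition~\ref{pp}. The point is that, by the hypothesis, $t_i$, $\sigma$, $\tau$ pairwise do not intersect in $\surf-\Tri$ and share the same starting decorating point, with start segments in anticlockwise order; such triples of oriented arcs are exactly the configurations to which the composition formulas (Corollary~\ref{cor:comp}, Proposition~\ref{pp:key}) apply. So I would first set up the relevant ``triangle concatenation'' $t_i\wedge\sigma$, $\sigma\wedge\tau$, $t_i\wedge\tau$ at the common starting point, and check that with the given anticlockwise ordering the composition relation reads
\[
    \varphi(\sigma,\tau)\circ\varphi(t_i,\sigma)=\varphi(t_i,\tau),
\]
that is, the map $\varphi(t_i,\sigma)$ followed by $\varphi(\sigma,\tau)$ factors through $\varphi(t_i,\tau)$ (up to the usual shift bookkeeping $[?]$), exactly as in the diagram in the proof of Proposition~\ref{pp}.

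Next I would argue that the right-hand side $\varphi(t_i,\tau)$ is actually zero. This is where the hypothesis that the shared starting point is a vertex $t_i\in\T^\ast$ of the dual triangulation enters: $t_i$ is the dual arc crossing a single arc $\gamma_i\in\T$ exactly once, so $X_{t_i}$ is (the shift orbit of) the simple $T_i$ in the canonical heart $\h_0$ — more precisely, $t_i\in\T^\ast$ forces the string of $t_i$ to be the trivial/length-zero string at that vertex. Then $\varphi(t_i,\tau)$ is a morphism out of a simple-like object $X_{t_i}$ determined by a concatenated string $t_i\wedge\tau$; since $\tau$ does not meet $t_i$ except at the shared endpoint, there is no room for a nonzero string morphism of the required type — concretely, the arrow/loop in $\overline{Q}$ that $\varphi(t_i,\tau)$ would have to use is not present, or the target string does not contain the image substring. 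I would make this precise by invoking the explicit description of the $\varphi$-morphisms and their (non)vanishing from Appendix~\ref{app:A}, reading off that the configuration ``$t_i$ followed by $\tau$ with nothing between them'' contributes the zero map.

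Alternatively — and this may be the cleaner route — I would avoid computing $\varphi(t_i,\tau)$ directly and instead run the octahedral argument as in Proposition~\ref{pp}: form the triangle on $\varphi(t_i,\sigma)$ with third term $X_{?}$, the triangle on $\varphi(\sigma,\tau)$, and use that the mapping cone of $\varphi(t_i,\sigma)$ is again one of the string objects $X_\rho$ with $\rho$ an arc disjoint from $\tau$ off $\Tri$; then $\Hom(X_\rho, X_\tau)$ in the relevant degree vanishes by the $\Hom$-computations of the appendix, which kills the composite. The main obstacle, I expect, is purely bookkeeping: pinning down the precise shifts $[?]$ and checking that the anticlockwise-ordering hypothesis produces the composition relation with the correct signs/directions rather than its ``opposite'' (which would instead give a relation like $\varphi(\bar t_i,\bar\sigma)\circ\cdots$), so that the term we want to vanish really is $\varphi(t_i,\tau)$ and not some nonzero companion morphism. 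Once the appendix's composition and vanishing lemmas are cited correctly, the statement follows in a couple of lines.
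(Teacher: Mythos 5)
There is a genuine gap here, and in fact your first route, if it worked, would prove the opposite of the lemma. Corollary~\ref{cor:comp} applies only when the three start segments are in \emph{clockwise} order, whereas the hypothesis of Lemma~\ref{lem:00} puts $t_i,\sigma,\tau$ in \emph{anticlockwise} order; so you are not entitled to the relation $\varphi(\sigma,\tau)\circ\varphi(t_i,\sigma)=\varphi(t_i,\tau)$. Moreover, even if such a relation held it would refute the statement rather than prove it: by Definition-Proposition~\ref{defpp} (via Lemmas~\ref{lem:s} and~\ref{lem:s=0}) the morphism $\varphi(t_i,\tau)$ is \emph{nonzero} whenever $t_i$ and $\tau$ share their starting point and are disjoint off $\Tri$, so your claim that ``there is no room for a nonzero string morphism'' is false. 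The alternative route also fails as stated: to kill the composite using the triangle $T_i\to X_\sigma\to X_\rho\to T_i[1]$ built on $\varphi(t_i,\sigma)$, what you need is that $\varphi(\sigma,\tau)$ \emph{factors through} $X_\rho$, not that $\Hom(X_\rho,X_\tau)$ vanishes (the long exact sequence runs the other way, so such a vanishing would obstruct, not produce, the factorization); and appealing to ``the $\Hom$-computations'' is circular, because the dimension formulas of Section~\ref{Sec:3} (Lemma~\ref{lem:decomp}, Proposition~\ref{prop:homint}) are themselves proved using Lemma~\ref{lem:00}.

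The missing idea is the Calabi-Yau duality argument the paper uses. Suppose $f=\varphi(\sigma,\tau)\circ\varphi(t_i,\sigma)\neq 0$ in $\Hom(T_i,X_\tau[m])$. Since $\D_{fd}(\Gamma_{\T_0})$ is 3-Calabi-Yau, there is $f^*\colon X_\tau[m]\to T_i[3]$ with $f^*\circ f\neq0$. Because $t_i,\sigma,\tau$ are anticlockwise, the triple $\sigma,t_i,\tau$ is clockwise, so Corollary~\ref{cor:comp} gives the \emph{other} factorization $\varphi(\sigma,\tau)=\varphi(t_i,\tau)\circ\varphi(\sigma,t_i)$, and the nonzero composite $f^*\circ f$ then passes through some $T_i[N]$. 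Sphericality of $T_i$ forces $N=0$ or $N=3$: if $N=0$, the composite $T_i\to X_\sigma[l]\to T_i$ is an isomorphism, forcing $X_\sigma[l]\cong T_i$ and contradicting $\sigma\neq t_i$ (injectivity of $\widetilde{X}_0$); if $N=3$, one similarly gets $X_\tau[m]\cong T_i[3]$, contradicting $\tau\neq t_i$. Some argument of this kind is required; the ingredients you cite do not assemble into a proof.
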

\begin{proof}
Suppose that $f=\varphi(\sigma,\tau)\circ\varphi(t_i,\sigma)\neq0$ in $\Hom(T_i,X^0_\tau[m])$
for some integer $m$.
Since $\D_{fd}(\Gamma_{\T_0})$ is 3-Calabi-Yau,
there exists $f^*\in\Hom(X^0_\tau[m],T_i[3])$ such that $f^*\circ f\neq0$.
On the other hand, the start segments of $\sigma$, $t_i$, and $\tau$ are in clockwise order. Then by Corollary~\ref{cor:comp}, $\varphi(\sigma,\tau)=\varphi(t_i,\tau)\circ\varphi(\sigma,t_i)$,
where the morphisms are properly shifted.
Therefore, there is a non-zero composition
\[f^*\circ f\colon
    T_i    \xrightarrow{\varphi(t_i,\sigma)}   X^0_{\sigma}[l]
            \xrightarrow{\varphi(\sigma,t_i)}   T_i[N]
            \xrightarrow{\varphi(t_i,\tau)}   X^0_{\tau}[m]
            \xrightarrow{f^*}   T_i[3].\]
Since $T_i$ is a spherical object, we have $N=0$ or $N=3$.
If $N=0$, then the first two morphisms in the composition above must be identity (up to scale),
which forces $X^0_\sigma[l]=T_i$.
Since $\widetilde{X}_0$ is a bijection, this contradicts $\sigma\neq t_i$.
If $N=3$, similarly we have $X^0_\tau[m]=T_i[3]$, which contradicts $\tau\neq t_i$.
\end{proof}

For any general closed curve $\sigma\in\GCA(\surfo)$, we define $l_0(\sigma)=\Int(\sigma,\T_0)=\sum_{i=1}^n\Int(\sigma,t_i).$

\begin{lemma}\label{lem:decomp}
Let $\sigma$, $\tau$ be two general closed curves in $\GCA(\surfo)$ sharing the same starting point and $\Int_{\surf-\Tri}(\sigma,\tau)=0$. Let $\eta=\tau\wedge\sigma$ and assume that $l_0(\eta)=l_0(\sigma)+l_0(\tau)$. If
\begin{gather}\label{eq:int}
\Int(t_i,\eta)=\Int(t_i,\sigma)+\Int(t_i,\tau),
\end{gather}
holds for some $t_i\in\T_0^\ast$, then we have
\begin{gather}\label{eq:dim}
\dim\Hom^\ZZ(T_i,X^0_\eta)=
\dim\Hom^\ZZ(T_i,X^0_\sigma)
+\dim\Hom^\ZZ(T_i,X^0_\tau).
\end{gather}
\end{lemma}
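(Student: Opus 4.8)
The plan is to exploit the triangle produced by Proposition~\ref{pp} and turn the hypothesis $l_\T(\eta)=l_\T(\sigma)+l_\T(\tau)$ into the statement that this triangle induces a short exact sequence on the relevant $\Hom^\ZZ$-spaces. First I would note that $\eta=\tau\wedge\sigma=B_\sigma(\tau)$ in the notation of Proposition~\ref{pp}, so depending on whether $\Int_\Tri(\sigma,\tau)=1$ or $2$ we get one of the non-split triangles \eqref{eq:SES1} or \eqref{eq:SES}, with middle term $X_\sigma[m]$ (resp.\ $X_\sigma[m_1]\oplus X_\sigma[m_2]$). Applying the cohomological functor $\Hom^\ZZ(T_i,-)$ to this triangle yields a long exact sequence relating $\Hom^\ZZ(T_i,X_\eta[\bullet])$, $\Hom^\ZZ(T_i,X_\sigma[\bullet])$ and $\Hom^\ZZ(T_i,X_\tau[\bullet])$. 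The equality \eqref{eq:dim} will follow at once from additivity of dimension in a short exact sequence, \emph{provided} the connecting maps vanish, i.e.\ provided the induced map $\Hom^\ZZ(T_i,X_\sigma[m])\to\Hom^\ZZ(T_i,X_\tau)$ (resp.\ from the direct sum) is zero.

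The core of the argument is therefore to show that the composite of $\varphi(\sigma,\tau)$ (and, in the two-intersection case, also $\varphi(\overline\sigma,\overline\tau)$) with any morphism $T_i\to X_\sigma[\ast]$ vanishes. Here is where I would invoke the positivity/length hypothesis together with Lemma~\ref{lem:00}. Any nonzero morphism $T_i\to X_\sigma[\ast]$ should, by the string-model description in Appendix~\ref{app:A}, be (a scalar multiple of, or a sum of terms of the form) $\varphi(t_i,\sigma)$ for a suitable orientation of $\sigma$ starting at the endpoint corresponding to $t_i$; the assumption $l_\T(\eta)=l_\T(\sigma)+l_\T(\tau)$ guarantees that $\sigma$ and $\tau$ concatenate ``without backtracking'' at their common starting point, so that the three arcs $t_i,\sigma,\tau$ are arranged with the cyclic order needed to apply Lemma~\ref{lem:00}. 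Thus $\varphi(\sigma,\tau)\circ\varphi(t_i,\sigma)=0$, and similarly $\varphi(\overline\sigma,\overline\tau)\circ\varphi(t_i,\overline\sigma)=0$ when $\Int_\Tri(\sigma,\tau)=2$. Hence the middle map in the long exact sequence is zero on the image of $\Hom^\ZZ(T_i,-)$ applied to $X_\sigma$, which forces the long exact sequence to break into short exact sequences
\[
0\to\Hom^\ZZ(T_i,X_\sigma[\ast])\to\Hom^\ZZ(T_i,X_\eta[\ast])\to\Hom^\ZZ(T_i,X_\tau[\ast])\to 0
\]
in each degree, and summing over $\ZZ$ gives \eqref{eq:dim}.

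The step I expect to be the main obstacle is the reduction ``every nonzero element of $\Hom^\ZZ(T_i,X_\sigma)$ factors through $\varphi(t_i,\sigma)$ in a way compatible with Lemma~\ref{lem:00}''. Lemma~\ref{lem:00} as stated requires $t_i,\sigma,\tau$ to be pairwise distinct, to not intersect in $\surf-\Tri$, and to have their start segments in a prescribed (anticlockwise) cyclic order; I will need to verify each of these from the data. Pairwise distinctness is automatic since $\sigma\neq\tau$ and $t_i$ lies in $\T^\ast$ while $\sigma,\tau$ need not; the condition $\Int_{\surf-\Tri}(\sigma,\tau)=0$ is given, and $\Int_{\surf-\Tri}(t_i,\sigma)=\Int_{\surf-\Tri}(t_i,\tau)=0$ should follow because $t_i$ is a dual arc crossing only $\gamma_i$, combined with the additivity \eqref{eq:int} forcing $\sigma$ and $\tau$ to each realize their intersection with $t_i$ disjointly near the shared endpoint. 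The cyclic-order condition is the delicate point: if the start segments happen to be in clockwise rather than anticlockwise order, one should instead use the dual (3-Calabi-Yau) version of Lemma~\ref{lem:00}, or reorient, and in the $\Int_\Tri=2$ case one of the two orientations $(\sigma,\tau)$, $(\overline\sigma,\overline\tau)$ will be anticlockwise while the other is clockwise, so both composites still vanish (the clockwise one by the Serre-dual argument inside the proof of Lemma~\ref{lem:00}). Once these geometric bookkeeping points are settled, the homological conclusion is immediate.
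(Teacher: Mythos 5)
Your overall strategy (apply $\Hom^\ZZ(T_i,-)$ to a triangle relating $X_\sigma$, $X_\tau$, $X_\eta$ and show the relevant induced map vanishes, using Lemma~\ref{lem:00}) is the right shape and is the paper's shape as well, but there are concrete gaps. First, you work with $\eta=B_\sigma(\tau)$ and the triangles of Proposition~\ref{pp}, whereas the lemma's $\eta$ is the extension $\tau\wedge\sigma$; these coincide only when $\Int_{\Tri}(\sigma,\tau)=1$. In the case where $\sigma$ and $\tau$ share both endpoints, $B_\sigma(\tau)\neq\tau\wedge\sigma$, and your triangle \eqref{eq:SES}, with middle term $X_\sigma[m_1]\oplus X_\sigma[m_2]$, would (even granting vanishing of the connecting maps) yield $2\dim\Hom^\ZZ(T_i,X_\sigma)+\dim\Hom^\ZZ(T_i,X_\tau)$, not \eqref{eq:dim} --- a sign that the wrong object is being computed. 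The correct input is the triangle of Proposition~\ref{pp:key}, which has a single copy of $X_\sigma$ and applies uniformly (the lemma also allows $\sigma=\tau$, which Proposition~\ref{pp} does not cover; this case needs the separate remark made via $(\eta,\sigma)$). Second, your key reduction ``every nonzero morphism $T_i\to X_\sigma[\ast]$ is essentially $\varphi(t_i,\sigma)$'' is false: $\Hom^\ZZ(T_i,X_\sigma)$ has contributions from every intersection of $t_i$ with $\sigma$, and a general $f$ has components at every letter of the string. What the hypothesis $l_\T(\eta)=l_\T(\sigma)+l_\T(\tau)$ actually buys is that $\varphi(\sigma,\tau)=\varphi(\sigma,\tau)_0$ has a single nonzero component $\pi_{c_0}$ at the first letter $T_{k_0}$, so that $\varphi(\sigma,\tau)\circ f$ depends only on the component $f_0$ of $f$ at that letter; one then splits into the case where $V_{-1}$ is not an endpoint of $t_i$ (there $f_0=\pi_c$ with $c$ not from $\Lambda_0$, and Lemma~\ref{lem:comp} kills the composite) and the case where it is (there $f_0=\varphi(t_i,\sigma)$ and Lemma~\ref{lem:00} applies). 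Your proposal is missing this reduction and the first case entirely.

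Third, the role of \eqref{eq:int} is misplaced. It is exactly this hypothesis that forces the start segments of $t_i,\sigma,\tau$ to be in anticlockwise order at the shared decorating point, which is what Lemma~\ref{lem:00} needs. Your fallback for the clockwise configuration (``use the dual version of Lemma~\ref{lem:00}, or reorient'') cannot work: in the clockwise configuration Corollary~\ref{cor:comp} gives $\varphi(\sigma,\tau)\circ\varphi(t_i,\sigma)=\varphi(t_i,\tau)\neq0$, the connecting map does not vanish, and the additivity \eqref{eq:dim} genuinely fails --- the hypothesis \eqref{eq:int} is there precisely to exclude that configuration, not a bookkeeping nuisance to be argued around. (A minor further point: with the triangle $X_\sigma[-\upsilon]\to X_\tau\to X_\eta[l]\to X_\sigma[1]$, the short exact sequences obtained after the vanishing run $0\to\Hom(T_i[r],X_\tau)\to\Hom(T_i[r],X_\eta[l])\to\Hom(T_i[r],X_\sigma[1-\upsilon])\to0$, not in the order you wrote; summed over shifts this still gives \eqref{eq:dim}.)
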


\begin{proof}
Use the notations for $\sigma$ and $\tau$ in Appendix~\ref{app:1}.
By Proposition~\ref{prop:key}, there is a non-trivial triangle
\[X^0_{\sigma}[-\upsilon]\xrightarrow{\varphi(\sigma,\tau)}
X^0_{\tau}\xrightarrow{\varphi(\overline{\tau},\overline{\eta})}
X^0_{\eta}[l]
\xrightarrow{\varphi(\eta,\overline{\sigma})}
X^0_{\sigma}[-\upsilon+1]\]
for some integers $\upsilon,l$.
Note that, in the case when $\sigma=\tau$, we will apply Proposition~\ref{prop:key}
to $(\eta,\sigma)$ instead of $(\sigma,\tau)$. Nevertheless, we will get the same triangle.
So it is sufficient to prove the map
\begin{gather}\label{eq:mor}\Hom(T_i[r],X^0_\sigma[-\upsilon])
\xrightarrow{\Hom(T_i[r],\varphi(\sigma,\tau))}
\Hom(T_i[r],X^0_\tau)\end{gather}
is zero for any $r\in\ZZ$.
Since $l_0(\eta)=l_0(\sigma)+l_0(\tau)$, by Construction~\ref{cons:above}, we have that $\varphi(\sigma,\tau)$ is of the following form
\[\xymatrix@C=6pc{T_{k_0}\ar[d]^{\varphi_0}\ar@{-}[r]^{\pi_{a_1}\quad}
&T_{k_1}[\varrho_1]\ar@{-}[r]&\cdots\\
T_{j_0}[\upsilon]\ar@{-}[r]^{(-1)^{\upsilon}\pi_{b_1}[\upsilon]\quad}&T_{j_1}[\kappa_1+\upsilon]\ar@{-}[r]&\cdots
}\]
where $\varphi_0$ is induced from the triangle $\Lambda_0$. Then for any $f:T_i[r]\to X^0_\sigma$, the composition $\varphi(\sigma,\tau)\circ f$ is given by $\varphi_0\circ f_0$, where $f_0$ is the component of $f$ from $T_i[r]$ to $T_{k_0}$. Thus, the map \eqref{eq:mor} being zero is equivalent to $\varphi_0\circ f_0=0$.

\parindent=-0pt\textbf{Case I:}
If the common starting point $V_{-1}=W_{-1}$ of $\sigma$ and $\tau$ is not an endpoint of $t_i$,
then $\gamma_i$ is not an edge of the triangle $\Lambda_0$. So $f_0$ is not induced from $\Lambda_0$. Then by Lemma~\ref{lem:comp}, $\varphi_0\circ f_0=0$ as required.

\parindent=-0pt\textbf{Case II:}
If $V_{-1}=W_{-1}$ is an endpoint of $t_i$,
then $f_0$ is the unique nonzero component of $\varphi(t_i,\sigma)$ by Construction~\ref{cons:above}.
Furthermore, \eqref{eq:int} implies that the segment $V_0W_0$ in $\eta$ intersects $t_i$
as shown in Figure~\ref{fig:ab}.
In particular, the start segments of $t_i$, $\sigma$ and $\tau$ are in anti-clockwise order. Hence, by Lemma~\ref{lem:00}, $\varphi(\sigma,\tau)\circ\varphi(t_i,\sigma)=0$, which implies that $\varphi_0\circ f_0=0$ as required.
\begin{figure}[h]\centering
\begin{tikzpicture}[scale=.7]
\draw[blue] (30:3) edge[bend right] (-45:4)(-45:4.5)node[black]{$W_0$}
(-3,0) edge[bend left=20](-50:4)(30:3.5)node[black]{$V_0$}
(-3,0) edge[bend right=15](35:3)
(2.5,-1.5)node{$\eta$}(0,-2)node{$\tau$}(0,1)node{$\sigma$};
\draw[red,thick] (0:-3)node[left]{{$V_{-1}=W_{-1}$}} -- (0:3)node[right]{$t_i$}(1.8,.3)node[black]{{}}
(2.1,0)node[black]{{$\bullet$}};\draw (0:-3)node[white]{$\bullet$}node[red]{$\circ$};
\end{tikzpicture}
\caption{The composition of $\sigma$ and $\tau$}
\label{fig:ab}
\end{figure}
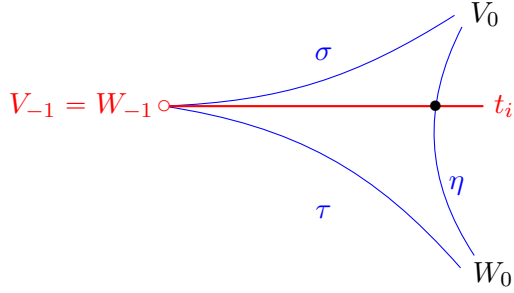

\end{proof}

\begin{proposition}\label{prop:homint}
Let $\eta\in\GCA(\surfo)$.
Under Assumption~\ref{ass}, we have
\begin{gather}\label{eq:homint}
\dim\Hom^\ZZ(T_i,\widetilde{X}_0(\eta))=2\Int(t_i,\eta).
\end{gather}
\end{proposition}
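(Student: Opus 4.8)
The plan is to prove \eqref{eq:homint} by induction on the quantity $l_{\T_0}(\eta)=\Int(\eta,\T_0)$, using Lemma~\ref{lem:dcp} to peel off one braid twist at a time. First I would treat the base case. By Proposition~\ref{prop:r} applied to $\T_0$ (which exists by Assumption~\ref{ass}), every closed arc lies in the $\BT(\surfo)$-orbit of $\T_0^\ast$; the arcs $\eta$ with $l_{\T_0}(\eta)$ minimal are precisely the elements $t_j\in\T_0^\ast$ themselves, for which $\widetilde{X}_0(t_j)=T_j[\ZZ]$ and the left-hand side of \eqref{eq:homint} is $\dim\Hom^\ZZ(T_i,T_j)$. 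Since $T_j$ is spherical with $\Hom(T_i,T_j[k])\neq0$ only for $k\in\{0,3\}$ when $i=j$ and (by rigidity of the simples, using that $\T_0$ has no loops or $2$-cycles) never when $i\neq j$, this dimension is $2\delta_{ij}$, which matches $2\Int(t_i,t_j)$ by the definition of the dual triangulation $\T_0^\ast$. This settles the base case; it also handles the closed arcs lying in $\T_0^\ast$ in general.

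For the inductive step, suppose $\eta\notin\T_0^\ast$ and that \eqref{eq:homint} holds for all general closed arcs of strictly smaller $l_{\T_0}$-length. Apply Lemma~\ref{lem:dcp} with $\T=\T_0$: there are arcs $\alpha,\beta\in\CA(\surfo)$ with $\Int_{\surf-\Tri}(\alpha,\beta)=0$, with $\eta=B_\alpha^{\pm1}(\beta)$, and with $\Int(\gamma,\alpha)<\Int(\gamma,\eta)$ and $\Int(\gamma,\beta)<\Int(\gamma,\eta)$ for every $\gamma\in\T_0$. Summing the last inequalities over the $n$ arcs of $\T_0$ gives $l_{\T_0}(\alpha)<l_{\T_0}(\eta)$ and $l_{\T_0}(\beta)<l_{\T_0}(\eta)$, so the inductive hypothesis applies to both $\alpha$ and $\beta$. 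Orient $\alpha,\beta$ so that they share a starting point (which we may arrange since $\Int_{\surf-\Tri}(\alpha,\beta)=0$), so that $\eta=\tau\wedge\sigma$ with $\{\sigma,\tau\}=\{\alpha,\beta\}$ in the appropriate order, and so that their start segments are in the order required by Lemma~\ref{lem:decomp}. The hypothesis $l_{\T_0}(\eta)=l_{\T_0}(\sigma)+l_{\T_0}(\tau)$ needed in that lemma follows from the strict length inequalities together with the additivity $\Int(\gamma,\eta)=\Int(\gamma,\sigma)+\Int(\gamma,\tau)$ visible in the picture of the composition $\tau\wedge\sigma$ (each crossing of $\eta$ with $\gamma\in\T_0$ comes from a crossing of exactly one of $\sigma$, $\tau$). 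Likewise, for each fixed $i$ either $\Int(t_i,\eta)=\Int(t_i,\sigma)+\Int(t_i,\tau)$ holds, in which case Lemma~\ref{lem:decomp} gives $\dim\Hom^\ZZ(T_i,X_\eta)=\dim\Hom^\ZZ(T_i,X_\sigma)+\dim\Hom^\ZZ(T_i,X_\tau)$, and we conclude by the inductive hypothesis; or the additivity fails, and then $\Int(t_i,\eta)=\Int(t_i,\sigma)=\Int(t_i,\tau)$ (the segment $V_0W_0$ of $\eta$ does not cross $t_i$), and the triangle \eqref{eq:SES1} or \eqref{eq:SES} of Proposition~\ref{pp} together with the long exact sequence in $\Hom^\ZZ(T_i,-)$ forces $\dim\Hom^\ZZ(T_i,X_\eta)=\dim\Hom^\ZZ(T_i,X_\sigma)=\dim\Hom^\ZZ(T_i,X_\tau)$ as well, again closing the induction.

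The main obstacle is the bookkeeping in the non-additive case for $t_i$: one must check that the connecting maps in the triangle of Proposition~\ref{pp} applied with $\sigma,\tau$ interact with $\Hom^\ZZ(T_i,-)$ so that the long exact sequence degenerates into short exact pieces of the correct ranks, i.e. that $\Hom^\ZZ(T_i,\varphi(\sigma,\tau))$ is either zero or surjective on the relevant graded pieces. Lemma~\ref{lem:00} is exactly the tool that kills the relevant composition when the start segments are in anticlockwise order, and the geometric input "$V_0W_0$ does not meet $t_i$ $\iff$ additivity fails for $t_i$" is what lets us decide which of the two lemmas to invoke; assembling these cleanly — in particular verifying that the two possibilities for $t_i$ are genuinely exhaustive and that the orientation conventions of Lemma~\ref{lem:decomp} and Lemma~\ref{lem:00} are compatible with the output of Lemma~\ref{lem:dcp} — is where the care is needed. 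Everything else is a routine induction on $l_{\T_0}(\eta)$ plus the identity $2\Int(t_i,\eta)=2\Int(t_i,\sigma)+2\Int(t_i,\tau)$ or $2\Int(t_i,\eta)=2\Int(t_i,\sigma)$ matching the $\Hom$-side on the nose.
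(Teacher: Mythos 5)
There is a genuine gap, and it sits exactly where you flagged the ``main obstacle''. First, the decomposition you take as input is the wrong one for Lemma~\ref{lem:decomp}. Lemma~\ref{lem:dcp} produces $\alpha,\beta$ with $\eta=B_\alpha^{\pm1}(\beta)$, but $B_\alpha^{\pm1}(\beta)$ is a single extension $\tau\wedge\sigma$ only when $\alpha$ and $\beta$ share one endpoint; in the two-triangle situation of Lemma~\ref{lem:dcp} one has $\Int_{\Tri}(\alpha,\beta)=2$, and then (as in Proposition~\ref{pp}) $B_\alpha(\beta)=\overline{\varsigma}\wedge\overline{\alpha}$ with $\varsigma=\beta\wedge\alpha$, i.e.\ a composition of two extensions, so $\eta\neq\tau\wedge\sigma$ for $\{\sigma,\tau\}=\{\alpha,\beta\}$. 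Moreover the hypothesis $l_{\T_0}(\eta)=l_{\T_0}(\alpha)+l_{\T_0}(\beta)$ of Lemma~\ref{lem:decomp} genuinely fails for the arcs produced by Lemma~\ref{lem:dcp}: there $\alpha$ runs along $\eta|_{[0,s_1]}$ and $\beta$ along $\eta|_{[s_0,s_1]}$, so the middle portion of $\eta$ is traversed by both and $l_{\T_0}(\alpha)+l_{\T_0}(\beta)>l_{\T_0}(\eta)$ in general. Second, and more seriously, your treatment of the indices $i$ for which additivity \eqref{eq:int} fails does not work. Failure of \eqref{eq:int} does not force $\Int(t_i,\eta)=\Int(t_i,\sigma)=\Int(t_i,\tau)$ (the defect can be $\tfrac12$, $1$, or more, e.g.\ when the smoothing point is an endpoint of $t_i$ and interior crossings are also cancelled), and the long exact sequence from the triangle in Proposition~\ref{pp} only gives $\dim\Hom^\ZZ(T_i,X_\eta)=\dim\Hom^\ZZ(T_i,X_\sigma)+\dim\Hom^\ZZ(T_i,X_\tau)-2\operatorname{rank}\Hom^\ZZ(T_i,\varphi)$; to conclude anything you must compute the rank of the induced map when it is nonzero, and neither Lemma~\ref{lem:00} (which only kills a composition in the anticlockwise configuration) nor Proposition~\ref{pp} supplies that.

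The paper avoids this entirely by making the decomposition depend on $t_i$: using \cite[Lemma~3.14]{Q} it smooths $\eta$ at a decorating point $Z$ chosen either in a triangle not meeting $t_i$ (case (I)) or at an endpoint $Z$ of $t_i$ along the segment $YZ\subset t_i$ up to the nearest crossing (case (II)); in both cases \eqref{eq:int} and the $l_{\T_0}$-additivity hold by construction, so Lemma~\ref{lem:decomp} applies and the connecting map vanishes. The only configuration where no such $Z$ exists is when $\Int_{\surfo-\Tri}(\eta,t_i)=0$ and $\eta$ lies in the two triangles adjacent to $t_i$; under Assumption~\ref{ass} this forces $\eta$ to be the loop enclosing $t_i$, and there one computes directly $X_\eta=\Cone(T_i\to T_i[3])[-1]$ and checks \eqref{eq:homint} by hand. (Note also that the case $\Int(t_i,\eta)\in\{0,\tfrac12\}$ is not covered by your base case but is quoted from \cite[Proposition~5.9]{Q}.) So your induction scheme is the right general shape, but to repair it you must either switch to the $t_i$-adapted decomposition as above, or actually prove a rank statement for $\Hom^\ZZ(T_i,\varphi(\sigma,\tau))$ in the non-additive case, which is not available from the lemmas you invoke.
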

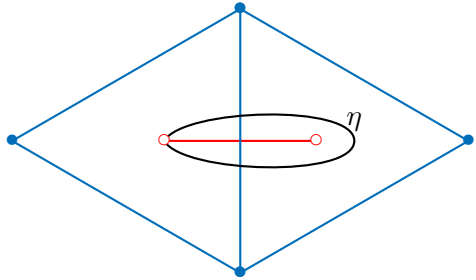
\begin{figure}[hb]\centering
\begin{tikzpicture}[scale=.5]
\draw[red,thick](-2,0)to(2,0);
\draw[NavyBlue,thick](0,3.5)node{$\bullet$}to(0,-3.5)node{$\bullet$}to(-6,0)node{$\bullet$}
to(0,3.5)node{$\bullet$}to(6,0)node{$\bullet$}to(0,-3.5)node{$\bullet$};
\draw[thick](-2,0).. controls +(60:1) and +(90:1) ..(3,0)node[above]{$\eta$}
.. controls +(-90:1) and +(-60:1) ..(-2,0);
\draw(-2,0)node[white]{$\bullet$}node[red]{$\circ$}(2,0)node[white]{$\bullet$}node[red]{$\circ$};
\end{tikzpicture}
\caption{A loop encloses a closed arc}\label{fig:L}
\end{figure}

\begin{proof}
When $\Int(t_i,\eta)<1$ (i.e. is $0$ or $1/2$),
the formula is proved in \cite[Proposition~5.9]{QQ}.
Now we assume that $\Int(t_i,\eta)\geq 1$.
Use induction on $l_0(\eta)$ starting with the trivial cases $l_0(\eta)=1$ (i.e. $\eta=t_j$ for some $j$).
Now suppose that the formula holds for $l_0(\eta)\leq l$, with some $l\geq 1$.
Consider the case $l_0(\eta)=l+1$. Then there are three cases:

\parindent=-0pt\textbf{(I):} $\eta$ intersects a triangle (with decorating point $Z$)
that does not intersect $t_i$.
Applying \cite[Lemma~3.14]{QQ} to decompose $\eta$ into $\sigma$ and $\tau$,
w.r.t. $Z$. Since $Z$ is not an endpoint of $t_i$, \eqref{eq:int} holds. So by Lemma~\ref{lem:decomp}, we have \eqref{eq:dim} for $\sigma$ and $\tau$
(by properly choosing their orientations; same for later use of this lemma). By the induction hypothesis, \eqref{eq:homint} holds for $\sigma,\tau$
and hence holds for $\eta$ too by \eqref{eq:dim}.

\parindent=-0pt\textbf{(II):} $\Int_{\surf-\Tri}(\eta,t_i)\neq0$.
  Let $Z$ be an endpoint of $t_i$, such that the triangle containing $Z$ contains intersections of $\eta$ and $t_i$ in $\surf-\Tri$.
  Choose the closest intersection $Y$ between $\eta$ and $t_i$ from $Z$.
  Applying \cite[Lemma~3.14]{QQ}, w.r.t. $Z$ and the line segment $YZ(\subset t_i)$
  to decompose $\eta$ into $\sigma$ and $\tau$.
  Again they satisfy the condition \eqref{eq:int} in Lemma~\ref{lem:decomp}
  and thus \eqref{eq:dim} holds.
  By the induction hypothesis, \eqref{eq:homint} holds for $\sigma,\tau$ and hence for $\eta$.

\parindent=-0pt\textbf{(III):} Suppose the conditions in \text{(I)} and \text{(II)} both fail, i.e.
\begin{itemize}\item $\Int_{\surf-\Tri}(\eta,t_i)=0$ and
\item $\eta$ is contained in the two triangles of $\T_0$ which intersects $t_i$.
\end{itemize}
Note that these two triangles share exactly one edge by Assumption~\ref{ass}.
Then since $\eta\neq t_i$, we deduce that $\eta$ is a loop encloseing $t_i$ (cf. Figure~\ref{fig:L}).
In this case, $X^0_\eta=\Cone(T_i\to T_i[3])[-1]$.
A direct calculation shows that \eqref{eq:homint} holds.
\end{proof}

\begin{corollary}\label{cor:int}
Under Assumption~\ref{ass},
\begin{gather}\label{eq:homint2x}
\dim\Hom^\ZZ(\widetilde{X}_0(\eta_1),\widetilde{X}_0(\eta_2))=2\Int(\eta_1,\eta_2),
\end{gather}
for any $\eta_i\in\CA(\surfo)$.
\end{corollary}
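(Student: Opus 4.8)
The plan is to reduce \eqref{eq:homint2x} to Proposition~\ref{prop:homint}, which already establishes the special case where the first object is one of the distinguished simples $T_i=\widetilde{X}_0(t_i)$, by exploiting the transitivity of the braid-twist action together with the equivariance \eqref{eq:actions+}.

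Fix $\eta_1,\eta_2\in\CA(\surfo)$. By Proposition~\ref{prop:r} we have $\BT(\surfo)=\BT(\T_0)$ and $\CA(\surfo)=\BT(\surfo)\cdot\T_0^\ast$, so there exist $b\in\BT(\T_0)$ and $t_i\in\T_0^\ast$ with $\eta_1=b(t_i)$. Applying \eqref{eq:actions+} and using $\widetilde{X}_0(t_i)=T_i[\ZZ]$ gives $\widetilde{X}_0(\eta_1)=\iota_0(b)(T_i)$ in $\Sph(\Gamma_0)/[1]$. Since $\iota_0(b)\in\ST(\Gamma_0)$ is an exact auto-equivalence of $\D_{fd}(\Gamma_0)$, and the total graded dimension $\dim\Hom^\bullet$ is unchanged by shifting either argument (hence descends to shift orbits), the adjunction $\Hom^\bullet(\Phi X,Y)\cong\Hom^\bullet(X,\Phi^{-1}Y)$ yields
\[
\dim\Hom^\bullet\bigl(\widetilde{X}_0(\eta_1),\widetilde{X}_0(\eta_2)\bigr)
=\dim\Hom^\bullet\bigl(T_i,\iota_0(b)^{-1}\widetilde{X}_0(\eta_2)\bigr)
=\dim\Hom^\bullet\bigl(T_i,\widetilde{X}_0(b^{-1}(\eta_2))\bigr),
\]
the last step being \eqref{eq:actions+} applied to $b^{-1}$ (as $\iota_0$ is a group homomorphism).

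Now $b^{-1}(\eta_2)\in\CA(\surfo)\subset\GCA(\surfo)$, so Proposition~\ref{prop:homint} applies and gives $\dim\Hom^\bullet\bigl(T_i,\widetilde{X}_0(b^{-1}(\eta_2))\bigr)=2\Int(t_i,b^{-1}(\eta_2))$. Finally, $b$ is represented by a homeomorphism of $\surfo$, and geometric intersection numbers are invariant under $\MCG(\surfo)$ (in each of the variants $\Int$, $\Int_\Tri$, $\Int_{\surf-\Tri}$), so $\Int(t_i,b^{-1}(\eta_2))=\Int(b(t_i),\eta_2)=\Int(\eta_1,\eta_2)$. Concatenating these identities gives \eqref{eq:homint2x}.

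I do not expect a genuine obstacle here, since the substantive work lies in Propositions~\ref{prop:homint} and~\ref{prop:r}; the only points requiring care are the bookkeeping of shift orbits in $\Sph(\Gamma_0)/[1]$ (so that $\dim\Hom^\bullet$ is a well-defined invariant of a pair of orbits), the identification of $\Hom^\bullet$ with the total graded $\Hom^\ZZ$ used in Proposition~\ref{prop:homint}, and the mapping-class-group invariance of the intersection pairing — all of which are routine.
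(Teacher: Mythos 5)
Your proposal is correct and follows essentially the same route as the paper's own proof: write $\eta_1=b(t_i)$ via Proposition~\ref{prop:r}, transfer by the equivariance \eqref{eq:actions+} and the auto-equivalence $\iota_0(b)$ to reduce to $\Hom^\ZZ(T_i,\widetilde{X}_0(b^{-1}(\eta_2)))$, apply Proposition~\ref{prop:homint}, and conclude by invariance of intersection numbers under the mapping class group. The extra bookkeeping you spell out (shift orbits, adjunction, MCG-invariance of $\Int$) is exactly what the paper leaves implicit.
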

\begin{proof}
By Proposition~\ref{prop:r}, there exists $t_i\in\T_0^*$ and $b\in\BT(\T_0)$ such that $\eta_1=b(t_i)$.
Then by \eqref{eq:actions+}, we have $\widetilde{X}_0(b(t_i))=\iota(b)\left(\widetilde{X}_0(t_i)\right)$.
Hence
\[\begin{array}{rcl}
\dim\Hom^\ZZ(\widetilde{X}_0(\eta_1),\widetilde{X}_0(\eta_2))&=&
\dim\Hom^\ZZ(\widetilde{X}_0(t_i),\widetilde{X}_0(b^{-1}(\eta_2)))\\
&=&2\Int(t_i,b^{-1}(\eta_2))\\
&=&2\Int(\eta_1,\eta_2).
\end{array}\]
\end{proof}

\section{Main results}\label{Sec:4}
\subsection{Independence}\label{sec:ind}
We still assume Assumption~\ref{ass} holds in this subsection, i.e. there is an initial triangulation $\T_0$ of $\surfo$ such that any two triangles share at most one edge.
Recall from \cite{QQ}, that two elements $\psi$ and $\psi'$ in $\Aut\D_{fd}(\Gamma_0)$ are \emph{isotopic},
denote by $\psi\sim\psi'$,
if $\psi^{-1}\circ\psi'$ acts trivially on $\Sph(\Gamma_0)$.
Let $$\Aut^\circ\D_{fd}(\Gamma_0)=\Aut\D_{fd}(\Gamma_0)/\sim.$$
By \cite[(2.6)]{QQ}, $\psi\sim\psi'$ is equivalent to the condition that
$\psi^{-1}\circ\psi'$ acts trivially on $\Sim\h_0$.
We will say an element $\varphi$ in $\Aut\D_{fd}(\Gamma_0)$ is null-homotopic
if $\varphi\sim\id$.

Now let $\T$ be an arbitrary triangulation. Keep the notations in Section~\ref{sec:twist}.
Denote by $\h_{\T}$ the canonical heart in $\D_{fd}(\Gamma_\T)$ with simples $\{S_i\}$ corresponding to open arcs in $\T^\ast=\{s_i\}$.
Denote by $\Sph(\Gamma_\T)$ the set of reachable spherical objects.
\begin{definition}
We say two derived equivalences $\phi,\phi'\colon\D_{fd}(\Gamma_0)\to\D_{fd}(\Gamma_\T)$
are isotopic if they only differ by null-homotopies, i.e.
$\phi'=\varphi_1\circ\phi\circ\varphi_0$ for some $\varphi_0\in\Aut\D_{fd}(\Gamma_0)$
and $\varphi_1\in\Aut\D_{fd}(\Gamma_\T)$, which are null-homotopic.
\end{definition}

\begin{proposition}\label{lem:u}
There is a unique exact equivalence $\Phi_\T\colon\D_{fd}(\Gamma_0)\to\D_{fd}(\Gamma_\T)$,
up to isotopy and shifts,
such that it induces a bijection
$$\Phi_{\T}\colon\Sph(\Gamma_0)/[1] \to\Sph(\Gamma_\T)/[1]$$
satisfying the following condition:
\[\bullet\text{
for any $s\in\T^*$, the corresponding simple in $\Sim\h_{\T}$
is in the shift orbit $\Phi_{\T}(\widetilde{X}_0(s))$}.
\]
\end{proposition}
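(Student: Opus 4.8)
The plan is to construct $\Phi_\T$ by chaining together the elementary derived equivalences attached to the flips in $\EGp(\surfo)$ and then to verify that the resulting functor depends only on $\T$, not on the chosen path of flips, up to isotopy and shift. First I would recall that $\EGp(\surfo)$ is connected (as stated in the setup after Figure~\ref{fig:5+}), so there is a finite sequence of forward and backward flips $\T_0 = \T^{(0)}, \T^{(1)}, \ldots, \T^{(k)} = \T$. Each single flip $\T^{(j)}_\gamma{}^\sharp$ (or ${}^\flat$) at an arc $\gamma$ is known to correspond, on the level of Ginzburg dg algebras, to a mutation $\mu_\gamma$, and mutation induces a derived equivalence $\mu_\gamma^*\colon \D_{fd}(\Gamma_{\T^{(j)}}) \to \D_{fd}(\Gamma_{\T^{(j+1)}})$ that is a composition of the relevant spherical twist with the canonical identification; see \cite{KY} and \cite[\S 6]{Q}. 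Composing these yields a candidate $\Phi_\T$. Since each mutation functor is compatible with the bijection $\widetilde{X}_\T$ between $\CA(\surfo)$ and $\Sph(\Gamma_\T)/[1]$ — which follows from the string-model description in Appendix~\ref{app:A} together with \eqref{eq:X} — the composite $\Phi_\T$ sends $\widetilde{X}_0(s)$, for $s\in\T^*$, into the shift orbit of the simple of $\h_\T$ corresponding to $s$. This gives existence.

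Next I would address uniqueness. Suppose $\Phi_\T$ and $\Phi_\T'$ are two exact equivalences $\D_{fd}(\Gamma_0)\to\D_{fd}(\Gamma_\T)$ both satisfying the bullet condition. Then $\psi := (\Phi_\T')^{-1}\circ\Phi_\T$ is an autoequivalence of $\D_{fd}(\Gamma_0)$ that, for every $s\in\T^*$, fixes the shift orbit $\widetilde{X}_0(s)$. Now $\T^*$ is itself a collection of arcs whose images $\{\widetilde{X}_0(s)\}$ form (up to shift) the simples of a heart in $\D_{fd}(\Gamma_0)$ — namely $\Phi_\T^{-1}(\h_\T)$ — and by Proposition~\ref{prop:r}, $\CA(\surfo) = \BT(\surfo)\cdot\T^*$, so via \eqref{eq:actions+} the orbit $\ST(\Gamma_0)\cdot\{\widetilde{X}_0(s)\}$ is all of $\Sph(\Gamma_0)/[1]$. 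Since $\psi$ commutes with the $\ST(\Gamma_0)$-action (being an autoequivalence, it conjugates spherical twists to spherical twists of the image objects, and it fixes a spanning set of those objects), $\psi$ fixes every shift orbit in $\Sph(\Gamma_0)/[1]$. In particular it fixes $\Sim\h_0$ up to shift; after composing with a global shift we may assume it fixes $\Sim\h_0$ on the nose, and then by \cite[(2.6)]{Q} it is null-homotopic. Hence $\Phi_\T\sim\Phi_\T'$ up to shift, which is exactly isotopy in the sense of the Definition preceding the proposition.

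The main obstacle is the well-definedness of $\Phi_\T$: a priori the composite of mutation equivalences along a path from $\T_0$ to $\T$ could depend on the path. I expect to handle this in one of two ways. The cleaner route is to observe that \emph{any} two such composites satisfy the same bullet condition — both send $\widetilde{X}_0(s)$ to the simple orbit indexed by $s$, since that statement is purely combinatorial/topological and path-independent by the compatibility of each flip-equivalence with the arc-to-object dictionary — and then invoke the uniqueness argument of the previous paragraph to conclude they agree up to isotopy and shift. This is why I would state and prove existence and uniqueness together: uniqueness does the work of showing the construction is canonical. The alternative, more laborious route would be to check path-independence directly by verifying the mutation equivalences satisfy the braid/pentagon relations of the exchange graph, but that is unnecessary once the uniqueness statement is in hand. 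A secondary technical point is to make sure the $\ST(\Gamma_0)$-equivariance of $\psi$ is correctly justified: this rests on the fact that for an autoequivalence $\psi$ and spherical object $S$, one has $\psi\circ\phi_S\circ\psi^{-1} = \phi_{\psi(S)}$, a standard property from \cite{ST}, so that fixing the objects $\widetilde{X}_0(s)$ up to shift forces $\psi$ to centralize the generators of $\ST(\Gamma_0)$ modulo isotopy.
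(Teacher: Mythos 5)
Your uniqueness argument is sound in outline and, modulo one small omission, matches what the paper needs: you pass through the equivariance $\psi\circ\phi_S\circ\psi^{-1}=\phi_{\psi(S)}$ and the fact that $\Sph(\Gamma_0)/[1]=\ST(\Gamma_0)\cdot\widetilde{X}_0(\T^\ast)$ (via Proposition~\ref{prop:r} and \eqref{eq:actions+}), whereas the paper argues more directly; both routes work. The omission is the step ``after composing with a global shift we may assume it fixes $\Sim\h_0$ on the nose'': a priori each simple is fixed only up to its own shift $[m_i]$, and you must argue that all $m_i$ coincide (the paper does this by computing $\Hom^{\ZZ}$ between simples, nonzero for adjacent vertices of the connected quiver) before a single global shift can absorb them. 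Your observation that uniqueness renders the construction path-independent is also correct and is implicitly how the paper treats that issue.

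The genuine gap is in the existence half. The whole content of the paper's proof is the induction step along a single flip: if $\T'$ is the forward flip of $\T$ at $\gamma_j$, one must verify that the Keller--Yang equivalence $\Phi\colon\D(\Gamma_\T)\to\D(\Gamma_{\T'})$ with $\Phi\bigl(\tilt{(\h_\T)}{\sharp}{S_j}\bigr)=\h_{\T'}$ transports the bullet condition for $\T$ to the one for $\T'$. This needs two explicit inputs your proposal never touches: the tilting formula for simples (\cite[Proposition~5.2]{KQ}, \cite{KY}), namely $\Phi^{-1}(S_i')=\phi^{-1}_{S_j}(S_i)$ when there are arrows $i\to j$ (and $S_j[1]$, resp.\ $S_i$, otherwise), and the topological fact that under the flip the dual arcs change by the braid twist, $s_i'=B_{s_j}(s_i)$ in exactly the same cases; these are matched using \eqref{eq:actions+} in the form $\phi^{-1}_{\widetilde{X}_0(s_j)}(\widetilde{X}_0(s_i))=\widetilde{X}_0(\bt{s_j}(s_i))$, which is available only because $\T_0$ satisfies Assumption~\ref{ass} and \cite[\S~6]{Q} applies to $\T_0$. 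In your write-up this verification is replaced by the assertion that ``each mutation functor is compatible with the bijection $\widetilde{X}_\T$ between $\CA(\surfo)$ and $\Sph(\Gamma_\T)/[1]$,'' justified by the string model and \eqref{eq:X}. That is circular at this stage: for an arbitrary $\T$ the map $\widetilde{X}_\T$ is not yet known to be a bijection onto $\Sph(\Gamma_\T)/[1]$, let alone compatible with flip equivalences --- that is precisely Proposition~\ref{thm:ind}, proved afterwards and using Proposition~\ref{lem:u} as input; Appendix~\ref{app:A} only builds $X_\eta$ for a fixed triangulation and computes morphisms, and says nothing about how a flip equivalence acts on these objects. (A smaller inaccuracy: the mutation equivalence is not ``a spherical twist composed with the canonical identification''; only its effect on simples is described by a twist, via the tilting formula.) So the key computation carrying the existence statement is missing from your proposal.
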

\begin{proof}
First we show the uniqueness.
Suppose that there are two such derived equivalences $\Phi_{\T}$ and $\Phi'_{\T}$.
Then we have $\Phi_{\T}^{-1}\circ\Phi'_{\T}(T_i)=T_i[m_i]$ for any simple $T_i$ in
the canonical heart $\h_0$.
By calculating the $\Hom^\ZZ$, we deduce that all $m_i$ should coincide, i.e.
$\Phi_{\T}^{-1}\circ\Phi'_{\T}\circ[-m]$ preserves $\Sim\h_0$ and hence $\Sph(\Gamma_0)$.
In other words, $\Phi_{\T}^{-1}\circ\Phi'_{\T}\circ[-m]$ is null-homotopic in $\Aut\D_{fd}(\Gamma_0)$,
as required.

Now we prove the existence by induction, on the minimal number of flips from $\T_0$ to $\T$,
starting from the trivial case.
Now suppose that $\T$ admits a required derived equivalence $\Phi_{\T}$, i.e.
\begin{gather}\label{eq:T}
    \Phi_{\T}(\widetilde{X}_0({s_i}))=S_i[\ZZ].
\end{gather}
Then we only need to show that there exists a required derived equivalence $\Phi_{\T'}$ for any flip $\T'$ of $\T$
in $\surfo$.

Without loss of generality, suppose that $\T'$ is the forward flip of $\T$ w.r.t. an arc $\gamma_j$
and let $s_j$ be the dual arc of $\gamma_j$ in $\T^*$.
By \cite{KY},
there is an exact equivalence $\Phi\colon\D(\Gamma_{\T})\to\D(\Gamma_{\T'})$ satisfying
$$\Phi\left( \tilt{(\h_\T)}{\sharp}{S_j} \right)=\h_{\T'},$$
where $\tilt{(\h_\T)}{\sharp}{S_j}$ is the simple forward tilt of $\h_\T$,
w.r.t. $S_j$ (cf. \cite[\S~5]{KQ}).

Let $(\T')^*$ consist of closed arcs $s_i'$ and $\Sim\h_{\T'}$ consist of the corresponding simples $S_i'$. By the tilting formula in \cite[Proposition~5.2]{KQ}, we have
  \[
    \Phi^{-1}(S_i')=\begin{cases}
      \phi^{-1}_{S_j}(S_i), &\text{if there are arrows from $i$ to $j$ in $Q_{\T}$},\\
      S_{j}[1],&\text{if $i=j$,}\\
      S_{i}, &\text{otherwise}.\\
    \end{cases}
  \]
On the other hand, note that the indexing of $s_i'$ is induced by the indexing of $s_i$ via the Whitehead move (cf. \cite[Figure~10]{QQ}). It is straightforward to see that
$$
    \quad\quad\;\; s_i'=\begin{cases}
    B_{s_j}(s_i),& \text{if there are arrows from $i$ to $j$ in $Q_{\T}$},\\
    s_i,& \text{otherwise}.
    \end{cases}
$$
Taking $b=\bt{s_j}$ with $\iota_0(b)=\phi^{-1}_{\widetilde{X}_0(s_j)}$ and $\eta=s_i$,
\eqref{eq:actions+} becomes
\begin{gather}\label{eq:35}
    \phi^{-1}_{ \widetilde{X}_0({s_j}) }  (\widetilde{X}_0({s_i})  )
    = \widetilde{X}_0(  \bt{s_j}(s_i) ).
\end{gather}
Then by \eqref{eq:T}, we have
\begin{gather}\begin{array}{rl}
    &\Phi^{-1}(S_i'[\ZZ])=\phi^{-1}_{S_j}(S_i)[\ZZ]\\
    =&\phi^{-1}_{ \Phi_{\T}(\widetilde{X}_0({s_j})) }  \left(\Phi_{\T}(\widetilde{X}_0({s_i}))\right)\\
    =&\Phi_{\T}  \left(  \phi^{-1}_{ \widetilde{X}_0({s_j}) }  (\widetilde{X}_0({s_i})  ) \right)\\
    =&\Phi_{\T}  \left(   \widetilde{X}_0(  \Bt{s_j}(s_i)  ) \right)\\
    =&\Phi_{\T} \left(   \widetilde{X}_0(  s_i'  ) \right) .
\end{array}\end{gather}
if there are arrows from $i$ to $j$ in $Q_{\T}$.
Note that for other $i$ the equation above also holds automatically.
Thus $S_i'[\ZZ]=\Phi\circ\Phi_{\T}(\widetilde{X}_0({s_i'}))$
and $\Phi_{\T'}=\Phi_{\T'}\circ\Phi_{\T}$ is the required equivalence.
\end{proof}

Recall that there is a bijection
$\widetilde{X}_0\colon \CA(\surfo)\rightarrow \Sph(\Gamma_{0})/[1]$
and we proceed to discuss $\widetilde{X}_\T.$

\begin{proposition}\label{thm:ind}
$\widetilde{X}_\T$ induces a bijection
$\widetilde{X}_\T\colon\CA(\surfo)\rightarrow \Sph(\Gamma_{\T})/[1]$,
that fits into the following commutative diagram
\begin{gather}\label{eq:diaa}\xymatrix{
    & \CA(\surfo) \ar[dl]_{\widetilde{X}_0} \ar[dr]^{\widetilde{X}_{\T}}\\
    \Sph(\Gamma_0)/[1] \ar[rr]^{\Phi_{\T}} && \Sph(\Gamma_\T)/[1],
}\end{gather}
where $\Phi_{\T}$ is the bijection in Proposition~\ref{lem:u}.
\end{proposition}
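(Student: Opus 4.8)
The plan is to exhibit the bijection $\widetilde{X}_\T$ directly and to read off commutativity of \eqref{eq:diaa} from the compatibility already established between the string model and the derived equivalences $\Phi_\T$. Recall from Proposition~\ref{lem:u} that $\Phi_\T$ is constructed so that it sends each shift orbit $\widetilde{X}_0(s_i)$ (for $s_i\in\T^\ast$) to the shift orbit of the simple $S_i\in\Sim\h_\T$; this is exactly the ``seed'' case of the diagram. The key point is that the map $\widetilde{X}_\T\colon\CA(\surfo)\to\per\ee_\T/[1]$ from \eqref{eq:X} is defined purely topologically (via the objects $X_\eta$ built from arcs $\eta$), and that its construction in Appendix~\ref{app:A} is functorial with respect to the braid group action on one side and the spherical twist action on the other, just as recorded in \eqref{eq:actions+} for $\T_0$. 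So the first step is to establish the analogue of \eqref{eq:actions+} for an arbitrary triangulation $\T$: there is an isomorphism $\iota_\T\colon\BT(\T)\to\ST(\Gamma_\T)$ sending $\Bt{s_i}\mapsto\phi_{S_i}$, and $\widetilde{X}_\T(b(\eta))=\iota_\T(b)(\widetilde{X}_\T(\eta))$ for all $b\in\BT(\surfo)$ and $\eta\in\CA(\surfo)$. Here I would use $\BT(\surfo)=\BT(\T)$ from Proposition~\ref{prop:r} together with the general string-model computations of Appendix~\ref{app:A}, which do not depend on Assumption~\ref{ass}.

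Given that, I would prove the proposition as follows. By Proposition~\ref{prop:r} we have $\CA(\surfo)=\BT(\surfo)\cdot\T^\ast$, so every closed arc is $b(s_i)$ for some $b\in\BT(\T)=\BT(\surfo)$ and some $s_i\in\T^\ast$. Define a candidate map by $\Psi:=\Phi_\T\circ\widetilde{X}_0\colon\CA(\surfo)\to\Sph(\Gamma_\T)/[1]$; since $\widetilde{X}_0$ is a bijection onto $\Sph(\Gamma_0)/[1]$ and $\Phi_\T$ is a bijection $\Sph(\Gamma_0)/[1]\to\Sph(\Gamma_\T)/[1]$, the composite $\Psi$ is automatically a bijection. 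It remains to identify $\Psi$ with $\widetilde{X}_\T$. For $\eta=s_i\in\T^\ast$ this holds by the defining property of $\Phi_\T$ in Proposition~\ref{lem:u}, since $\widetilde{X}_\T(s_i)=S_i[\ZZ]$ by construction of the dual triangulation and the string model, and $\Phi_\T(\widetilde{X}_0(s_i))=S_i[\ZZ]$ by that proposition. For general $\eta=b(s_i)$, apply \eqref{eq:actions+} on the $\Gamma_0$-side and its $\Gamma_\T$-analogue on the other side, together with the intertwining $\Phi_\T\circ\iota_0(b)\sim\iota_\T(b)\circ\Phi_\T$ (both act as $\phi_{S_j}$-words after transport, and agree on $\Sim\h_\T$, hence agree up to null-homotopy): then
\[
\Psi(b(s_i))=\Phi_\T\bigl(\iota_0(b)(\widetilde{X}_0(s_i))\bigr)
=\iota_\T(b)\bigl(\Phi_\T(\widetilde{X}_0(s_i))\bigr)
=\iota_\T(b)(\widetilde{X}_\T(s_i))=\widetilde{X}_\T(b(s_i)).
\]
Thus $\widetilde{X}_\T=\Phi_\T\circ\widetilde{X}_0$ as maps on $\CA(\surfo)$, which is precisely the commutativity of \eqref{eq:diaa}, and $\widetilde{X}_\T$ is a bijection.

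The main obstacle I anticipate is the first step: establishing that $\widetilde{X}_\T$ intertwines the braid and spherical-twist actions for an \emph{arbitrary} $\T$, including ones with double arrows in $Q_\T$ (where Assumption~\ref{ass} fails). The subtlety is that $\widetilde{X}_\T$ is defined via the string model of Appendix~\ref{app:A}, so one must check that the braid twist $\Bt{s_i}$ acts on these string objects exactly as the twist functor $\phi_{S_i}$; this is where Proposition~\ref{pp} (the realization of $B_\sigma(\tau)$ via a non-split triangle built from the morphisms $\varphi(\sigma,\tau)$) does the real work, since it must be applied in the presence of arcs $\sigma,\tau$ sharing both endpoints—the $\Int_\Tri=2$ case—which is precisely the phenomenon produced by double arrows. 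A secondary, more routine, point is verifying that the two a priori different twist-group elements $\Phi_\T\circ\iota_0(b)\circ\Phi_\T^{-1}$ and $\iota_\T(b)$ coincide in $\Aut^\circ\D_{fd}(\Gamma_\T)$; this follows because both send $\Sim\h_\T$ to the same objects (being generated by the same $\phi_{S_i}$ after Whitehead-move bookkeeping) and by \cite[(2.6)]{Q} agreement on simples implies agreement up to null-homotopy.
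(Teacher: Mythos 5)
Your first move is the same as the paper's (reduce everything to the identity $\widetilde{X}_\T=\Phi_\T\circ\widetilde{X}_0$, bijectivity being free since $\widetilde{X}_0$ and $\Phi_\T$ are bijections), but the way you propose to prove that identity has a genuine gap: your ``first step'' — full braid-equivariance $\widetilde{X}_\T(b(\eta))=\iota_\T(b)(\widetilde{X}_\T(\eta))$ for an arbitrary triangulation $\T$ — is not available at this point and cannot be extracted from Appendix~\ref{app:A} plus Proposition~\ref{pp} alone. The triangle in Proposition~\ref{pp} exhibits $X_{B_\sigma(\tau)}$ as the cone of a map built from one or two copies of shifts of $X_\sigma$, but to conclude that this cone is the spherical twist $\phi_{X_\sigma}(X_\tau)$ one must know that $\dim\Hom^\ZZ(X_\sigma,X_\tau)$ is exactly $1$ (resp.\ $2$), i.e.\ the intersection--dimension formula for $\Gamma_\T$: the twist is the cone over the full evaluation $X_\sigma\otimes\Hom^\bullet(X_\sigma,X_\tau)\to X_\tau$, and without the dimension count the exhibited triangle need not be the twist triangle. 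That formula for arbitrary $\T$ is Theorem~\ref{thm:1}, which in the paper is deduced \emph{from} Proposition~\ref{thm:ind}; likewise the equivariance statement you want is only established later as Proposition~\ref{pp:actions+}, again using Theorem~\ref{thm:1}. So as written your argument is circular (you even flag this as the ``main obstacle'', but the fix you suggest — Proposition~\ref{pp} handling the $\Int_\Tri=2$ case — does not supply the missing Hom-dimension control, and in any case equivariance only for pairs with $\Int_{\surf-\Tri}=0$ would not immediately give the action of arbitrary words in $\BT(\surfo)$ on arbitrary arcs).

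The paper avoids this by a different induction: it inducts on $l_\T(\eta)=\Int(\eta,\T)$, with base case $\eta\in\T^\ast$ given by Proposition~\ref{lem:u}. For the inductive step it uses the topological decomposition Lemma~\ref{lem:dcp} to write $\eta=B_\alpha^{\pm 1}(\beta)$ with $\Int_{\surf-\Tri}(\alpha,\beta)=0$ and $l_\T(\alpha),l_\T(\beta)<l_\T(\eta)$; the inductive hypothesis identifies $\widetilde{X}_\T(\alpha),\widetilde{X}_\T(\beta)$ with $\Phi_\T\circ\widetilde{X}_0(\alpha),\Phi_\T\circ\widetilde{X}_0(\beta)$, so the Hom-dimension formula known on the $\T_0$-side (Corollary~\ref{cor:int}, under Assumption~\ref{ass}) transports across $\Phi_\T$; only then does Proposition~\ref{pp} identify both $\widetilde{X}_0(\eta)$ and $\widetilde{X}_\T(\eta)$ with the corresponding spherical twists, and commutation of $\Phi_\T$ with twist functors finishes the step. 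If you want to salvage your route, you would have to prove the equivariance for $\T$ by essentially this induction anyway, so the decomposition via Lemma~\ref{lem:dcp} and the transported Corollary~\ref{cor:int} are the missing ingredients you need to add.
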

\begin{proof}
Since $\widetilde{X}_0$ and $\Phi_{\T}$ are bijections, we only need to prove
\begin{gather}\label{eq:cc}
    \widetilde{X}_{\T}(\eta)=\Phi_\T\circ\widetilde{X}_0(\eta)
\end{gather}
Use induction on $l_0(\eta)=\Int(\eta,\T)$.
The starting step $(l_0(\eta)=1)$ is covered by Proposition~\ref{lem:u}.
Now let us deal the inductive step for some $\eta$ with $l_0(\eta)>1$ while assuming that \eqref{eq:cc} holds for any
$\eta'$ with $l_0(\eta')< l_0(\eta)$.
By Lemma~\ref{lem:dcp}, there are $\alpha$ and $\beta$ with the corresponding conditions there.
Without loss of generality, assume that $\eta=B_\alpha(\beta)$. By inductive assumption, we have
\begin{gather}\label{eq:and}
        \widetilde{X}_{\T}(\alpha)=\Phi_\T\circ\widetilde{X}_0(\alpha) \quad\text{and}\quad
        \widetilde{X}_{\T}(\beta)=\Phi_\T\circ\widetilde{X}_0(\beta).
\end{gather}
Since by Corollary~\ref{cor:int}, we have
\begin{gather}\label{eq:notice}
\dim\Hom^\ZZ(\widetilde{X}_{0}(\alpha),\widetilde{X}_{0}(\beta))=2\Int(\alpha,\beta),
\end{gather}
then the triangles in Proposition~\ref{pp} implies that
\begin{gather}\label{eq:notice2}
\widetilde{X}_{0}(\eta)=
    \phi_{ \widetilde{X}_{0}(\alpha) }
    \left( \widetilde{X}_{0}(\beta) \right).
\end{gather}
Notice that by \eqref{eq:and}, the equalities \eqref{eq:notice}
and \eqref{eq:notice2} also hold for $\widetilde{X}_\T$.
Hence
\[\begin{array}{rll}
    \widetilde{X}_{\T}(\eta)&=&
    \phi_{ \widetilde{X}_{\T}(\alpha) } \left( \widetilde{X}_{\T}(\beta) \right)\\
    &=&\phi_{ \Phi_{\T}\left(\widetilde{X}_0(\alpha)\right) }\left( \Phi_{\T}\circ\widetilde{X}_0(\beta) \right)\\
    &=&\Phi_{\T}\left(  \phi_{ \widetilde{X}_0(\alpha) }\left( \widetilde{X}_0(\beta) \right)  \right)\\
    &=&\Phi_{\T}\circ\widetilde{X}_0(\eta)
\end{array}\]
as required.
\end{proof}

\begin{remark}\label{rem}
By the proposition above,
one can identify all sets $\Sph(\Gamma_\T)$ of reachable spherical objects,
for any $\T$ in $\EGp(\surfo)$,
using the canonical derived equivalences in Proposition~\ref{lem:u} between $\D_{fd}(\Gamma_\T)$.
Hence, such equivalences also allow us to identify all the spherical twist groups $\ST(\Gamma_\T)$.
Note that here we will consider $\ST(\Gamma_\T)$ as a subgroup of $\Aut^\circ\D_{fd}(\Gamma_\T)$.
\end{remark}

\subsection{The first formula revisit}
\begin{theorem}\cite[Conjecture~10.5]{QQ}\label{thm:1}
For any triangulation $\T$ and  $\eta_i\in\CA(\surfo)$, we have
\begin{gather}\label{eq:homint2}
\dim\Hom^\ZZ(\widetilde{X}_\T(\eta_1),\widetilde{X}_\T(\eta_2))=2\Int(\eta_1,\eta_2).
\end{gather}
\end{theorem}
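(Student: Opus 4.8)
The key observation is that the hard work has already been done for the initial triangulation $\T_0$ in Corollary~\ref{cor:int}, and Proposition~\ref{thm:ind} gives us a dictionary translating statements about $\widetilde{X}_0$ into statements about $\widetilde{X}_\T$. So the plan is to reduce \eqref{eq:homint2} to \eqref{eq:homint2x} via the commutative diagram \eqref{eq:diaa}. First I would take $\eta_1,\eta_2\in\CA(\surfo)$ and apply Proposition~\ref{thm:ind} to write $\widetilde{X}_\T(\eta_k)=\Phi_\T\circ\widetilde{X}_0(\eta_k)$ for $k=1,2$, where $\Phi_\T\colon\D_{fd}(\Gamma_0)\to\D_{fd}(\Gamma_\T)$ is the exact equivalence from Proposition~\ref{lem:u}.

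Next, since $\Phi_\T$ is an exact (triangulated) equivalence, it induces isomorphisms on all graded morphism spaces; in particular
\[
\dim\Hom^\ZZ\!\big(\widetilde{X}_\T(\eta_1),\widetilde{X}_\T(\eta_2)\big)
=\dim\Hom^\ZZ\!\big(\Phi_\T\widetilde{X}_0(\eta_1),\Phi_\T\widetilde{X}_0(\eta_2)\big)
=\dim\Hom^\ZZ\!\big(\widetilde{X}_0(\eta_1),\widetilde{X}_0(\eta_2)\big).
\]
Strictly speaking $\widetilde{X}_0(\eta_k)$ and $\widetilde{X}_\T(\eta_k)$ are shift orbits rather than objects, but $\Hom^\ZZ$ is independent of the choice of representative in a shift orbit, so the displayed equality is well defined; and $\Phi_\T$ is only determined up to isotopy and shift, neither of which affects $\dim\Hom^\ZZ$, so there is no ambiguity. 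Finally I would invoke Corollary~\ref{cor:int} (valid under Assumption~\ref{ass}, which $\T_0$ satisfies) to conclude that the right-hand side equals $2\Int(\eta_1,\eta_2)$, which gives \eqref{eq:homint2}.

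The only genuine subtlety — and the step I would be most careful about — is the passage from "$\Phi_\T$ exists up to isotopy and shift" to an honest equality of dimensions: one must check that $\dim\Hom^\ZZ$ between shift orbits is the right invariant (so that null-homotopic automorphisms $\varphi_0\in\Aut\D_{fd}(\Gamma_0)$, $\varphi_1\in\Aut\D_{fd}(\Gamma_\T)$ do not change it). This is immediate from the definition of isotopy (such automorphisms act trivially on $\Sph(\Gamma_\bullet)/[1]$, hence send each shift orbit to itself) together with the fact that any exact equivalence is in particular a bijection on morphism spaces. Everything else is a formal chain of equalities, so the theorem follows essentially without further computation.
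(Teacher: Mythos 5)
Your reduction is exactly the paper's argument for the main case, but it proves strictly less than the theorem claims: everything you use is conditional on Assumption~\ref{ass}. The initial triangulation $\T_0$ with no double arrows in $Q_{\T_0}$, and with it Corollary~\ref{cor:int}, Proposition~\ref{lem:u} and Proposition~\ref{thm:ind}, only exist when $\surf$ admits such a triangulation. There are two surfaces for which no triangulation satisfies this condition — the annulus with exactly one marked point on each boundary component, and the torus with one boundary component and one marked point — and for these your chain of equalities never gets started, since there is no $\T_0$ to anchor $\widetilde{X}_0$, $\Phi_\T$ or the formula \eqref{eq:homint2x}. The theorem, however, is asserted for every unpunctured marked surface and every triangulation $\T$, so these exceptional cases are a genuine gap, not a technicality.

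The paper closes this gap by a separate argument: for the two exceptional surfaces one passes to a higher-rank surface (e.g.\ the one obtained by adding a marked point), where Assumption~\ref{ass} does hold and the formula is available, and then deduces the formula for $\surf$ by the method of \cite[\S~7]{Q}. Your handling of the generic case is fine — the point you flag about shift orbits and isotopy classes not affecting $\dim\Hom^\ZZ$ is the right thing to check, and is how the paper uses Remark~\ref{rem} — but to have a complete proof you must either add the reduction-to-a-larger-surface step for the two exceptional surfaces, or explicitly restrict the statement to surfaces satisfying Assumption~\ref{ass}.
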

\begin{proof}
If Assumption~\ref{ass} holds, the theorem is equivalent to Corollary~\ref{cor:int}
since one can identify all bijections $\widetilde{X}_\T$ as in Remark~\ref{rem}.
For the special cases that $\surf$ does not satisfy Assumption~\ref{ass}, that is, \begin{itemize}
\item either $\surf$ is an annulus with one marked point in each boundary component,
\item or $\surf$ is a torus with one boundary component and one marked point,
\end{itemize}
one can apply the same method in \cite[\S~7]{QQ}.
More precisely, the formula holds for a higher rank surface
(e.g. the surface obtained from $\surf$ by adding a marked point)
and hence also holds for $\surf$.
\end{proof}


\subsection{Independence revisit}
We use intersection formula \eqref{eq:homint2}
to prove \eqref{eq:35} for $\T$.
\begin{proposition}\label{pp:actions+}
For any $\sigma,\tau\in\CA(\surfo)$ with $\Int_{\surf-\Tri}(\sigma,\tau)=0$, we have
\begin{gather}
\label{eq:actions-}
    \widetilde{X}_\T\left( B_{\tau}^{\varepsilon}(\sigma) \right)
    =\phi^{\varepsilon}_{ \widetilde{X}_\T(\tau) }  \left( \widetilde{X}_\T(\sigma) \right),
    \quad \varepsilon\in\{\pm1\}.
\end{gather}
\end{proposition}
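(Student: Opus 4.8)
The plan is to bootstrap from the special case already established and push it along flips, exactly in the spirit of the proof of Proposition~\ref{thm:ind}. Recall that \eqref{eq:35} is precisely \eqref{eq:actions-} for the special generator $b=\bt{s_j}$ of $\BT(\T)$ relative to the distinguished triangulation $\T_0$; and by Proposition~\ref{prop:r} we have $\BT(\surfo)=\BT(\T)$, so it suffices to verify \eqref{eq:actions-} for $\tau\in\CA(\surfo)$ ranging over a set that, together with the relation, generates everything. The key point that makes the general $\tau$ accessible is that the braid twist $B_\tau$ and the spherical twist $\phi_{\widetilde{X}_\T(\tau)}$ are both determined by their action on a spanning set of arcs (resp.\ spherical objects), together with Corollary~\ref{cor:int}/Theorem~\ref{thm:1}: the intersection formula \eqref{eq:homint2} now available for $\widetilde{X}_\T$ tells us exactly the dimensions of the relevant $\Hom$-spaces, and the non-split triangles of Proposition~\ref{pp} then pin down the spherical twist unambiguously.

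Concretely, I would argue as follows. Fix $\sigma,\tau$ with $\Int_{\surfo-\Tri}(\sigma,\tau)=0$, so that $\eta:=B_\tau^{\varepsilon}(\sigma)$ is obtained from $\sigma$ and $\tau$ by a single braid-twist move with $\Int_\Tri(\sigma,\tau)\in\{1,2\}$. By Theorem~\ref{thm:1}, $\dim\Hom^\ZZ(\widetilde{X}_\T(\tau),\widetilde{X}_\T(\sigma))=2\Int(\tau,\sigma)$, which under the hypothesis equals $\Int_\Tri(\tau,\sigma)\in\{1,2\}$. Feeding $\sigma,\tau$ (suitably oriented) into Proposition~\ref{pp} produces a non-split triangle whose outer term is $\widetilde{X}_\T(\eta)$ up to shift, and whose middle map is built from the $\varphi$'s; comparing this with the defining triangle of the spherical twist $\phi_{\widetilde{X}_\T(\tau)}$ acting on $\widetilde{X}_\T(\sigma)$ — using that the relevant $\Hom$-space has exactly the dimension dictated by the intersection number, so the map in the triangle of Proposition~\ref{pp} \emph{is} (up to the $\widetilde{X}_\T$-identification) the canonical evaluation map — forces
\[
    \widetilde{X}_\T(B_\tau^{\varepsilon}(\sigma)) = \phi^{\varepsilon}_{\widetilde{X}_\T(\tau)}(\widetilde{X}_\T(\sigma)),
\]
which is \eqref{eq:actions-}. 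This is the same mechanism used to deduce \eqref{eq:notice2} from \eqref{eq:notice} in the proof of Proposition~\ref{thm:ind}, only now carried out directly at the level of $\T$ rather than transported from $\T_0$ via $\Phi_\T$.

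For the bookkeeping I would set things up via the commutative square of Proposition~\ref{thm:ind}: since $\widetilde{X}_\T=\Phi_\T\circ\widetilde{X}_0$ and $\Phi_\T$ is an exact equivalence (so intertwines spherical twists, $\phi_{\Phi_\T(X)}=\Phi_\T\circ\phi_X\circ\Phi_\T^{-1}$), equation \eqref{eq:actions-} for $\widetilde{X}_\T$ is equivalent to the same equation for $\widetilde{X}_0$, namely \eqref{eq:actions+} specialized appropriately — and \eqref{eq:actions+} is already known for all of $\BT(\surfo)$ acting on all of $\CA(\surfo)$. In other words, the cleanest route is: \textbf{(i)} observe $\widetilde{X}_\T=\Phi_\T\circ\widetilde{X}_0$ (Proposition~\ref{thm:ind}); \textbf{(ii)} observe $\Phi_\T$ is exact, hence conjugates $\phi_{\widetilde{X}_0(\tau)}$ to $\phi_{\widetilde{X}_\T(\tau)}$; \textbf{(iii)} apply \eqref{eq:actions+} on the $\Gamma_0$ side and transport across $\Phi_\T$.

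The main obstacle is \textbf{step (ii)}, or rather making sure it is legitimate: $\Phi_\T$ is only defined up to isotopy and shift (Proposition~\ref{lem:u}), so one must check that the isotopy ambiguity — null-homotopic automorphisms on both ends, and an overall shift — does not spoil the conjugation identity $\phi_{\Phi_\T(X)}\simeq\Phi_\T\circ\phi_X\circ\Phi_\T^{-1}$ at the level of the shift-orbit statement \eqref{eq:actions-}. Since null-homotopic equivalences act trivially on $\Sph(\Gamma_\T)$ by definition, and an overall shift commutes with every spherical twist and with $\widetilde{X}_\T$ (which records only the shift orbit $X[\ZZ]$), this is a routine but necessary verification; once it is in place, the proposition follows immediately. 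If one instead prefers the direct argument via Proposition~\ref{pp}, the analogous subtlety is checking that the middle map appearing in that triangle genuinely coincides, under $\widetilde{X}_\T$, with the canonical map defining the spherical twist — which is where Corollary~\ref{cor:comp} and the linear-independence clause of Proposition~\ref{pp} do the work.
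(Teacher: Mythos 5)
Your ``direct'' argument (second paragraph) is essentially the paper's own proof: by Theorem~\ref{thm:1} the space $\Hom^\ZZ(\widetilde{X}_\T(\tau),\widetilde{X}_\T(\sigma))$ has dimension $2\Int(\tau,\sigma)=\Int_{\Tri}(\tau,\sigma)\in\{1,2\}$, so the one or two linearly independent morphisms $\varphi$ appearing in the triangle \eqref{eq:SES1} or \eqref{eq:SES} of Proposition~\ref{pp} span that $\Hom$-space; hence that triangle is (up to shift) the defining triangle of the spherical twist, and its outer term $X_{B_\tau^{\varepsilon}(\sigma)}$ must be $\phi^{\varepsilon}_{\widetilde{X}_\T(\tau)}(\widetilde{X}_\T(\sigma))$, which is \eqref{eq:actions-}. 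Had you presented only this, the proposal would coincide with the paper's proof.

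The route you single out as ``cleanest'', steps (i)--(iii), has a genuine gap in the generality in which the proposition is stated. Proposition~\ref{thm:ind} and \eqref{eq:actions+} are established only under Assumption~\ref{ass}, and the whole point of Proposition~\ref{pp:actions+} is to provide \eqref{eq:35} for an arbitrary triangulation $\T$ intrinsically (via the string model and Theorem~\ref{thm:1}, both of which are assumption-free), precisely so that Proposition~\ref{thm:INDE} can then extend Propositions~\ref{lem:u} and~\ref{thm:ind} to the two surfaces where Assumption~\ref{ass} fails (the annulus with one marked point on each boundary component, and the torus with one boundary component and one marked point). So invoking $\widetilde{X}_\T=\Phi_\T\circ\widetilde{X}_0$ here either silently restricts you to surfaces satisfying Assumption~\ref{ass} or makes the argument circular for the exceptional surfaces. (By contrast, the isotopy/shift ambiguity of $\Phi_\T$ that you flag as the main obstacle is harmless: null-homotopic equivalences fix reachable spherical objects by definition, and the statement only concerns shift orbits.) Keep the direct argument via Proposition~\ref{pp} and Theorem~\ref{thm:1}, and drop the transport through $\Phi_\T$.
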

\begin{proof}
Without lose of generality, we only prove the formula for $\varepsilon=1$.
On one hand, by \eqref{eq:homint2}, we have $$\dim\Hom^\ZZ(\widetilde{X}_\T(\eta),\widetilde{X}_\T(\tau))=\Int_{\Tri}(\sigma,\tau).$$
On the other hand, there is a triangle, i.e. \eqref{eq:SES} or \eqref{eq:SES1}, in Proposition~\ref{pp}
with $\eta=B_\sigma(\tau)$.
Then
$\widetilde{X}_\T(\eta)=\phi_{\widetilde{X}_\T(\sigma)}(\widetilde{X}_\T(\tau))$ as required.
\end{proof}

\begin{proposition}\label{thm:INDE}
For any $\surf$ and initial triangulation $\T_0$ (without Assumption~\ref{ass}),
Proposition~\ref{lem:u} and Proposition~\ref{thm:ind} holds.
\end{proposition}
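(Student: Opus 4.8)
The strategy is to mimic the proof of Proposition~\ref{lem:u} and Proposition~\ref{thm:ind}, but replacing every appeal to Corollary~\ref{cor:int} (which needed Assumption~\ref{ass}) by the now-unconditional intersection formula \eqref{eq:homint2} of Theorem~\ref{thm:1}, together with the now-unconditional braid/spherical compatibility in Proposition~\ref{pp:actions+}. Observe that Theorem~\ref{thm:1} itself was proved for the two exceptional surfaces by the embedding trick of \cite[\S~7]{Q} — passing to a higher-rank surface satisfying Assumption~\ref{ass} — so the intersection formula, and hence \eqref{eq:actions-}, is available on $\D_{fd}(\Gamma_0)$ and on every $\D_{fd}(\Gamma_\T)$ regardless of Assumption~\ref{ass}.

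First I would redo Proposition~\ref{lem:u} for an arbitrary initial triangulation $\T_0$. The uniqueness argument is unchanged: two candidate equivalences $\Phi_\T,\Phi'_\T$ satisfy $\Phi_\T^{-1}\circ\Phi'_\T(T_i)=T_i[m_i]$, and comparing $\Hom^\ZZ$ forces all $m_i$ to agree, so the composite is null-homotopic after a global shift. For existence, I would again induct on the number of flips from $\T_0$ to $\T$; the base case is trivial, and for the inductive step a forward flip $\T'=\T^\sharp_{\gamma_j}$ produces, via \cite{KY} and the tilting formula \cite[Proposition~5.2]{KQ}, an equivalence $\Phi\colon\D(\Gamma_\T)\to\D(\Gamma_{\T'})$ with $\Phi^{-1}(S_i')=\phi^{-1}_{S_j}(S_i)$ when there is an arrow $i\to j$, $S_j[1]$ when $i=j$, and $S_i$ otherwise; on the geometric side $s_i'=B_{s_j}(s_i)$ or $s_i$ accordingly. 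The only place Assumption~\ref{ass} entered the original argument is through \eqref{eq:35}, i.e.\ the identity $\phi^{-1}_{\widetilde X_0(s_j)}(\widetilde X_0(s_i))=\widetilde X_0(\bt{s_j}(s_i))$; this is exactly Proposition~\ref{pp:actions+} applied on $\D_{fd}(\Gamma_0)$ (with $\Int_{\surfo-\Tri}(s_i,s_j)=0$ since $s_i,s_j\in\T_0^\ast$ are dual arcs), so it holds in full generality. Substituting it into the displayed chain of equalities gives $S_i'[\ZZ]=\Phi\circ\Phi_\T(\widetilde X_0(s_i'))$, hence $\Phi_{\T'}:=\Phi\circ\Phi_\T$ is the required equivalence; backward flips are symmetric.

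Next I would redo Proposition~\ref{thm:ind}, i.e.\ prove $\widetilde X_\T(\eta)=\Phi_\T\circ\widetilde X_0(\eta)$ by induction on $l_\T(\eta)$. The base case $l_\T(\eta)=1$ is the defining property of $\Phi_\T$ just established. For the inductive step, Lemma~\ref{lem:dcp} (which holds for arbitrary $\T$) decomposes $\eta=B_\alpha(\beta)$ with $l_\T(\alpha),l_\T(\beta)<l_\T(\eta)$ and $\Int_{\surf-\bigtriangleup}(\alpha,\beta)=0$; the inductive hypothesis gives \eqref{eq:and}. Here the original proof invoked Corollary~\ref{cor:int} for \eqref{eq:notice}; instead I would invoke \eqref{eq:homint2} of Theorem~\ref{thm:1} directly, which yields $\dim\Hom^\ZZ(\widetilde X_0(\alpha),\widetilde X_0(\beta))=2\Int(\alpha,\beta)$, and then Proposition~\ref{pp} gives $\widetilde X_0(\eta)=\phi_{\widetilde X_0(\alpha)}(\widetilde X_0(\beta))$. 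Since $\Phi_\T$ is an exact equivalence it intertwines spherical twists, so applying it to the last identity and using \eqref{eq:and} produces $\widetilde X_\T(\eta)=\phi_{\widetilde X_\T(\alpha)}(\widetilde X_\T(\beta))=\Phi_\T(\phi_{\widetilde X_0(\alpha)}(\widetilde X_0(\beta)))=\Phi_\T\circ\widetilde X_0(\eta)$, closing the induction; bijectivity of $\widetilde X_\T$ follows since $\widetilde X_0$ and $\Phi_\T$ are bijections.

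The main obstacle — and the reason this proposition is stated separately rather than folded into Section~\ref{sec:ind} — is the circularity that must be avoided: Proposition~\ref{pp:actions+} and Theorem~\ref{thm:1} for a surface $\surf$ failing Assumption~\ref{ass} are themselves derived by reduction to a higher-rank surface satisfying Assumption~\ref{ass}, and one must check that this reduction does not secretly presuppose the statement of Proposition~\ref{thm:INDE} for $\surf$. Concretely, I would verify that the embedding $\surf\hookrightarrow\surf'$ of \cite[\S~7]{Q} is compatible with the bijections $\widetilde X_\T$ and the equivalences $\Phi_\T$ in the evident way, so that the intersection formula descends from $\surf'$ to $\surf$ purely formally, independently of whether independence has been established on $\surf$. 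Once that compatibility is in hand, the two inductions above go through verbatim with Corollary~\ref{cor:int} replaced by Theorem~\ref{thm:1}, and the proof is complete.
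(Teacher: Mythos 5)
Your proposal is correct and follows essentially the same route as the paper: the paper's own proof simply says to repeat the arguments of Proposition~\ref{lem:u} and Proposition~\ref{thm:ind}, with the only Assumption-dependent ingredient, namely \eqref{eq:35}, now supplied by Proposition~\ref{pp:actions+} (and \eqref{eq:notice} by Theorem~\ref{thm:1}), exactly as you do. Your extra paragraph flagging the potential circularity through the higher-rank reduction of \cite[\S~7]{Q} is a sensible precaution that the paper leaves implicit, but it does not change the argument.
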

\begin{proof}
Basically, we follow the same proof there.
Note that \eqref{eq:35} in the proof of Proposition~\ref{lem:u} is now covered by the proposition above,
which enables us do the generalization.
\end{proof}

\subsection{Intersection between open and closed arcs}
Let $\Gamma$  be the Ginzburg dg algebra of some quiver with potential.
A \emph{silting set} $\P$ in a triangulated category $\D$ is an $\Ext^{>0}$-configuration, i.e. a maximal collection
of non-isomorphic indecomposables such that $\Ext^i(P,T)=0$ for any $P,T\in\P$ and integer $i>0$.
The silting object associated to $\P$ is $\bigoplus_{T\in\P}T$.
By abuse of notation, we will not distinguish a silting set and its associated silting object.
For example, $\Gamma$ is the canonical silting object/set in $\per\Gamma$.

Moreover, one can forward/backward mutate a silting object to get new ones (cf. \cite{AI} for details).
A silting set $\P$ in $\per\Gamma$ is \emph{reachable} if it can be obtained by repeatedly mutations from $\Gamma$.
Denote by $\SEGp(\Gamma)$ the set of reachable silting sets in $\per\Gamma$
and
$$\RR(\per\Gamma)=\bigcup_{\P\in\SEGp(\Gamma)} \P$$
the set of \emph{reachable rigid objects} in $\per\Gamma$.
Recall a result from \cite{QQ2}.

\begin{lemma}\cite[Theorem~3.6]{QQ2}
There is a canonical bijection
\[\widetilde{M}_\T\colon\OAp(\surfo)\to\RR(\per\Gamma_\T)\]
where $\OAp(\surfo)$ is the subset of $\OA(\surfo)$ consisting of the open arcs in some triangulation in $\EGp(\surfo)$.
\end{lemma}

We finish the paper by proving another conjecture in \cite{QQ}.

\begin{theorem}\cite[Conjecture~10.6]{QQ}\label{thm:2}
For any triangulation $\T$, $\gamma\in\OAp(\surfo)$ and $\eta\in\CA(\surfo)$, we have
\begin{gather}\label{eq:f2}
    \dim\Hom^\ZZ(\widetilde{M}_\T(\gamma),\widetilde{X}_\T(\eta))=\Int(\gamma,\eta).
\end{gather}
\end{theorem}
\begin{proof}
First, for any two triangulations $\T$ and $\T'$,
we actually have a canonical identification $\Phi\colon\D_{fd}(\Gamma_\T)\to\D_{fd}(\Gamma_{\T'})$,
as shown in Proposition~\ref{thm:INDE}.
Note that there is a simple-projective duality between
a silting set in $\per\Gamma$ and the set of simples of the corresponding heart in $\D_{fd}(\Gamma)$.
Thus, as $\Phi$ preserves reachable spherical objects, up to shift,
$\Phi$ preserves reachable rigid objects, up to shift.
Second, by \cite[Lemma~5.13]{QQ}, the theorem holds for $\gamma\in\T$ and any $\eta\in\OA(\surf)$.
Now, choose any $\gamma\in\OAp(\surfo)$.
Let $\T'$ be a triangulation in $\EGp(\surfo)$ that contains $\gamma$.
Then we have
\[
    \dim\Hom_{\D_{fd}(\Gamma_\T)}^\ZZ(\widetilde{M}_\T(\gamma),\widetilde{X}_\T(\eta))=
    \dim\Hom_{\D_{fd}(\Gamma_{\T'})}^\ZZ(\widetilde{M}_{\T'}(\gamma),\widetilde{X}_{\T'}(\eta))=
    \Int(\gamma,\eta).
\]
\end{proof}

\appendix
\section{The string model}\label{app:A}
\subsection{Homological preparation}\label{sec:HH}

Let $(Q,W)=(Q_\T,W_\T)$ be the quiver with potential associated to a triangulation $\T$ of an unpunctured marked surfaces $\surf$. Recall from Section~\ref{subsec:dgquiver} that there is an associated graded quiver $\overline{Q}$ and an associated Ginzburg dg algebra $\Gamma_\T$ whose underlying graded algebra is the completion of the graded path algebra $\k\overline{Q}$.

For each vertex $i$ of $\overline{Q}$, denote by $S_i$ the corresponding simple module of $\Gamma_\T$.
There is a canonical heart $\h_\T$ in $\D_{fd}(\Gamma_\T)$, whose simples are $S_1,\cdots,S_n$. Let $$S_\T=\bigoplus_{i=1}^n S_i$$
the direct sum of the simples in $\h_\T$.
Consider the differential graded endomorphism algebra
$\ee_\T=\RHom(S_\T, S_\T).$
By \cite{KN}, we have the following exact equivalence:
\begin{gather}\label{eq:DE}
\xymatrix@C=4pc{
    \D_{fd}(\Gamma_\T) \ar@<.5ex>[rr]^{ \RHom_{\Gamma_\T}(S_\T, ?) }
        \ar@{<-}@<-.5ex>[rr]_{ ?\otimes^{\mathbf{L}}_{\ee_{\T}}S_\T }  && \per\ee_\T,
}\end{gather}
In particular, the simples in $\h_\T$ become the
indecomposable direct summands of $\ee_{\T}$.
Then the Ext-algebra $\EE_\T=\oplus_{i\in\mathbb{Z}}\Ext_{\D_{fd}(\Gamma_\T)}^i(S_\T,S_\T)$ is isomorphic to the homology algebra of $\ee_\T$. A basis of $\EE_\T$ is indexed by the arrows and trivial paths in $\overline{Q}$ as follows.

\begin{lemma}[Lemma~2.15 in \cite{KY}]\label{lem:simhom}
Let $i,j$ be vertices of $\overline{Q}$. Then $\Hom_{\D_{fd}(\Gamma)}(S_i,S_j[r])$ has a basis
\begin{gather}\label{eq:pi_0}
    \{\pi_{b}\mid\text{$b:i\to j\in \overline{Q}_1$ with $\deg b=1-r$}\}\cup
    \{\pi_{e_i}=\id_{S_i}\mid \text{if $i=j$ and $r=0$}\}
\end{gather}
where $e_i$ is the trivial path at $i$.
\end{lemma}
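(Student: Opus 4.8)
The plan is to compute $\Hom_{\D_{fd}(\Gamma)}(S_i,S_j[r])=H^r\RHom_\Gamma(S_i,S_j)$ from an explicit \emph{minimal} cofibrant (semifree) resolution of $S_i$ as a dg $\Gamma$-module. Writing $R=\bigoplus_i\k e_i$ and letting $E=\k Q_1$ be the $R$-bimodule spanned by the arrows of $Q$ (with $\k$-linear dual $E^{\vee}$), the diagonal bimodule $\Gamma$ admits the standard Koszul-type cofibrant resolution of length $3$ (see \cite{G,Ke})
\[
0\to\Gamma\otimes_R R\otimes_R\Gamma\to\Gamma\otimes_R E^{\vee}\otimes_R\Gamma\to\Gamma\otimes_R E\otimes_R\Gamma\to\Gamma\otimes_R R\otimes_R\Gamma\to\Gamma\to 0,
\]
the internal grading placing the generators of the $E$-, $E^{\vee}$-, $R$-factors in degrees $0$, $-1$, $-2$ (matching $\deg a$, $\deg a^{\ast}$, $\deg t_i$), and the differentials assembled from multiplication together with the Ginzburg formulas $d(a^{\ast})=\partial_aW$ and $\sum_i d(t_i)=\sum_a[a,a^{\ast}]$. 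Applying $-\otimes_\Gamma S_i$ (equivalently, restricting to the $e_i$-column) yields a minimal cofibrant resolution $\mathbf p_i\xrightarrow{\sim}S_i$ whose underlying graded $\Gamma$-module is free on one generator $w_{e_i}$ at vertex $i$ in degree $0$, together with one generator $w_b$ for each arrow $b\colon i\to j$ of $\overline Q$, placed in cohomological degree $\deg b-1$ and spanning a summand isomorphic, up to shift, to $P_j=\Gamma e_j$; concretely $dw_a=a\,w_{e_i}$ for $a\in Q_1$, while $dw_{a^{\ast}}$ and $dw_{t_i}$ absorb the relations $\partial_aW$ and $\sum_a[a,a^{\ast}]$.

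Next I would apply $\Hom_\Gamma(-,S_j)$. Since $\Hom_\Gamma(\Gamma e_l,S_j)=e_lS_j$ equals $\k$ for $l=j$ and $0$ otherwise, and $S_j$ carries the zero differential, the complex $\Hom_\Gamma(\mathbf p_i,S_j)$ is the direct sum of the one-dimensional contributions of those generators of $\mathbf p_i$ whose $P$-summand is $P_j$. Moreover the resolution is \emph{minimal}: every component of its differential lies in $\mathfrak m\cdot\mathbf p_i$, where $\mathfrak m$ is the arrow ideal of $\Gamma$, because $a,a^{\ast}\in\mathfrak m$ and $\partial_aW,\sum_a[a,a^{\ast}]\in\mathfrak m^{2}$ (the potential $W$ being a sum of $3$-cycles). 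Hence the induced differential on $\Hom_\Gamma(\mathbf p_i,S_j)$ vanishes, so $\Hom_{\D_{fd}(\Gamma)}(S_i,S_j[r])$ has as a basis exactly the generators $w$ of $\mathbf p_i$ with $P$-summand $P_j$ and with $-\deg w=r$: namely $w_{e_i}$ when $i=j$ and $r=0$, and $w_b$ for the arrows $b\colon i\to j$ of $\overline Q$ with $\deg b-1=-r$, i.e. $\deg b=1-r$. Identifying $w_{e_i}$ with $\pi_{e_i}=\id_{S_i}$ and $w_b$ with $\pi_b$ gives \eqref{eq:pi_0}.

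The crux is the one imported fact: that the displayed complex genuinely resolves $\Gamma$, i.e. that $\Gamma$ is homologically smooth and $3$-Calabi--Yau; this is where the Jacobi relations coming from $W$ are essential, and I would cite it from \cite{G,Ke}. If a self-contained check is wanted, one can filter $\Gamma$ by powers of the arrow ideal --- a complete, Hausdorff filtration since $\Gamma$ is the \emph{completed} path algebra --- and pass to the associated graded, where the complex becomes the ordinary Koszul complex of the associated graded algebra and acyclicity is checked directly. As an independent confirmation of the ranks (which is all that is needed once the $\pi_b$ are defined via the minimal model, so that their linear independence is automatic), one can argue purely triangulatedly: $\Hom^{<0}(S_i,S_j)=0$ and $\Hom^0(S_i,S_j)=\delta_{ij}\k$ because $S_i,S_j$ are simples of the canonical heart $\h_\Gamma$ with $\End(S_i)=\k$; the $3$-Calabi--Yau duality $\Hom^r(S_i,S_j)\cong D\Hom^{3-r}(S_j,S_i)$ then forces $\Hom^r=0$ for $r\notin\{0,1,2,3\}$, gives $\Hom^3(S_i,S_j)\cong\delta_{ij}\k$, and reduces $\Hom^2(S_i,S_j)$ to $D\Hom^1(S_j,S_i)$; finally $\dim\Hom^1(S_i,S_j)=\dim\Ext^1_{\h_\Gamma}(S_i,S_j)$ is the number of arrows $i\to j$ in $Q$, since $\h_\Gamma$ is equivalent to the category of finite-dimensional modules over $H^0(\Gamma)$, a complete path algebra modulo relations lying in $\mathfrak m^{2}$. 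This reproduces the cardinalities of the basis in \eqref{eq:pi_0}.
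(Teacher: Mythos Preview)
The paper does not give its own proof of this lemma; it is quoted from \cite[Lemma~2.15]{KY} and stated without argument. Your proof via the minimal cofibrant resolution of $S_i$ obtained by restricting Ginzburg's length-$3$ bimodule resolution of $\Gamma$ is correct and is essentially the proof one finds in \cite{KY}: the generators of that resolution are indexed precisely by $e_i$ together with the arrows of $\overline{Q}$ leaving $i$, minimality holds because $W$ has no length-$2$ summands so all differentials land in the arrow ideal, and hence $\Hom_\Gamma(\mathbf p_i,S_j)$ has zero differential and one reads off the basis \eqref{eq:pi_0} directly. Your supplementary rank check via the $t$-structure ($\Hom^{\le 0}$), the $3$-Calabi--Yau pairing ($\Hom^{\ge 2}$), and $\Ext^1_{\h_\Gamma}(S_i,S_j)\cong e_j(\k Q_1)e_i$ is also valid and provides an independent confirmation of the dimensions.
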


There is an $A_\infty$ structure on $\EE_\T$, induced
by the differential of $\Gamma_\T$ (cf. \cite[Appendix~A.15]{Ke}).
Using this structure, we have the following result.

\begin{lemma}\label{lem:comp}
The dg algebra $\ee_\T$ is formal and hence is quasi-isomorphic to $\EE_\T$. Moreover, for any trivial paths $e_i$ and $e_j$ and any arrows $x$ and $y$ in $\overline{Q}$, we have the following.
\begin{itemize}
	\item[(1)] \begin{gather}\pi_{e_j}\circ\pi_{e_i}=
	\begin{cases}\pi_{e_i}& \text{if $i=j$;}\\ 0&\text{otherwise.}\end{cases}\end{gather}
	\item[(2)] \begin{gather}\pi_{y}\circ\pi_{e_i}=
	\begin{cases}\pi_{y}& \text{if $s(y)=i$;}\\ 0&\text{otherwise.}\end{cases}\end{gather}
	\begin{gather}\pi_{e_j}\circ\pi_{x}=
	\begin{cases}\pi_{x}& \text{if $t(x)=j$;}\\ 0&\text{otherwise.}\end{cases}\end{gather}
	\item[(3)] \begin{gather}\pi_y\circ\pi_x=
	\begin{cases}\pi_{\alpha^\ast}& \text{if $xy\alpha$ (up to cyclical equivalence) is a term in $W_\T$;}\\ \pi_{t_{s(x)}} & \text{if $y=x^\ast$ or $x=y^\ast$} \\ 0&\text{otherwise.}\end{cases}\end{gather}
\end{itemize}
Here, $s(\alpha)$ denotes the starting point of an arrow $\alpha$ and $t(\alpha)$ denotes the ending point of $\alpha$.
\end{lemma}


By Lemma~\ref{lem:comp}, there is an exact equivalence $\per\ee_\T\simeq\per\EE_\T$, which, together with the equivalence \eqref{eq:DE}, gives an exact equivalence $\D_{fd}(\Gamma_\T)\simeq\per\EE_\T$. We will identify $\D_{fd}(\Gamma_\T)$ with $\per\EE_\T$ when there is no confusion. In particular, $S_1,\cdots,S_n$ become the indecomposable direct summands of $\EE_\T$ as dg $\EE_\T$-modules. Since the differential of $\EE_\T$ is zero, morphisms in \eqref{eq:pi_0} become homomorphisms of dg $\EE_\T$-modules, and in particular maps.

\begin{description}
\item[Convention] Let $a\colon i\to j\in\overline{Q}_1$.
By abuse of notation, $\pi_a[m]$ in $$\Hom_{\D_{fd}(\Gamma_\T)}(S_i[m],S_j[m+1-\deg a])$$
will
all be denoted by $\pi_a$ for short.
In particular, $\pi_b\pi_a$ makes sense whenever $ba\neq0$.
Similarly for other morphisms.
\end{description}

\subsection{The string model}\label{app:1}

To each internal point $A$ in an arc $\gamma_i\in\T$, we associate a vertex $\nu_A:=i$. Let $l$ be a segment in a triangle $\Delta$ of $\T$, whose endpoints are internal points in sides of $\Delta$ and which is not isotopic to a segment of any side of $\Delta$. Let $A,B$ be the endpoints of $l$ such that from $A$ to $B$ the decorating point in $\Delta$ is to the right of $l$. We associate a graded arrow $\alpha(l)$ to be the unique arrow from $\nu_A$ to $\nu_B$ in $\overline{Q}$ induced from $\Delta$. See Figure~\ref{fig:mors}.


\begin{figure}[ht]\centering
\begin{tikzpicture}[yscale=.6,xscale=.8,rotate=0]
\draw[NavyBlue,thick](-2,0)node{$\bullet$}to(2,0)node{$\bullet$}to(0,4)node{$\bullet$}to(-2,0)
    (0,1.5)node[red]{$\circ$};
\draw[NavyBlue,thick](0,0)node[below]{$\gamma_k$} (-1.4,1.5)node[above]{$\gamma_i$} (1.4,1.5)node[above]{$\gamma_j$};
\end{tikzpicture}
\qquad\qquad
\begin{tikzpicture}[yscale=.45,xscale=.6,rotate=0]
\draw[NavyBlue,thick](-2,4)node{$\bullet$} (2,4)node{$\bullet$} (0,0)node{$\bullet$};
\draw[NavyBlue,thick](-2,4)[left]node{$i$} (2,4)node[right]{$j$} (0,0)node[below]{$k$};

\draw[thick,->](-1.6,4.2) to (1.6,4.2);
\draw[thick,->](1.6,3.9) to (-1.6,3.9);
\draw[thick](0,4.2)node[above]{$a$} (0,3.9)node[below]{$a^\ast$};

\draw[thick,->](-.4,.4) to (-2.2,3.6);
\draw[thick,->](-1.9,3.6) to (-.1,.4);
\draw[thick](-1.3,2)node[left]{$c$} (-1,2.15)node[right]{$c^\ast$};

\draw[thick,->](2.2,3.6) to (.4,.4);
\draw[thick,<-](1.9,3.6) to (.1,.4);
\draw[thick](1.3,2)node[right]{$b$}  (1.1,2.1)node[left]{$b^\ast$};

\draw[thick,->](-2,4.3) .. controls +(70:2) and +(150:2).. (-2.5,4);
\draw[thick](-2.5,5.2)node[above]{$t_i$};

\draw[thick,<-](2,4.3) .. controls +(110:2) and +(30:2).. (2.5,4);
\draw[thick](2.5,5.2)node[above]{$t_j$};

\draw[thick,<-](.2,-.2)..controls +(-40:2.5) and +(-140:2.5)..(-.2,-.2);
\draw[thick](0,-1.8)node{$t_k$};
\end{tikzpicture}

\qquad

\begin{tikzpicture}[yscale=.6,xscale=.8,rotate=0]
\draw[NavyBlue,thick](-2,0)node{$\bullet$}to(2,0)node{$\bullet$}to(0,4)node{$\bullet$}to(-2,0)
    (0,1.5)node[red]{$\circ$};
\draw[NavyBlue,thick](0,0)node[below]{$\gamma_k$} (-1.4,1.5)node[above]{$\gamma_i$} (1.4,1.5)node[above]{$\gamma_j$};
\draw[red](-.8,2.7)node[above]{$A$} (.9,2.5)node[above]{$B$};
\draw[red, thick,>=stealth](-.5,3)edge[bend right=30](.5,3);
\draw[red](0,2.5)node{$l$};
\draw[, thick] (0,-2)node{$\alpha(l)=a:\nu_A\to \nu_B$};
\end{tikzpicture}
\qquad
\begin{tikzpicture}[yscale=.6,xscale=.8,rotate=0]
\draw[NavyBlue,thick](-2,0)node{$\bullet$}to(2,0)node{$\bullet$}to(0,4)node{$\bullet$}to(-2,0)
    (0,1.5)node[red]{$\circ$};
\draw[NavyBlue,thick](0,0)node[below]{$\gamma_k$} (-1.4,1.5)node[above]{$\gamma_i$} (1.4,1.5)node[above]{$\gamma_j$};
\draw[red](-.8,2.7)node[above]{$A$} (1,0)node[below]{$B$};
\draw[red, thick](-.5,3) .. controls +(30:1) and +(30:1) .. (1,0);
\draw[red](.5,1.8)node{$l$};
\draw[thick] (0,-2)node{$\alpha(l)=c^\ast:\nu_A\to \nu_B$};
\end{tikzpicture}
\qquad
\begin{tikzpicture}[yscale=.6,xscale=.8,rotate=0]
\draw[NavyBlue,thick](-2,0)node{$\bullet$}to(2,0)node{$\bullet$}to(0,4)node{$\bullet$}to(-2,0)
    (0,1.5)node[red]{$\circ$};
\draw[NavyBlue,thick](0,0)node[below]{$\gamma_k$} (-1.2,1.9)node[above]{$\gamma_i$} (1.2,1.9)node[above]{$\gamma_j$};
\draw[red, thick] (-.5,3).. controls +(-60:.1) and +(60:.5)..(.5,1.25);
\draw[red, thick] (-1.5,1).. controls +(0:.1) and +(-120:.5)..(.5,1.25);
\draw[red](.7,1)node{$l$};
\draw[, thick] (0,-2)node{$\alpha(l)=t_i:\nu_A\to \nu_B$};
\draw[red](-.8,2.7)node[above]{$A$} (-1.8,1.5)node[below]{$B$};
\end{tikzpicture}

\caption{Segments inducing graded arrows}
\label{fig:mors}
\end{figure}
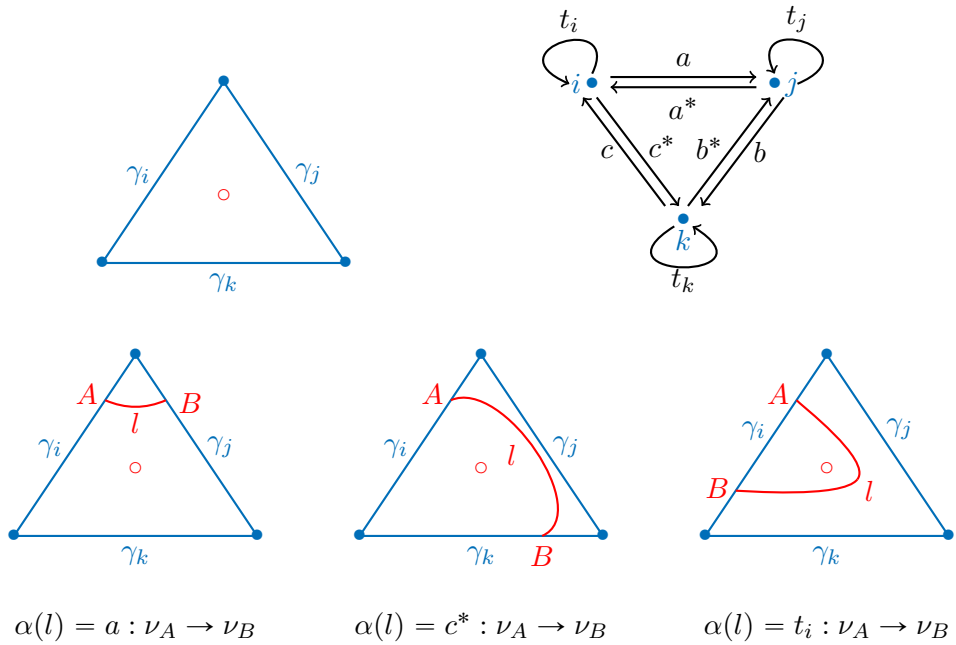

\begin{figure}\centering
\begin{tikzpicture}[yscale=.6,xscale=.8,rotate=0]
\draw[NavyBlue, thick](-2,0)node{$\bullet$}to(2,0)node{$\bullet$}to(0,4)node{$\bullet$}to(-2,0)
    (0,1.5)node[red]{$\circ$};
\draw[red, thick,>=stealth](-1.6,-1) .. controls +(60:.1) and +(-120:.2) .. (-1.1,0);
\draw[red, thick,>=stealth](-1.1,0) .. controls +(60:.1) and +(180:.5) .. (0,.9);

\draw[red, thick,>=stealth](1.1,0) .. controls +(120:.1) and +(0:.5) .. (-.1,.9);
\draw[red, thick,>=stealth](1.6,-1) .. controls +(120:.1) and +(-60:.2) .. (1.1,0);

\draw(-1.1,0)node{$\bullet$}node[above]{}(1.1,0)node{$\bullet$}node[above]{};
\end{tikzpicture}
\caption{A digon intersected by $\sigma$ and $\T$}
\label{fig:bad}
\end{figure}
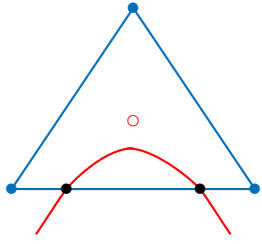

\begin{construction}\label{cons:string}
Let $\sigma$ be an oriented general closed arc such that
it is in a minimal position w.r.t. $\T$ (i.e. there is no digon shown as in Figure~\ref{fig:bad}).

\begin{itemize}
\item Suppose that $\sigma$ intersects $\T$ at
  $V_0,\ldots,V_p$ accordingly from its starting point to its ending point, where $V_i$ is in the arc $\gamma_{k_i}\in\T$
  for $0\leq i\leq p$ and some $1\leq k_i\leq n$ (cf. Figure~\ref{fig:int0}).
  Moreover, denote by $V_{-1}$ and $V_{p+1}$ the starting and ending points of $\sigma$, respectively, by $\sigma(a,b)$ the segment of $\sigma$ between $V_a$ and $V_b$ for $0\leq a<b\leq p+1$, and by $\Lambda_i$ the triangle containing the segment $\sigma(i-1,i)$ for $0\leq i\leq p+1$.
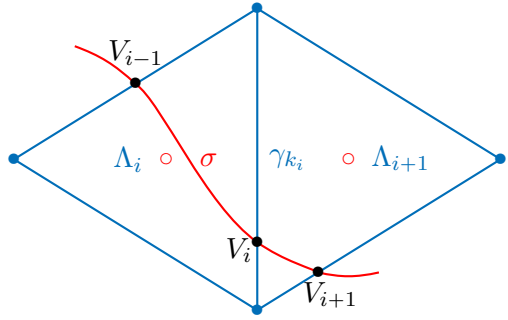
\begin{figure}[ht]\centering
\begin{tikzpicture}[yscale=1,xscale=.8,rotate=90]
\draw[NavyBlue, thick](-2,0)node{$\bullet$}to(2,0)node{$\bullet$}to(0,4)node{$\bullet$}to(-2,0)
    to(0,-4)node{$\bullet$}to(2,0)
    (0,1.5)node[red]{$\circ$} (0,1.7)node[left]{{$\Lambda_i$}}
    (0,-1.5)node[red]{$\circ$}(0,-1.7)node[right]{$\Lambda_{i+1}$};
\draw[](-1.8,-1.2)node[]{$V_{i+1}$} (1.4,2)node[]{$V_{i-1}$}
    (-1.2,.3)node[]{$V_{i}$};
\draw[red, thick,>=stealth](-1.5, -2)to[bend left=10](-1.5,-1)
    .. controls +(60:.1) and +(-120:.5) .. (-1.1,0);
\draw[red, thick,>=stealth](-1.1,0) .. controls +(60:1) and +(-120:.5) .. (1,2)
    to[bend left=-10](1.5,3);
\draw[red](0,.8)node{$\sigma$};\draw[NavyBlue](0,-.5)node{$\gamma_{k_i}$};
\draw(-1.5,-1)node{$\bullet$}(-1.1,0)node{$\bullet$}(1,2)node{$\bullet$};
\end{tikzpicture}
\caption{The intersections between $\sigma$ and $\T$}
\label{fig:int0}
\end{figure}
\item Each segment $\sigma(i-1,i)$ ($1\leq i\leq p$) of $\sigma$ corresponds to a graded arrow $a_i:=\alpha(\sigma(i-1,i))$ between $k_{i-1}$ and $k_{i}$ in $\overline{Q}$. 
Then we obtain a walk in $\overline{Q}$, called a string:
  \begin{gather}\label{eq:string}
  w(\sigma):\xymatrix{
  k_0\ar@{-}[r]^{a_1}&k_1\ar@{-}[r]^{a_2}&\cdots
  \ar@{-}[r]^{a_p}&k_p}.
  \end{gather}
We define $\epsilon(a_i)=+$ if $a_i$ points to the right, and $\epsilon(a_i)=-$ otherwise.
\item The string $w(\sigma)$ induces a graded $\EE_\T$-module $|X_\sigma|$ and a map $d_\sigma$ on $|X_\sigma|$ of degree $1$ as follows.
\begin{itemize}
\item $|X_\sigma|=\oplus_{i=0}^p S_{k_i}[\varrho_i],$
where $\varrho_0=0$ and $\varrho_{i}=\varrho_{i-1}-\epsilon(a_i)\deg a_i$ for $1\leq i\leq p$.
\item For each $a_i$, if $\epsilon(a_i)=+$, then the map $\pi_{a_i}:S_{k_{i-1}}\to S_{k_i}[1-\deg a_i]$ induces a component $S_{k_{i-1}}[\varrho_{i-1}]\to S_{k_i}[\varrho_i]$ of $d_\sigma$; if $\epsilon(a_i)=-$, then the map $\pi_{a_i}:S_{k_{i}}\to S_{k_{i-1}}[1-\deg a_i]$ induces a component $S_{k_{i}}[\varrho_{i}]\to S_{k_{i-1}}[\varrho_{i-1}]$ of $d_\sigma$. The other components of $d_\sigma$ are zero.
\end{itemize}
\end{itemize}
\end{construction}

\begin{proposition}
$X_\sigma:=(|X_\sigma|,d_\sigma)$ is a perfect dg $\EE_\T$-module in $\per(\EE_\T).$
\end{proposition}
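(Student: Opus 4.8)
The plan is to verify directly from Construction~\ref{cons:string} that $(X_\sigma, d)$ is a dg module over $\EE_\T$; since $\EE_\T$ has trivial differential, the only nontrivial condition to check is that $d$ squares to zero (the compatibility of $d$ with the $\EE_\T$-action being built in by construction, and boundedness being clear since the string is finite). Once $X_\sigma$ is shown to be a dg $\EE_\T$-module, membership in $\per(\EE_\T)$ is immediate: each summand $S_{k_i}[\varrho_i]$ is a shift of an indecomposable direct summand of $\EE_\T$, so $X_\sigma$ is obtained from finitely many shifted copies of $\EE_\T$ by finitely many cones, hence is perfect.

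So the heart of the proof is the computation $d^2 = 0$. First I would write out $d^2$ in terms of the components indexed by the string $w(\sigma)$: a nonzero contribution to $d^2$ can only come from composing two consecutive components, i.e.\ from a pair of adjacent arrows $a_i, a_{i+1}$ in the walk \eqref{eq:string} (non-adjacent arrows act on disjoint pairs of summands and contribute nothing, and any single component composed with itself is impossible since the summands are pairwise at distinct positions in the walk). Thus $d^2 = 0$ reduces to checking, for each $i$, that the relevant composite of $\pi_{a_i}$ and $\pi_{a_{i+1}}$ (read in the order dictated by the signs $\epsilon(a_i), \epsilon(a_{i+1})$) vanishes in $\EE_\T$.

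The key input here is Lemma~\ref{lem:comp}, which classifies exactly when such a composition of two arrows in $\overline{Q}$ is nonzero: it is nonzero only for the two configurations $\pi_{a_{z+1}}\pi_{a_z} = \pi_{a_{z-1}^\ast}$ coming from a $3$-cycle of $W$, and $\pi_a\pi_{a^\ast} = \pi_{t_i}$, $\pi_{b^\ast}\pi_b = -\pi_{t_i}$ coming from the loop relation. The combinatorial core of the argument is therefore to show that none of these configurations can actually arise from two consecutive segments $V_{i-1}V_i$ and $V_iV_{i+1}$ of a general closed arc $\sigma$ that is in minimal position with respect to $\T$. Geometrically: two consecutive crossing segments of $\sigma$ share the vertex $V_i$ lying on the arc $\gamma_{k_i}$, and they pass through (possibly different) triangles; the local picture of how $\sigma$ enters and leaves $\gamma_{k_i}$ determines which arrow of $\overline{Q}$ each segment induces (via Figure~\ref{fig:arrows}), and one checks case by case that the two induced arrows are never in one of the ``bad'' configurations of Lemma~\ref{lem:comp} --- the offending configurations would force $\sigma$ either to cut out a triangle or to form a digon with an arc of $\T$, contradicting minimality (cf.\ Figure~\ref{fig:bad}). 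This case analysis, organized by the degrees $\deg a_i \in \{0,-1,-2\}$ and the signs $\epsilon(a_i)$, is the main obstacle, but it is entirely local and finite.

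Finally, having established $d^2 = 0$, I would note that $X_\sigma$ is by construction a finite direct sum of shifted summands of the semisimple (as a graded algebra) $\EE_\T$, with a strictly ``upper/lower triangular'' differential along the string, so it is a finitely built object in $\per(\EE_\T)$; alternatively one filters $X_\sigma$ by the subwalks $k_0 \,\text{---}\, \cdots \,\text{---}\, k_j$ and realizes $X_\sigma$ as an iterated cone of the $S_{k_j}[\varrho_j]$, each of which lies in $\per(\EE_\T)$. This completes the proof.
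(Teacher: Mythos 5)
Your proposal is correct and follows essentially the same route as the paper: reduce everything to $d^2=0$, observe that only compositions of consecutive components along the string can contribute, and kill these using Lemma~\ref{lem:comp}. The paper compresses your case analysis into the single observation that any two neighboring arc segments of $\sigma$ lie in different triangles (minimality ruling out the digon of Figure~\ref{fig:bad}), so the induced arrows are never in one of the nonvanishing configurations of Lemma~\ref{lem:comp}, and it leaves the (immediate) perfectness claim implicit, just as you argue via iterated cones.
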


\begin{proof}
We only need to prove that $d^2=0$. By Lemma~\ref{lem:comp}, this follows from that any two neighboring arc segments of $\sigma$ are from different triangles.
\end{proof}

By construction, for any oriented general closed arc $\sigma'$, if $\sigma'\sim\sigma$ then $X_{\sigma'}=X_{\sigma}$. Let $\overline{\sigma}$ be the oriented general closed arc obtained from $\sigma$ by conversing the orientation. It is easy to see $X_{\overline{\sigma}}\cong X_{\sigma}[l]$ for some $l$. Denote by $\widetilde{X}(\sigma)$ the shift orbit $X_\sigma[\mathbb{Z}]$ of $X_\sigma$. Then
\[\sigma\mapsto \widetilde{X}(\sigma)\]
is a well-defined map from the set $\GCA(\surfo)$ to the set of objects in the orbit category $\per\EE_\T/[1]$.

\subsection{Homomorphisms between strings}\label{app:3}
Let $\sigma$ be an oriented general closed arc as in Construction~\ref{cons:string}, with
the string \eqref{eq:string} and the associated dg $\EE_\T$-modules $X_\sigma$,
whose underly graded module is $\bigoplus_{i=0}^p S_{k_i}[\varrho_i]$.

Now, take another oriented general closed arc $\tau$ with $\Int_{\surf-\Tri}(\sigma,\tau)=0$ and
which is in a minimal position w.r.t. $\T$ and $\sigma$. Note that $\tau$ may be isotopic to $\sigma$.
Suppose that $\tau$ intersects $\T$ at $W_0,\ldots,W_q$ in order,
where $W_i$ is in the arc $\gamma_{j_i}\in\T$,
with staring point $W_{-1}$ and ending point $W_{q+1}$.
Then there are the associated string
\[w(\tau):\xymatrix{
	{j_0}\ar@{-}[r]^{{b_1}}&{j_1}\ar@{-}[r]^{{b_2}}&\cdots\ar@{-}[r]^{{b_q}}&{j_q}
},\]
where $b_i$ is the arrow in $\overline{Q}_\T$ induced by the segment $\tau(i-1,i)$,
and the associated dg $\EE_\T$-module $X_\tau$,
whose underly graded module is $\bigoplus_{i=0}^q S_{j_i}[\kappa_i]$.

\begin{construction}\label{cons:above}
Suppose $V_{-1}=W_{-1}$. Then there is an angle $\theta(\sigma,\tau)$ from $\sigma$ to $\tau$ clockwise at this decorating point (cf. Figure~\ref{fig:3cases}). We will construct an element  $\varphi(\sigma,\tau)$ in $\hom^0(X_\sigma,X_\tau[\upsilon])$ induced by $\theta(\sigma,\tau)$. Here the value of $\upsilon=\upsilon(\sigma,\tau)$ is determined by the relative position of the segments $\sigma(-1,0)$ and $\tau(-1,0)$. There are four cases shown in Figure~\ref{fig:3cases}, where $\upsilon=0,1,2,3$, respectively.
	
\begin{figure}[ht]\centering
\begin{tikzpicture}[yscale=.5,xscale=.6,rotate=0]
\draw[blue, thick] (.7,.7)node{$\sigma$} (-.7,.7)node{$\tau$};
\draw[blue, thick,>=stealth] (0,1.5)edge[->-=.5,>=stealth](-.8,-.5);
\draw[blue, thick,>=stealth] (0,1.5)edge[->-=.5,>=stealth](.8,-.5);
\draw[] (0,-.8)node{ };
		\draw[thick](-2,0)node{$\bullet$}to(2,0)node{$\bullet$}to(0,4)node{$\bullet$}to(-2,0)
	(0,1.5)node[white]{$\bullet$}node[red]{$\circ$};
\draw[red,thick] (-.26,.9)edge[bend right] (.26,.9);
	\draw[red](0,.5)node{$\theta$};
\end{tikzpicture}
\qquad
\begin{tikzpicture}[yscale=.5,xscale=.6,rotate=0]
\draw[blue, thick] (.4,.5)node{$\sigma$} (-.4,2.1)node{$\tau$};2
\draw[blue, thick,>=stealth] (0,1.5)edge[->-=.5,>=stealth](0,-.5);
\draw[blue, thick,>=stealth] (0,1.5)edge[->-=.5,>=stealth](-1.5,2.5);
\draw[red,thick] (0,1)edge[bend left] (-.5,1.85);
\draw[red](-.7,.8)node{$\theta$};
\draw[] (0,-.8)node{ };
	\draw[thick](-2,0)node{$\bullet$}to(2,0)node{$\bullet$}to(0,4)node{$\bullet$}to(-2,0)
	(0,1.5)node[white]{$\bullet$}node[red]{$\circ$};
\end{tikzpicture}
\qquad
\begin{tikzpicture}[yscale=.5,xscale=.6,rotate=0]
\draw[blue, thick] (.3,.5)node{$\sigma$} (.7,1.5)node{$\tau$};
\draw[blue, thick,>=stealth] (0,1.5)edge[->-=.5,>=stealth](0,-.5);
\draw[blue, thick,>=stealth] (0,1.5)edge[->-=.5,>=stealth](1.5,2.5);
\draw[red,thick] (0,1)..controls +(160:1) and +(120:1)..(.4,1.8);
\draw[red](-.7,1.2)node{$\theta$};
\draw[] (0,-.8)node{ };
	\draw[thick](-2,0)node{$\bullet$}to(2,0)node{$\bullet$}to(0,4)node{$\bullet$}to(-2,0)
	(0,1.5)node[white]{$\bullet$}node[red]{$\circ$};
\end{tikzpicture}
\qquad
\begin{tikzpicture}[yscale=.5,xscale=.6,rotate=0]
\draw[blue, thick] (-.7,.7)node{$\sigma$} (.7,.7)node{$\tau$};
\draw[blue, thick,>=stealth] (0,1.5)edge[->-=.5,>=stealth](-.8,-.5);
\draw[blue, thick,>=stealth] (0,1.5)edge[->-=.5,>=stealth](.8,-.5);
\draw[red,thick] (-.2,1)..controls +(120:1.5) and +(60:1.5)..(.2,1);
\draw[red](0,2.5)node{$\theta$};
\draw[] (0,-.8)node{ };
	\draw[thick](-2,0)node{$\bullet$}to(2,0)node{$\bullet$}to(0,4)node{$\bullet$}to(-2,0)
(0,1.5)node[white]{$\bullet$}node[red]{$\circ$};
\end{tikzpicture}
\caption{The four cases for the starting segments of $\sigma$ and $\tau$}
\label{fig:3cases}
\end{figure}
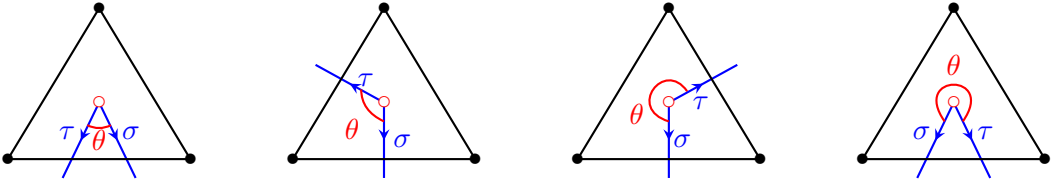

Note that there is a unique $s\geq 0$ with a unique segment $l(\sigma,\tau)$ in the triangle $\Lambda_s$ connecting $V_s$ and $W_s$ such that $l(\sigma,\tau)$, $\sigma(-1,s)$ and  $\tau(-1,s)$ enclose a contractible triangle having $\theta(\sigma,\tau)$ as an internal angle. (A degenerate case is that when $\sigma\sim\tau$, $l(\sigma,\tau)$ is the decorating point in $\Lambda_{p+1}$.) It is clear that $s=0$ for the last three cases in Figure~\ref{fig:3cases} and $s>0$ for the first case. We show in Figure~\ref{fig:lastcase} all the possible subcases for $\Lambda_s$ when $s>0$.
	
When the associated graded arrow $\alpha\left(l(\sigma,\tau)\right)$ exists and is from $\nu_{V_s}$ to $\nu_{W_s}$, let $\varphi_s=\pi_{\alpha(l(\sigma,\tau))}:S_{k_s}\to S_{j_s}[\deg\varphi_s]$. By construction, for $i<s$, we have $S_{k_i}=S{j_i}$ and let $\varphi_{i}=\id:S_{k_i}\to S_{j_i}$. We construct $\varphi(\sigma,\tau)$ in $\hom^0(X_\sigma,X_\tau[\upsilon])$ whose nonzero components are $\varphi_i[\varrho_i]$. That is, when $s=0$ (i.e. the last three cases in Figure~\ref{fig:3cases}), $\varphi(\sigma,\tau)$ has the following form
	\begin{equation}\label{eq:form}
	\xymatrix@C=6pc{S_{k_0}\ar[d]_{\varphi_0}\ar@{-}[r]^{\pi_{a_{1}}\quad}&S_{k_{1}}[\varrho_{1}]\ar@{-}[r]&\cdots\\
		S_{j_0}[\deg\varphi_0]\ar@{-}[r]^{(-1)^{\deg\varphi_0}\pi_{b_{1}}\quad}&S_{j_{1}}[\kappa_{1}+\deg\varphi_0]\ar@{-}[r]&\cdots
	}
	\end{equation}
	and when $s>0$ (i.e. the first case in Figure~\ref{fig:3cases}), $\varphi(\sigma,\tau)$ has the following form
\begin{equation}\label{eq:form2} \xymatrix@C=3pc{S_{k_0}\ar@{=}[d]\ar@{-}[r]&\cdots\ar@{-}[r]&S_{k_{s-1}}[\varrho_{s-1}]
    \ar@{=}[d]\ar@{-}[r]^{\quad\pi_{a_s}[\varrho_{s-1}]}&S_{k_s}[\varrho_s]\ar@{-->}[d]^{\varphi_s[\varrho_s]}\ar@{-}[r]^{\pi_{a_{s+1}}[\varrho_{s}]\quad}&S_{k_{s+1}}[\varrho_{s+1}]\ar@{-}[r]&\cdots\\
S_{k_0}\ar@{-}[r]&\cdots\ar@{-}[r]&S_{j_{s-1}}[\kappa_{s-1}]\ar@{-}[r]^{\quad\pi_{b_s}[\kappa_{{s-1}}]}&S_{j_s}[\kappa_s]\ar@{-}[r]^{\pi_{b_{s+1}}[\kappa_{s}]\quad}&S_{j_{s+1}}[\kappa_{s+1}]\ar@{-}[r]&\cdots
	}
\end{equation}
where $\varphi_s[\varrho_s]$ exists if and only if one of the cases in (a) or (b) in Figure~\ref{fig:lastcase} occurs.
	
\end{construction}

\begin{figure}[ht]\centering
\begin{tikzpicture}[yscale=.45,xscale=-.6,rotate=180]
\draw[] (0,3)node{(a)};
\draw[] (0,5)node{ };
\end{tikzpicture}
\qquad
\begin{tikzpicture}[yscale=.45,xscale=-.6,rotate=180]
\draw[thick](-2,0)node{$\bullet$}to(2,0)node{$\bullet$}to(0,4)node{$\bullet$}to(-2,0)
    (0,1.5)node[red]{$\circ$};
\draw[blue, thick] (.1,1)node{$\tau$} (1.1,.7)node{$\sigma$};
\draw[blue, thick,>=stealth] (0,-.5)edge[bend right,->-=.5,>=stealth](-.7,3.7);
\draw[blue, thick,>=stealth] (.5,-.5)edge[bend left=10,->-=.5,>=stealth](1.5,2);
\draw[red, thick,>=stealth] (-.35,3.3)edge[bend right=10,>=stealth](1.25,1.5);
\draw[red, thick] (-.4,3.5)node[below]{$B$} (1.1,2.1)node[below]{$A$};
\end{tikzpicture}
\qquad
\begin{tikzpicture}[yscale=.45,xscale=-.6,rotate=180]
	\draw[thick](-2,0)node{$\bullet$}to(2,0)node{$\bullet$}to(0,4)node{$\bullet$}to(-2,0)
	(0,1.5)node[red]{$\circ$};
\draw[blue, thick] (.1,1)node{$\tau$} (1.1,.7)node{$\sigma$};
\draw[blue, thick,->-=.3,>=stealth] (0,-.5).. controls +(70:3.5) and
    +(90:4)..(-.7,-.5);    
\draw[blue, thick,>=stealth] (.5,-.5)edge[bend left=10,->-=.5,>=stealth](1.5,2);            
\draw[red, thick,>=stealth] (-.7,0)..controls +(+100:2) and +(120:3)..(1.2,1.6);        
\draw[red, thick] (-1.3,0)node[above]{$B$} (1.3,2)node[below]{$A$};
\end{tikzpicture}
\qquad
\begin{tikzpicture}[yscale=.45,xscale=-.6,rotate=180]
	\draw[thick](-2,0)node{$\bullet$}to(2,0)node{$\bullet$}to(0,4)node{$\bullet$}to(-2,0)
	(0,1.5)node[red]{$\circ$};
\draw[blue, thick] (.1,1)node{$\tau$} (1.1,.7)node{$\sigma$};
	\draw[blue, thick,->-=.3,>=stealth] (0,-.5).. controls +(70:3.5) and
    +(90:4)..(-.7,-.5);    
\draw[blue, thick,>=stealth] (.8,-.5)edge[bend right,->-=.5,>=stealth](-.7,3.7);            
\draw[red, thick,>=stealth] (-.7,0)edge[bend left=10,>=stealth](-.3,3.4);        
\draw[red, thick] (-1.3,0)node[above]{$B$} (-.9,2.3)node[below]{$A$};
\end{tikzpicture}
	
\begin{tikzpicture}[yscale=.6,xscale=.8,rotate=180]
\draw[] (0,3)node{(b)};
\draw[] (0,5)node{ };
\end{tikzpicture}
\qquad
\begin{tikzpicture}[yscale=.45,xscale=.6,rotate=180]
	\draw[thick](-2,0)node{$\bullet$}to(2,0)node{$\bullet$}to(0,4)node{$\bullet$}to(-2,0)
	(0,1.5)node[red]{$\circ$};
\draw[blue, thick] (.1,1)node{$\sigma$} (1.1,.7)node{$\tau$};
\draw[blue, thick,>=stealth] (0,-.5)edge[bend right,->-=.5,>=stealth](-.7,3.7);
\draw[blue, thick,>=stealth] (.5,-.5)edge[bend left=10,->-=.5,>=stealth](1.5,2);
\draw[red, thick,>=stealth] (-.25,3.5)edge[bend right=20,>=stealth](.85,2.3);
\draw[red, thick] (-.3,3.5)node[below]{$A$} (1.2,2.1)node[below]{$B$};
\end{tikzpicture}
\qquad
\begin{tikzpicture}[yscale=.45,xscale=.6,rotate=180]
	\draw[thick](-2,0)node{$\bullet$}to(2,0)node{$\bullet$}to(0,4)node{$\bullet$}to(-2,0)
	(0,1.5)node[red]{$\circ$};
\draw[blue, thick] (.1,1)node{$\sigma$} (1.1,.7)node{$\tau$};
\draw[blue, thick,->-=.3,>=stealth] (0,-.5).. controls +(70:3.5) and
    +(90:4)..(-.7,-.5);    
\draw[blue, thick,>=stealth] (.5,-.5)edge[bend left=10,->-=.5,>=stealth](1.5,2);            
\draw[red, thick,>=stealth] (-1.3,0)edge[bend left=20,>=stealth](.4,3.2);        
\draw[red, thick] (-1.3,0)node[above]{$A$} (.6,3)node[below]{$B$};
\end{tikzpicture}
	\qquad
	\begin{tikzpicture}[yscale=.45,xscale=.6,rotate=180]
	\draw[thick](-2,0)node{$\bullet$}to(2,0)node{$\bullet$}to(0,4)node{$\bullet$}to(-2,0)
	(0,1.5)node[red]{$\circ$};
	\draw[blue, thick] (.1,1)node{$\sigma$} (1.1,.7)node{$\tau$};
	\draw[blue, thick,->-=.3,>=stealth] (0,-.5).. controls +(70:3.5) and +(90:4)..(-.7,-.5);    
	\draw[blue, thick,>=stealth] (.8,-.5)edge[bend right,->-=.5,>=stealth](-.7,3.7);            
	\draw[red, thick,>=stealth] (-1.3,0)edge[bend right=20,>=stealth](-.85,2.3);        
	\draw[red, thick] (-1.3,0)node[above]{$A$} (-.9,2.3)node[below]{$B$};
\end{tikzpicture}
	
\begin{tikzpicture}[yscale=.6,xscale=.8,rotate=180]
\draw[] (0,3)node{(c)};
\draw[] (0,5)node{ };
\end{tikzpicture}
\qquad
\begin{tikzpicture}[yscale=.45,xscale=.6,rotate=180]
	\draw[thick](-2,0)node{$\bullet$}to(2,0)node{$\bullet$}to(0,4)node{$\bullet$}to(-2,0)
	(0,1.5)node[red]{$\circ$};
\draw[blue, thick] (-1.1,.7)node{$\sigma$} (1.1,.7)node{$\tau$};
\draw[blue, thick,>=stealth] (-.5,-.5)edge[bend right=10,->-=.5,>=stealth](-1.5,2);
\draw[blue, thick,>=stealth] (.5,-.5)edge[bend left=10,->-=.5,>=stealth](1.5,2);
\end{tikzpicture}
\qquad
\begin{tikzpicture}[yscale=.45,xscale=.6,rotate=180]
	\draw[thick](-2,0)node{$\bullet$}to(2,0)node{$\bullet$}to(0,4)node{$\bullet$}to(-2,0)
	(0,1.5)node[red]{$\circ$};
\draw[blue, thick] (-1.1,1)node{$\sigma$} (1.1,.7)node{$\tau$};
\draw[blue, thick,>=stealth] (-.8,-.5)edge[bend left,->-=.5,>=stealth](.7,3.7);    
\draw[blue, thick,>=stealth] (.5,-.5)edge[bend left=10,->-=.5,>=stealth](1.5,2);            
\end{tikzpicture}
\qquad
\begin{tikzpicture}[yscale=.45,xscale=.6,rotate=180]
	\draw[thick](-2,0)node{$\bullet$}to(2,0)node{$\bullet$}to(0,4)node{$\bullet$}to(-2,0)
	(0,1.5)node[red]{$\circ$};
\draw[blue, thick] (-1.1,.7)node{$\sigma$} (1.1,.7)node{$\tau$};
\draw[blue, thick,>=stealth] (-.5,-.5)edge[bend right=10,->-=.5,>=stealth](-1.5,2);
\draw[blue, thick,>=stealth] (.8,-.5)edge[bend right,->-=.5,>=stealth](-.7,3.7);
\end{tikzpicture}
	
\begin{tikzpicture}[yscale=.6,xscale=.8,rotate=180]
\draw[] (0,3)node{(d)};
\draw[] (0,5)node{ };
\end{tikzpicture}
\qquad
\begin{tikzpicture}[yscale=.45,xscale=.6,rotate=180]
\draw[blue, thick] (-1.1,.7)node{$\sigma$} (.6,.7)node{$\tau$};
\draw[blue, thick,>=stealth] (-.5,-.5)edge[bend right=10,->-=.5,>=stealth](-1.5,2); 
\draw[blue, thick,>=stealth] (.5,-.5)edge[bend right=10,->-=.5,>=stealth](0,1.5);
	\draw[thick](-2,0)node{$\bullet$}to(2,0)node{$\bullet$}to(0,4)node{$\bullet$}to(-2,0)
	(0,1.5)node[white]{$\bullet$}node[red]{$\circ$};
\end{tikzpicture}
\qquad
\begin{tikzpicture}[yscale=.45,xscale=.6,rotate=180]
\draw[blue, thick] (-1.1,1)node{$\sigma$} (.6,.7)node{$\tau$};
\draw[blue, thick,>=stealth] (-.8,-.5)edge[bend left,->-=.5,>=stealth](.7,3.7);    
\draw[blue, thick,>=stealth] (.5,-.5)edge[bend right=10,->-=.5,>=stealth](0,1.5);
	\draw[thick](-2,0)node{$\bullet$}to(2,0)node{$\bullet$}to(0,4)node{$\bullet$}to(-2,0)
	(0,1.5)node[white]{$\bullet$}node[red]{$\circ$};
\end{tikzpicture}
\qquad
\begin{tikzpicture}[yscale=.45,xscale=.6,rotate=180]
\draw[blue, thick] (-1,.7)node{$\sigma$} (.6,.7)node{$\tau$};
\draw[blue, thick,->-=.3,>=stealth] (-1,-.5).. controls +(80:4) and +(90:3.5) ..(1,-.5); 
\draw[blue, thick,>=stealth] (.5,-.5)edge[bend right=10,->-=.5,>=stealth](0,1.5);
	\draw[thick](-2,0)node{$\bullet$}to(2,0)node{$\bullet$}to(0,4)node{$\bullet$}to(-2,0)
	(0,1.5)node[white]{$\bullet$}node[red]{$\circ$};
\end{tikzpicture}
	
\begin{tikzpicture}[yscale=.6,xscale=.8,rotate=180]
\draw[] (0,3)node{(e)};
\draw[] (0,5)node{ };
\end{tikzpicture}
\qquad
\begin{tikzpicture}[yscale=.45,xscale=-.6,rotate=180]
\draw[blue, thick] (-1.1,.7)node{$\tau$} (.6,.7)node{$\sigma$};
\draw[blue, thick,>=stealth] (-.5,-.5)edge[bend right=10,->-=.5,>=stealth](-1.5,2); 
\draw[blue, thick,>=stealth] (.5,-.5)edge[bend right=10,->-=.5,>=stealth](0,1.5);
	\draw[thick](-2,0)node{$\bullet$}to(2,0)node{$\bullet$}to(0,4)node{$\bullet$}to(-2,0)
	(0,1.5)node[white]{$\bullet$}node[red]{$\circ$};
\end{tikzpicture}
\qquad
\begin{tikzpicture}[yscale=.45,xscale=-.6,rotate=180]
\draw[blue, thick] (-1.1,1)node{$\tau$} (.6,.7)node{$\sigma$};
\draw[blue, thick,>=stealth] (-.8,-.5)edge[bend left,->-=.5,>=stealth](.7,3.7);    
\draw[blue, thick,>=stealth] (.5,-.5)edge[bend right=10,->-=.5,>=stealth](0,1.5);
	\draw[thick](-2,0)node{$\bullet$}to(2,0)node{$\bullet$}to(0,4)node{$\bullet$}to(-2,0)
	(0,1.5)node[white]{$\bullet$}node[red]{$\circ$};
\end{tikzpicture}
\qquad
\begin{tikzpicture}[yscale=.45,xscale=-.6,rotate=180]
\draw[blue, thick] (-1,.7)node{$\tau$} (.6,.7)node{$\sigma$};
\draw[blue, thick,->-=.3,>=stealth] (-1,-.5).. controls +(80:4) and +(90:3.5)
    ..(1,-.5); 
\draw[blue, thick,>=stealth] (.5,-.5)edge[bend right=10,->-=.5,>=stealth](0,1.5);
	\draw[thick](-2,0)node{$\bullet$}to(2,0)node{$\bullet$}to(0,4)node{$\bullet$}to(-2,0)
	(0,1.5)node[white]{$\bullet$}node[red]{$\circ$};
\end{tikzpicture}
	
\begin{tikzpicture}[yscale=.6,xscale=.8,rotate=180]
\draw[] (0,3)node{(f)};
\draw[] (0,5)node{ };
\end{tikzpicture}
\qquad
\begin{tikzpicture}[yscale=.45,xscale=-.6,rotate=180]
\draw[blue, thick] (-.8,.7)node{$\tau$} (.7,.7)node{$\sigma$};
\draw[blue, thick,>=stealth] (-.5,-.5)edge[bend left=10,->-=.5,>=stealth](0,1.5);
\draw[blue, thick,>=stealth] (.5,-.5)edge[bend right=10,->-=.5,>=stealth](0,1.5);
	\draw[thick](-2,0)node{$\bullet$}to(2,0)node{$\bullet$}to(0,4)node{$\bullet$}to(-2,0)
	(0,1.5)node[white]{$\bullet$}node[red]{$\circ$};
\end{tikzpicture}
\qquad

\caption{Relative positions of $\sigma(s-1,s)$ and $\tau(s-1,s)$ in the triangle $\Lambda_s$}
\label{fig:lastcase}
\end{figure}
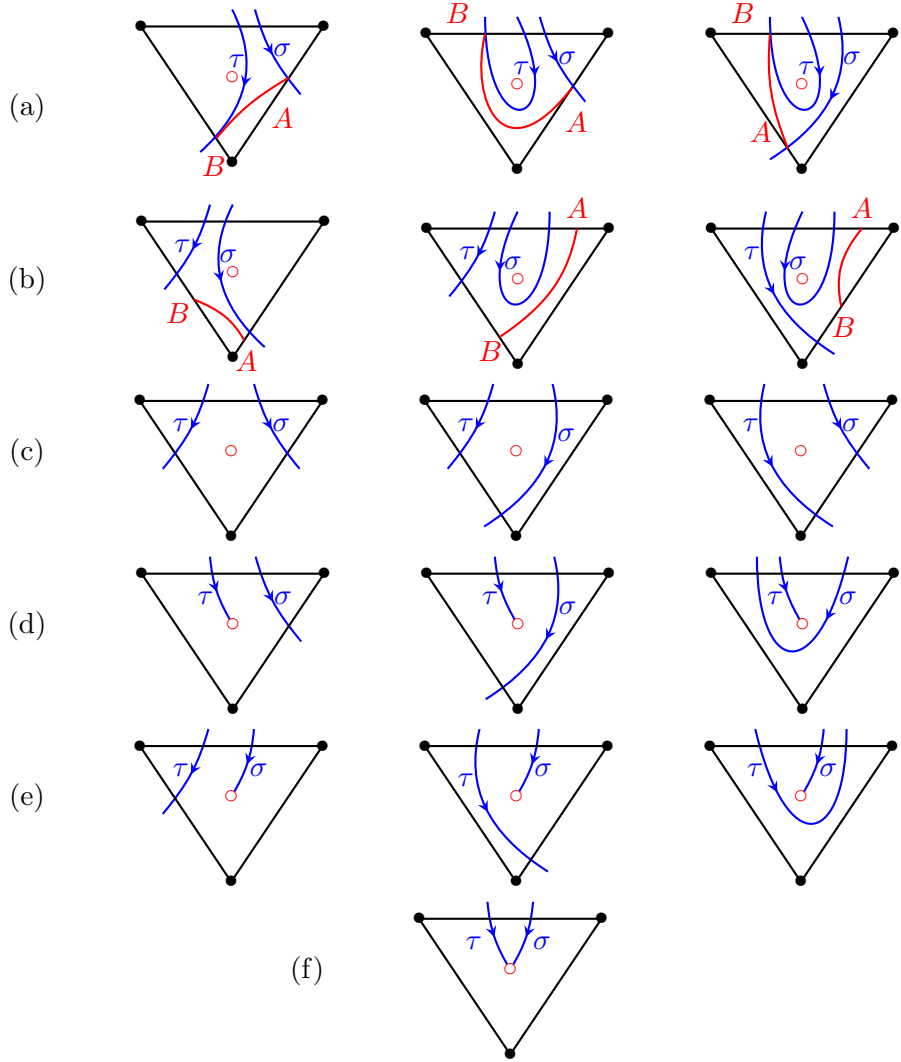

\begin{lemma}
$\varphi(\sigma,\tau)$ is in $Z^0 \hom(X_\sigma,X_\tau[\upsilon])$.
\end{lemma}

\begin{proof}
It suffices to prove that the components of $\varphi(\sigma,\tau)$ commute with the differentials of $X_\sigma$ and $X_\tau$. Since $\varphi_s$ is not from the same triangle as $\pi_{a_{s+1}}$ or $\pi_{b_{s+1}}$, their compositions (if exist) are zero. Hence we only need to prove that when $s>0$, $\varphi_s$ commutes with $\pi_{a_s}$ and $\pi_{b_s}$ in a suitable way. Consider the cases for $\Lambda_s$ in Figure~\ref{fig:lastcase}.
\begin{description}
\item[Cases in (a)]  $\epsilon(a_{s})=\epsilon(b_{s})=+$ and $\pi_{b_{s}}=\varphi_s\pi_{a_{s}}$, so $\varphi(\sigma,\tau)\in Z^0\hom(X_\sigma,X_\tau)$;
\item[Cases in (b)] $\epsilon(a_{s})=\epsilon(b_{s})=-$ and $\pi_{a_{s}}=\pi_{b_{s}}\varphi_s$, so $\varphi(\sigma,\tau)\in Z^0\hom(X_\sigma,X_\tau)$;
\item[Cases in (c), (d), (e) or (f)] $\epsilon(\pi_{a_{s}})=+, \epsilon(\pi_{b_{s}})=-$ (if exist) and $\varphi_s$ does not exist, so $\varphi(\sigma,\tau)\in Z^0\hom(X_\sigma,X_\tau)$.
\end{description}
\end{proof}

\begin{lemma}
$\varphi(\sigma,\tau)$ is not null-homotopic.
\end{lemma}

\begin{proof}
For the first case in Figure~\ref{fig:3cases}, the identities in the from \eqref{eq:form2} do not factor through $\pi_\alpha$ for any graded arrow $\alpha$ in $\overline{Q}$. Hence $\varphi(\sigma,\tau)$ is not null-homotopic.

For the second and third cases in Figure~\ref{fig:3cases}, since $\varphi_0$ in the from \eqref{eq:form} is of degree 1 or 2 and is not from the same triangle as $a_1$ or $b_1$, it does not factor through $\pi_{a_1}$ or $\pi_{b_1}$. Hence $\varphi(\sigma,\tau)$ is not null-homotopic.

Assume that $\varphi(\sigma,\tau)$ is null-homotopic in the last case in Figure~\ref{fig:3cases}. Then there exist morphisms $\psi_{u,v}:S_{k_u}\to S_{j_v}[\kappa_v+2]$ such that $\varphi_0=\psi_{2,1}\circ\pi_{a_1}+\pi_{b_{1}}\circ\psi_{1,2}$ and $\pi_{a_i}\circ\psi_{i+1,i}+\psi_{i,i+1}\circ\pi_{b_{i}}+\psi_{i+2,i+1}\circ\pi_{a_{i+1}}+\pi_{b_{i+1}}\circ\psi_{i+1,i+2}=0$ for $i\geq1$. Let $t$ be the maximal integer such that $a_i=b_i$ for $i< t$. Since $\deg\varphi_0=3$, by Lemma~\ref{lem:comp} repeatedly, the morphisms $\pi_{a_i}\circ\psi_{i+1,i}+\psi_{i,i+1}\circ\pi_{b_{i}}$ are also nonzero and of degree 3, for $i< t$. Note that $a_t$ and $b_t$ are from the triangle $\Lambda_t$. All possible cases for $\Lambda_t$ are shown in Figure~\ref{fig:lastcase}, where $s$ should be replaced by $t$ and $\sigma,\tau$ should be switched each other. It is checked case by case in the following that there is a contradiction. Hence $\varphi(\sigma,\tau)$ is null-homotopic.
\begin{description}
  \item[\textbf{Cases in (a) or (b)}]
  We have $\epsilon(\pi_{a_{t}})=\epsilon(\pi_{b_{t}})$ but $\deg\pi_{a_{t}}\neq\deg\pi_{a_{t}}$. By Lemma~\ref{lem:comp}, we have that $\pi_{a_t}\circ\psi_{t+1,t}+\psi_{t,t+1}\circ\pi_{b_{t}}$ is nonzero and of degree less than 3. Since $a_{t+1}$ and $b_{t+1}$ are not from $\Lambda_t$, by Lemma~\ref{lem:comp}, there are no $\psi_{t+2,t+1}$ and $\psi_{t+1,t+2}$ satisfying $\pi_{a_t}\circ\psi_{t+1,t}+\psi_{t,t+1}\circ\pi_{b_{t}}+\psi_{t+2,t+1}\circ\pi_{a_{t+1}}+\pi_{b_{t+1}}\circ\psi_{t+1,t+2}=0$. This is a contradiction.
 \item[\textbf{Cases in (c), (d), (e) or (f)}] We have $\epsilon(\pi_{a_t})=-$ and $\epsilon(\pi_{b_t})=+$, if exist.
       Then $\pi_{a_{t-1}}\circ\psi_{t,t-1}+\psi_{t-1,t}\circ\pi_{b_{t-1}}$  does not factor through $\pi_{a_t}$ or $\pi_{b_t}$ because of the directions of the maps. This is a contradiction.
\end{description}
\end{proof}

Combining the above two lemmas, we have the following result.

\begin{proposition}\label{defpp}
Let $\sigma,\tau$ be oriented general closed arcs in $\surfo$ with $\Int_{\surf-\Tri}(\sigma,\tau)=0$ and whose starting points coincide. Then (the homotopy class of) $\varphi(\sigma,\tau)$ is a non-zero morphism in $\Hom_{\per\EE_\T}(X_\sigma,X_\tau[\upsilon])$.
\end{proposition}

In particular, $\varphi(\sigma,\tau)$ can be regarded as a morphism in $\Hom_{\per\EE_\T/[1]}(\widetilde{X}(\sigma),\widetilde{X}(\tau))$.

\begin{corollary}\label{cor:comp}
Let $\sigma_1,\sigma_2,\sigma_3$ be oriented general closed arcs in $\surfo$ with $\Int_{\surf-\Tri}(\sigma_i,\sigma_j)=0$ for any $i,j$ and which share the same starting point.
If the start segments of $\sigma_1,\sigma_2$ and $\sigma_3$ are in clockwise order at the starting point, then
\begin{eqnarray}\label{eq:neq0}
\varphi(\sigma_2,\sigma_{3})\circ\varphi(\sigma_{1},\sigma_{2})=\varphi(\sigma_{1},\sigma_{3}).
\end{eqnarray}
\end{corollary}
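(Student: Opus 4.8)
The plan is to reduce the identity $\varphi(\sigma_2,\sigma_3)\circ\varphi(\sigma_1,\sigma_2)=\varphi(\sigma_1,\sigma_3)$ to a purely local computation inside the triangles where the three arcs branch apart, using the explicit description of the morphisms $\varphi(-,-)$ from Construction~\ref{cons:above}. First I would set up notation: let $s_{12}$, $s_{13}$, $s_{23}$ be the branching indices of the pairs $(\sigma_1,\sigma_2)$, $(\sigma_1,\sigma_3)$, $(\sigma_2,\sigma_3)$ in the sense of Construction~\ref{cons:above}, i.e. the maximal length of common initial string. Since the start segments of $\sigma_1,\sigma_2,\sigma_3$ are in clockwise order at the shared starting point, $\sigma_2$ lies ``between'' $\sigma_1$ and $\sigma_3$, and this forces $s_{13}=\min(s_{12},s_{23})$ and, more importantly, that on the common initial stretch all three strings literally agree. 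I would first dispose of the case where one of the morphisms is of ``$\varphi_0$ type'' (one of the first three cases of Figure~\ref{fig:3cases}): there $\varphi$ has a single component $\pi_{c_0}$ at the very first vertex $k_0$, built from the arrow of the triangle $\Lambda_0$, and the composition identity becomes the local relation $\pi_{c_0''}\pi_{c_0'}=\pi_{c_0}$ (or the spherical relation $\pi_{t}$ when two of the arcs coincide), which is exactly case (1) or case (2) of Lemma~\ref{lem:comp} applied to the 3-cycle of $\Lambda_0$, respectively the relation $\pi_a\pi_{a^\ast}=\pi_{t_i}$.

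Next I would treat the generic case where all three morphisms are of ``$\varphi_s$ type'' (the last case of Figure~\ref{fig:3cases}), so each is represented by a string of identities followed by a single $\pi_{c_s}$ (or just identities, the $\emptyset$-subcase). Here the composite $\varphi(\sigma_2,\sigma_3)\circ\varphi(\sigma_1,\sigma_2)$ is computed componentwise: on the initial stretch of length $s_{13}$ it is a composite of identities, hence an identity; at the branching vertex it is the composite of whatever $\pi_{c}$'s (or identities) appear, and I must check this equals the single component of $\varphi(\sigma_1,\sigma_3)$. The clockwise ordering guarantees that $\sigma_1$ and $\sigma_3$ split off from $\sigma_2$ at comparable places, so at most one nontrivial $\pi_{c_s}$ meets a nontrivial part of the other morphism, and the remaining factor is an identity — making the composite equal to that single $\pi_{c_s}$, which is precisely the component of $\varphi(\sigma_1,\sigma_3)$ dictated by the triangle where $\sigma_1$ and $\sigma_3$ diverge. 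The bookkeeping of shifts is forced: all three $\varphi$'s have a well-defined cohomological degree by Definition-Proposition~\ref{defpp}, and since both sides are nonzero (Lemma~\ref{lem:s}, Lemma~\ref{lem:s=0}) and $\per\EE_\T$ is Krull--Schmidt with one-dimensional relevant $\Hom$-spaces, a nonzero composite of two basis morphisms is a scalar multiple of the basis morphism on the target; rescaling the $\varphi$'s compatibly (or tracking the scalars through Lemma~\ref{lem:comp}, whose relations have coefficients $\pm1$) yields equality on the nose.

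The main obstacle I anticipate is the case analysis at the branching vertex when the clockwise order of the three start segments interacts with the various subcases of Figure~\ref{fig:lastcase}: one must verify that for every admissible local picture, the arrow $c_s^{(13)}$ associated to the $(\sigma_1,\sigma_3)$-divergence factors as $c_s^{(23)}\cdot c_s^{(12)}$ inside a single triangle (a 3-cycle relation of Lemma~\ref{lem:comp}(1)) — and in particular that the ``wrong'' configurations, where this would fail, are exactly the ones excluded by the clockwise hypothesis. A clean way to organize this is to argue geometrically: the three arcs together with the triangulation cut out a small polygon near the shared endpoint, and the clockwise order means the three arrows $c_s^{(12)}$, $c_s^{(23)}$, $c_s^{(13)}$ are three consecutive edges of triangles forming a ``fan'', so the relation is forced by reading off the quiver $Q_\T$ from that fan. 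I would present the degree/scalar normalization as a short lemma-free remark (invoking $\dim\Hom\le 1$ from the string model) rather than recomputing it, and spend the bulk of the argument on the geometric fan picture.
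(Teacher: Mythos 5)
Your overall route is the same as the paper's: a case analysis of the local configuration where the arcs diverge, using the explicit shape of the morphisms from Construction~\ref{cons:above} and the multiplication rules of Lemma~\ref{lem:comp} (the paper's Figure~\ref{fig:C} is exactly your ``fan'' of essential cases), so the geometric core of your plan is sound, including the observation that $s_{13}=\min(s_{12},s_{23})$ (correct, because while $\sigma_1,\sigma_3$ share isotopic segments the strip between them contains no decorating point, so $\sigma_2$ is trapped there). The one step that does not work as written is your normalization of degrees and scalars. You cannot invoke ``one-dimensional relevant $\Hom$-spaces'': that is essentially Theorem~\ref{thm:1}/Corollary~\ref{cor:int}, which is proved \emph{later} using this very corollary, so the appeal is circular, and it is false in general anyway (two closed arcs sharing several decorating points have higher-dimensional $\Hom$). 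Nor is there any freedom to ``rescale the $\varphi$'s compatibly'': Construction~\ref{cons:above} and Definition-Proposition~\ref{defpp} pin each $\varphi(\sigma,\tau)$ down with components that are literally identities and $\pi_c$'s, so the asserted identity must hold on the nose, and the sign issue is real --- Lemma~\ref{lem:comp}(2) contains the relation $\pi_{b^\ast}\pi_b=-\pi_{t_i}$, and part of what the clockwise hypothesis does is exclude exactly the configurations where this $-1$ would appear.

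The clean way to close the argument (and what the paper does) is to avoid both the dimension count and the explicit subcase-matching of $\varphi_{s,\pi}$ versus $\varphi_{s,\emptyset}$: check only that the composite $\varphi(\sigma_2,\sigma_3)\circ\varphi(\sigma_1,\sigma_2)$ is \emph{a morphism of the class constructed in Construction~\ref{cons:above}} for the pair $(\sigma_1,\sigma_3)$ in each case of Figure~\ref{fig:C}; since it is automatically a cocycle and is non-zero by Lemmas~\ref{lem:s} and~\ref{lem:s=0}, Definition-Proposition~\ref{defpp} (uniqueness of the non-zero member of that class) forces it to equal $\varphi(\sigma_1,\sigma_3)$, with no further scalar discussion. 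If you prefer your componentwise matching, you must actually carry out the $\pm1$ bookkeeping you defer. A smaller slip: your claim that ``at most one nontrivial $\pi_{c_s}$ meets a nontrivial part of the other morphism'' fails precisely when $s_{12}=s_{23}=s_{13}$, where both morphisms contribute a $\pi_c$ at the same triangle and the 3-cycle relation of Lemma~\ref{lem:comp}(1) (or the $\pi_a\pi_{a^\ast}=\pi_{t}$ relation when $\sigma_1,\sigma_3$ leave through the same edge) is needed; you do treat this case in your last paragraph, so it is an internal inconsistency rather than a missing idea, but the write-up should be reorganized accordingly.
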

\begin{figure}[ht]\centering
\quad\quad\begin{tikzpicture}[yscale=.4,xscale=.5,rotate=0]
\draw[red,thick](-.5,-.8)node[below,black]{$\sigma_2$}to[bend right=7](0,1.5)
(.5,-.8)node[below,black]{$\sigma_1$}to[bend right=7](0,1.5)
(-1.5,-.8)node[below,black]{$\sigma_3$}to[bend right=7](0,1.5);
\draw[NavyBlue, thick](-2,0)node{$\bullet$}to(2,0)node{$\bullet$}to(0,4)node{$\bullet$}to(-2,0)
    (0,1.5)node[white]{$\bullet$}node[red]{$\circ$};
\end{tikzpicture}
\quad
\begin{tikzpicture}[yscale=.4,xscale=.5,rotate=180]
\draw[red,thick](-2,2.5)node[right,black]{$\sigma_1$}to[bend right=7](0,1.5)
(2,2.5)node[left,black]{$\sigma_3$}to[bend right=7](0,1.5)
(1.5,3.5)node[left,black]{$\sigma_2$}to[bend right=7](0,1.5);
\draw[NavyBlue, thick](-2,0)node{$\bullet$}to(2,0)node{$\bullet$}to(0,4)node{$\bullet$}to(-2,0)
    (0,1.5)node[white]{$\bullet$}node[red]{$\circ$} (0,2.5)node[below,black]{$ $};
\end{tikzpicture}
\quad
\begin{tikzpicture}[yscale=.4,xscale=.5,rotate=0]
\draw[red,thick](-2,2.5)node[left,black]{$\sigma_3$}to[bend right=7](0,1.5)
(2,2.5)node[right,black]{$\sigma_1$}to[bend right=7](0,1.5)
(-2.5,1)node[left,black]{$\sigma_2$}to[bend right=7](0,1.5);
\draw[NavyBlue, thick](-2,0)node{$\bullet$}to(2,0)node{$\bullet$}to(0,4)node{$\bullet$}to(-2,0)
    (0,1.5)node[white]{$\bullet$}node[red]{$\circ$};
\end{tikzpicture}

\begin{tikzpicture}[yscale=.4,xscale=.5,rotate=0]
\draw[red,thick](-2,2.5)node[left,black]{$\sigma_3$}to[bend right=7](0,1.5)
(2,2.5)node[right,black]{$\sigma_1$}to[bend right=7](0,1.5)
(0,-1.5)node[right,black]{$\sigma_2$}to[bend right=7](0,1.5);
\draw[NavyBlue, thick](-2,0)node{$\bullet$}to(2,0)node{$\bullet$}to(0,4)node{$\bullet$}to(-2,0)
    (0,1.5)node[white]{$\bullet$}node[red]{$\circ$};
\end{tikzpicture}
\quad
\begin{tikzpicture}[yscale=.4,xscale=.5,rotate=180]
\draw[red,thick](-.5,-.8)node[above,black]{$\sigma_1$}to[bend right=7](0,1.5)
(.5,-.8)node[above,black]{$\sigma_3$}to[bend right=7](0,1.5)
(1.5,-.8)node[above,black]{$\sigma_2$}to[bend right=7](0,1.5);
\draw[NavyBlue, thick](-2,0)node{$\bullet$}to(2,0)node{$\bullet$}to(0,4)node{$\bullet$}to(-2,0)
    (0,1.5)node[white]{$\bullet$}node[red]{$\circ$};
\end{tikzpicture}
\quad\quad\quad
\begin{tikzpicture}[yscale=.4,xscale=.5,rotate=180]
\draw[red,thick](-.5,-.8)node[above,black]{$\sigma_1$}to[bend right=7](0,1.5)
(.5,-.8)node[above,black]{$\sigma_3$}to[bend right=7](0,1.5)
(2,2)node[left,black]{$\sigma_2$}to[bend right=7](0,1.5);
\draw[NavyBlue, thick](-2,0)node{$\bullet$}to(2,0)node{$\bullet$}to(0,4)node{$\bullet$}to(-2,0)
    (0,1.5)node[white]{$\bullet$}node[red]{$\circ$};
\end{tikzpicture}
\caption{The relative position of $\sigma_i$}
\label{fig:C}
\end{figure}
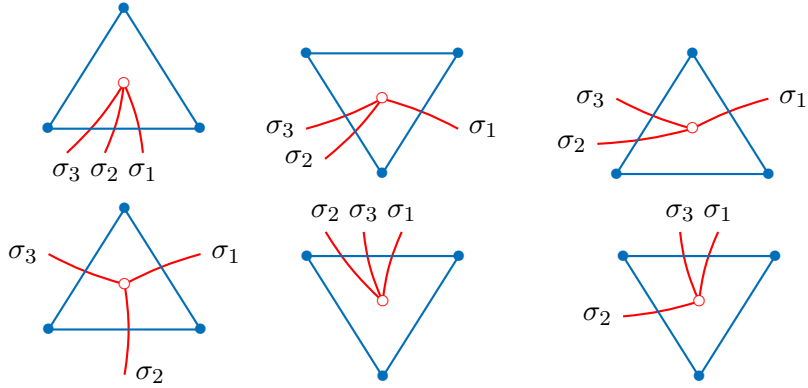
\begin{proof}
Consider the relative position of the first segments of $\sigma_i$. See Figure~\ref{fig:C} for all essential cases (up to mirror).
Then it is straightforward to check that $\varphi(\sigma_2,\sigma_3)\circ\varphi(\sigma_1,\sigma_2)$
is of the type in Construction~\ref{cons:above}. Hence we are done.
\end{proof}

\subsection{The induced Triangles}

Throughout this subsection, let $\sigma,\tau$ be oriented general closed arcs in $\surfo$ with $\Int_{\surf-\Tri}(\sigma,\tau)=0$. Suppose that $\sigma$ and $\tau$ share the same starting point and do not coincide in $\GCA(\surfo)$.

\begin{definition}
The \emph{(positive) extension} $\tau\wedge\sigma$ of $\tau$ by $\sigma$ (w.r.t. the common starting point) is defined in Figure~\ref{fig:htwist}.

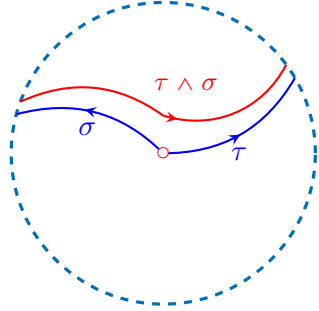
\begin{figure}[ht]\centering
\begin{tikzpicture}[scale=1]
\draw[NavyBlue,dashed,very thick](0,0)circle(2);
\draw[thick,blue](-195:2)edge[bend left,-<-=.5,>=stealth](0,0)
(0,0)edge[bend right,->-=.5,>=stealth](30:2)
    (1,0)node{$\tau$}(-1,.1)node[above]{$\sigma$};
\draw[thick,red](160:2)to[bend left](0,.5)(0.3,.7)
    node[above]{\small{$\tau\wedge\sigma$}};
\draw[thick,red](0,0.5)edge[bend right=40,->-=.1,>=stealth](36:2);
\draw(0,0)node[white] {$\bullet$} node[red](a){$\circ$};
\end{tikzpicture}
\caption{The extension}
\label{fig:htwist}
\end{figure}
\end{definition}

\begin{proposition}\label{prop:key}
There exists a non-trivial triangle in $\per\EE_\T$, whose image in $\per\EE_\T/[1]$ is
\[\widetilde{X}(\tau\wedge\sigma)\xrightarrow{\varphi(\tau\wedge\sigma,\overline{\sigma})}
\widetilde{X}(\sigma)\xrightarrow{\varphi(\sigma,\tau)}\widetilde{X}(\tau)\xrightarrow{\varphi(\overline{\tau},\overline{\tau\wedge\sigma})} \widetilde{X}(\tau\wedge\sigma).\]
\end{proposition}

\begin{proof}

Keep the notations for $\sigma$ and $\tau$ in the previous subsection. Using homological algebra, the mapping cone of $\varphi(\sigma,\tau)$ is the dg $\EE_\T$-module associated to the string arising from ${\tau\wedge\sigma}$. Hence we have the required triangle.
\end{proof}

\end{document}